\theoremstyle{definition}
 \newtheorem{example}{\protect\examplename}
\theoremstyle{plain}
\newtheorem{thm}{\protect\theoremname}
\theoremstyle{plain}
\newtheorem{cor}{\protect\corollaryname}
\theoremstyle{remark}
\newtheorem{rem}{\protect\remarkname}
\theoremstyle{plain}
\newtheorem{lem}{\protect\lemmaname}
\date{November 6, 2019}
\providecommand{\corollaryname}{Corollary}
\providecommand{\examplename}{Example}
\providecommand{\lemmaname}{Lemma}
\providecommand{\remarkname}{Remark}
\providecommand{\theoremname}{Theorem}
\begin{document}
\global\long\def\Acal{\mathcal{A}}%

\global\long\def\tA{\tilde{A}}%

\global\long\def\hA{\hat{A}}%

\global\long\def\Bcal{\mathcal{B}}%

\global\long\def\Ccal{\mathcal{C}}%

\global\long\def\Dcal{\mathcal{D}}%

\global\long\def\hD{\hat{D}}%

\global\long\def\EE{\mathbb{E}}%

\global\long\def\te{\tilde{e}}%

\global\long\def\hF{\hat{F}}%

\global\long\def\tF{\tilde{F}}%

\global\long\def\bF{\bar{F}}%

\global\long\def\bG{\bar{G}}%

\global\long\def\hG{\hat{G}}%

\global\long\def\tG{\tilde{G}}%

\global\long\def\Gcal{\mathcal{G}}%

\global\long\def\Hcal{\mathcal{H}}%

\global\long\def\tH{\tilde{H}}%

\global\long\def\hH{\hat{H}}%

\global\long\def\Ical{\mathcal{I}}%

\global\long\def\CIcal{\mathcal{CI}}%

\global\long\def\hL{\hat{L}}%

\global\long\def\Lcal{\mathcal{L}}%

\global\long\def\Mcal{\mathcal{M}}%

\global\long\def\tM{\tilde{M}}%

\global\long\def\bM{\bar{M}}%

\global\long\def\hM{\hat{M}}%

\global\long\def\PP{\mathbb{P}}%

\global\long\def\Qcal{\mathcal{Q}}%

\global\long\def\hr{\hat{r}}%

\global\long\def\tR{\tilde{R}}%

\global\long\def\bR{\bar{R}}%

\global\long\def\RR{\mathbb{R}}%

\global\long\def\SS{\mathbb{S}}%

\global\long\def\Tcal{\mathcal{T}}%

\global\long\def\hU{\hat{U}}%

\global\long\def\tu{\tilde{u}}%

\global\long\def\hV{\hat{V}}%

\global\long\def\bW{\bar{W}}%

\global\long\def\tW{\tilde{W}}%

\global\long\def\xb{\mathbf{x}}%

\global\long\def\tY{\tilde{Y}}%

\global\long\def\yb{\mathbf{y}}%

\global\long\def\tZ{\tilde{Z}}%

\global\long\def\zb{\mathbf{z}}%

\global\long\def\agmin{\arg\min}%

\global\long\def\halpha{\hat{\alpha}}%

\global\long\def\balpha{\bar{\alpha}}%

\global\long\def\hbeta{\hat{\beta}}%

\global\long\def\tbeta{\tilde{\beta}}%

\global\long\def\bbeta{\bar{\beta}}%

\global\long\def\LSbeta{\hat{\beta}_{{\rm LS}}}%

\global\long\def\teps{\tilde{\varepsilon}}%

\global\long\def\heps{\hat{\varepsilon}}%

\global\long\def\hpi{\hat{\pi}}%

\global\long\def\hxi{\hat{\xi}}%

\global\long\def\txi{\tilde{\xi}}%

\global\long\def\tDelta{\tilde{\Delta}}%

\global\long\def\tdelta{\tilde{\delta}}%

\global\long\def\hdelta{\hat{\delta}}%

\global\long\def\hLambda{\hat{\Lambda}}%

\global\long\def\blambda{\bar{\lambda}}%

\global\long\def\tlambda{\tilde{\lambda}}%

\global\long\def\bf{\bar{f}}%

\global\long\def\tf{\tilde{f}}%

\global\long\def\hgamma{\hat{\gamma}}%

\global\long\def\hGamma{\hat{\Gamma}}%

\global\long\def\tGamma{\tilde{\Gamma}}%

\global\long\def\bGamma{\bar{\Gamma}}%

\global\long\def\hSigma{\hat{\Sigma}}%

\global\long\def\bSigma{\bar{\Sigma}}%

\global\long\def\tSigma{\tilde{\Sigma}}%

\global\long\def\hsigma{\hat{\sigma}}%

\global\long\def\tsigma{\tilde{\sigma}}%

\global\long\def\hTheta{\hat{\Theta}}%

\global\long\def\dTheta{\dot{\Theta}}%

\global\long\def\bTheta{\bar{\Theta}}%

\global\long\def\cTheta{\check{\Theta}}%

\global\long\def\rTheta{\mathring{\Theta}}%

\global\long\def\hPsi{\hat{\Psi}}%

\global\long\def\tXi{\tilde{\Xi}}%

\global\long\def\lnorm{\left\Vert \right\Vert }%

\global\long\def\abs{\left|\right|}%

\global\long\def\htheta{\hat{\theta}}%

\global\long\def\dv{\dot{v}}%

\global\long\def\rank{{\rm rank}\,}%

\global\long\def\trace{{\rm trace\,}}%

\global\long\def\boldone{\mathbf{1}}%

\global\long\def\hmu{\hat{\mu}}%

\global\long\def\hA{\hat{A}}%

\global\long\def\hq{\hat{q}}%

\global\long\def\Fcal{\mathcal{F}}%

\global\long\def\tmu{\tilde{\mu}}%

\global\long\def\tX{\tilde{X}}%

\global\long\def\bX{\bar{X}}%

\global\long\def\htau{\hat{\tau}}%

\global\long\def\ttau{\tilde{\tau}}%

\global\long\def\btau{\bar{\tau}}%

\global\long\def\eig{{\rm eig}}%

\global\long\def\tq{\tilde{q}}%

\global\long\def\KL{{\rm KL}}%

\global\long\def\sigeps{\sigma_{\varepsilon}}%

\global\long\def\sigxi{\sigma_{\xi}}%

\global\long\def\btheta{\bar{\theta}}%

\global\long\def\bQ{\overline{Q}}%

\global\long\def\bsigeps{\bar{\sigma}_{\varepsilon}}%

\global\long\def\bsigxi{\bar{\sigma}_{\xi}}%

\global\long\def\ttheta{\tilde{\theta}}%

\global\long\def\Mcalnull{\Mcal^{{\rm null}}}%

\global\long\def\tQ{\widetilde{Q}}%

\global\long\def\tsigeps{\tilde{\sigma}_{\varepsilon}}%

\global\long\def\tsigxi{\tilde{\sigma}_{\xi}}%

\global\long\def\oneb{\mathbf{1}}%

\global\long\def\vector{{\rm vec\ }}%

\global\long\def\texti{\text{(i)}}%

\global\long\def\textii{\text{(ii)}}%

\global\long\def\textiii{\text{(iii)}}%

\global\long\def\textiv{\text{(iv)}}%

\global\long\def\lessp{\lesssim_{P}}%

\global\long\def\dotbracket{(\cdot)}%

\title{How well can we learn large factor models without assuming strong
factors?\thanks{First version October 23, 2019.}}
\author{Yinchu Zhu\thanks{Email: yzhu6@uoregon.edu}\textit{}\\
\textit{University of Oregon}}
\maketitle
\begin{abstract}
In this paper, we consider the problem of learning models with a latent
factor structure. The focus is to find what is possible and what is
impossible if the usual strong factor condition is not imposed. We
study the minimax rate and adaptivity issues in two problems: pure
factor models and panel regression with interactive fixed effects.
For pure factor models, if the number of factors is known, we develop
adaptive estimation and inference procedures that attain the minimax
rate. However, when the number of factors is not specified a priori,
we show that there is a tradeoff between validity and efficiency:
any confidence interval that has uniform validity for arbitrary factor
strength has to be conservative; in particular its width is bounded
away from zero even when the factors are strong. Conversely, any data-driven
confidence interval that does not require as an input the exact number
of factors (including weak ones) and has shrinking width under strong
factors does not have uniform coverage and the worst-case coverage
probability is at most 1/2. For panel regressions with interactive
fixed effects, the tradeoff is much better. We find that the minimax
rate for learning the regression coefficient does not depend on the
factor strength and propose a simple estimator that achieves this
rate. However, when weak factors are allowed, uncertainty in the number
of factors can cause a great loss of efficiency although the rate
is not affected. In most cases, we find that the strong factor condition
(and/or exact knowledge of number of factors) improves efficiency,
but this condition needs to be imposed by faith and cannot be verified
in data for inference purposes.
\end{abstract}

\section{Introduction}

Models with hidden factors have been a popular tool for analyzing
economic data. These models provide a convenient framework in describing
datasets that are big in both cross-sectional and time series dimensions.
The basic formulation is
\begin{equation}
X_{i,t}=M_{i,t}+u_{i,t},\label{eq: factor model}
\end{equation}
where $M\in\RR^{n\times T}$ is a low-rank matrix and $u_{i,t}\sim N(0,1)$
is i.i.d across $(i,t)$. The low rank property of $M$ is another
way of describing the factor structure: if $M=LF'$ with $L\in\RR^{n\times k}$
and $F\in\RR^{T\times k}$, then $\rank M\leq k$. For this reason,
we view factor models as a low-rank matrix plus an error matrix. Throughout
the paper, all the idiosyncratic terms are assumed to be i.i.d and
have a normal distribution.

Most of the results in the literature on factor models assume the
strong factor condition or spiked eigenvalue condition, see \citep{Bai2003a,fan2011high,wang2017asymptotics}
among many others. This condition states that the largest $k$ singular
values of $M$ grow at the rate $\sqrt{nT}$ or at least faster than
$\sqrt{n+T}$. Besides the obvious question of whether such an assumption
can be checked in the data, perhaps a more relevant question is whether
we can assess the factor strength well enough for the purpose of estimation
and inference. It is important to know how well we can possibly learn
the factor models when the factor strength is also learned from the
data. In this paper, we can answer this question in pure factor models
and panel regression with interactive fixed effects. We study the
problem of learning scalar (low-dimensional) components, e.g., entries
of large matrices or regression coefficients. 

The conceptual tools we use are minimax rates and adaptivity. We mainly
examine two issues.
\begin{itemize}
\item Minimax rates. What is minimax expected length of confidence intervals
for different levels of factor strengths? In particular, whether robust
confidence intervals (i.e., those with validity for arbitrary factor
strengths) and non-robust confidence intervals (i.e., those with validity
only under strong factors) have different rates overall. It turns
out that the answer depends on the problem (pure factor models or
panel regressions with interactive fixed effects). 
\item Adaptivity. In terms of efficiency, can robust confidence intervals
match non-robust confidence intervals when the factors are actually
strong? To study this problem, we consider the minimax optimal performance
of robust confidence intervals over a parameter space in which factors
are strong. In general, we find that robust confidence intervals have
much worse efficiency than non-robust confidence intervals when all
the factors are strong. Therefore, requiring robustness (i.e., validity
under arbitrary factor strengths) leads to efficiency loss even in
situations with only strong factors. This robustness-efficiency tradeoff
implies that in order to improve efficiency, the strong factor condition
(and/or exact knowledge of number of factors) needs to be imposed
by faith and is impossible to verify in data for inference purpose;
if we have to learn the factor strengths from the data, there is necessarily
efficiency loss. 
\end{itemize}
In a pure factor model, the task is to learn entries in $M$, say
$M_{1,1}$, where $X_{1,1}$ is potentially missing. We characterize
the impact of factor strength on estimation and inference. The main
findings are summarized as follows. 

When the number of factors is known, the minimax rate for estimation
of $M_{1,1}$ depends on the factor strength (singular values of $M$).
If the singular values of $M$ does not grow faster than $\sqrt{n+T}$,
then it is impossible to achieve consistency in estimating $M_{1,1}$.
Moreover, the rate of convergence for $M_{1,1}$ can be learned from
the data: one can construct confidence intervals that are valid for
arbitrary factor strength and automatically achieve the optimal rate
in their widths. 

When the number of factor is not given a priori, there is a tradeoff
between validity and efficiency. Any confidence interval that is valid
for arbitrary factor strength cannot have widths that shrink to zero
even when applied to strong factor settings. Conversely, if a confidence
interval learns the number of factors from the data and has widths
shrinking to zero when the all the factors are strong, then this confidence
interval cannot have uniform coverage for all factor strengths and
the worst-case coverage probability is below $1/2$. This tradeoff
only applies to inference. Achieving the minimax rate via a data-driven
estimator is entirely possible. The key insight is that although ignoring
weak factors does not reduce the rate in estimation, it is quite damaging
for inference valdity. 

In a panel regression with interactive fixed effects, the situation
is much better. The fixed effects in these models have a factor structure
and the task is to learn the regression coefficient. We show that
even if the exact number of factors is unknown, it is possible to
learn the regression coefficient at the rate $(nT)^{-1/2}$, regardless
of the factor strengths in the fixed effects. However, when factors
are allowed to be weak, uncertainty in the number of factors can cause
a great loss of efficiency. This is in drastic contrast with existing
results that under the strong factor condition, it is not important
to know the exact number of factors, see \citet{moon2014linear}. 

Our work is related to the literature on weak factors. Problems of
weak factors have been documented in simulations \citep[e.g., ][]{boivin2006more,bai2008large}
that study the performance of standard asymptotic theories. Theoretically,
the inconsistency of PCA (principal component analysis) has been pointed
out by \citet{johnstone2009consistency} and \citet{onatski2012asymptotics}
among others. \citet{onatski2012asymptotics} also derived detailed
asymptotic distribution of PCA when the factors are weak. Inconsistency
under weak factors is one of the main motivations driving developments
in the sparse PCA literature. There sparsity is imposed on factor
loadings to achieve consistent estimation of the factor structure,
see \citep{amini2008high,berthet2013optimal,cai2013sparse,birnbaum2013minimax}
among many others. It turns out that even when the factors are observed,
estimation and inference could encounter non-standard difficulties
if the factors are only weakly influential, \citep[e.g., ][]{kleibergen2009tests,gospodinov2017spurious,anatolyev1807.04094}.
The situation is more complicated if there are potentially missing
factors. We will now proceed to analysis of different models. Additional
discussion on related literature will be presented in Sections \ref{sec: SC}
and \ref{sec: panel reg} regarding pure factor models (with missing
values) and panel regressions, respectively. 

\textbf{Notations}. For a matrix $A$, $\sigma_{1}(A)\geq\sigma_{2}(A)\geq\cdots$
denote the singular values of $A$ in decreasing order. For matrix
$A$, $\|A\|_{\infty}=\max_{i,j}|A_{i,j}|$, $\|A\|=\sigma_{1}(A)$
denotes the spectral norm, $\|A\|_{F}$ denotes the Frobenius norm
and $\|A\|_{*}$ denotes the nuclear norm (sum of all singular values
of $A$). For a vector $a$, $\|a\|_{2}$ denotes the Euclidean norm.
In model (\ref{eq: factor model}), the distribution of the data is
indexed by $M$; the probability measure and expectation under $M$
are denoted by $\PP_{M}$ and $\EE_{M}$, respectively. For two positive
sequences $X_{n},Y_{n}$, we use $X_{n}\lesssim Y_{n}$ to denote
$X_{n}\leq CY_{n}$ for a constant. We use $X_{n}\gtrsim Y_{n}$ to
denote $Y_{n}\lesssim X_{n}$ and $X_{n}\asymp Y_{n}$ means that
$X_{n}\lesssim Y_{n}$ and $X_{n}\gtrsim Y_{n}$. We use $\oneb\{\cdot\}$
for the indicator function. For a matrix $A\in\RR^{n\times T}$, $A_{-1,-1}$
denotes the matrix $A$ with its (1,1) entry replaced by zero. For
a vector $a=(a_{1},...,a_{k})'$, we denote $a_{-1}=(a_{2},...,a_{k})'$. 

\section{\label{sec: SC}Entry-wise learning: missing data and synthetic control}

In this section, we work with the following the parameter space:
\[
\Mcal=\left\{ A\in\RR^{n\times T}:\ \|A\|_{\infty}\leq\kappa\quad\text{and}\quad\rank A\leq2\right\} ,
\]
where $\kappa>0$ is a constant. The set $\Mcal$ corresponds to factor
models with at most two factors. We assume that entries of the factor
structure are bounded so the factor strength would be a meaningful
quantity. This is related to the incoherence condition; see \citet{Bai1910.06677}
for an excellent discussion on the relation between incoherence and
factor strength. Here, we impose bounded $\|A\|_{\infty}$ simply
to rule out the situations in which the entire factor structure concentrates
on a few entries. For $M\in\RR^{n\times T}$ with $M_{i,t}=nT\oneb\{(i,t)=(1,1)\}$,
the strong factor condition holds $\sigma_{1}(M)\asymp\sqrt{nT}$
but clearly the standard asymptotic theory for principal component
analysis (PCA) in \citet{Bai2003a} does not hold even if all entries
are observed; entries in $X_{-1,-1}$ obviously have no information
on $M_{1,1}$. 

We define the set of one-factor models with factor strength $\tau$:
\[
\Mcal(\tau)=\left\{ A\in\Mcal:\ \sigma_{1}(A)\geq\tau\quad\text{and}\quad\sigma_{2}(A)=0\right\} .
\]

Our analysis of the case with known number of factors will deal with
$\Mcal(\tau)$ and later discussion on unknown number of factors mainly
focuses on whether the number of factors is one or two. Restricting
the analysis to at most two factors is for simplicity and the results
can be extended to any fixed number of factors with extra (but perhaps
unnecessary) complications in notations and proofs. In this section,
we assume that the idiosyncratic terms are i.i.d standard normal variables
for simplicity.\footnote{The assumption of unit variance is not too restrictive since the quantity
$(nT)^{-1}\sum_{i=1}^{n}\sum_{t=1}^{T}u_{i,t}^{2}$ can always be
consistently estimated at the rate $\max\{n^{-1},T^{-1}\}$, regardless
of the factor strength. To see this, consider the PCA estimator $\hM$
with rank $\bar{k}$, assuming $\bar{k}\geq k$. Since $\|X-\hM\|\leq\|u\|$,
we have $\|\hM-M\|\leq2\|u\|$. Since $\rank(\hM-M)\leq2\bar{k}$,
we have $\|\hM-M\|_{F}\leq2\sqrt{2\bar{k}}\|u\|=O_{P}(\sqrt{n+T})$
due to results from random matrix theory.}

One example that involves missing data is related to the synthetic
control problem. This is a fast growing literature since the seminal
papers by \citep{abadie2003economic,abadie2015comparative} and has
attracted much attention in applied research. One of the leading models
for synthetic control is to use a factor model for the counterfactuals,
see \citep{abadie2015comparative,gobillon2016regional,li2017statistical,athey2017matrix,xu2017generalized,chernozhukov1712.09089,li2018inference}
among many others. We consider a simple formulation.
\begin{example}[Synthetic control]
\label{exa: SC}There are $n$ units observed over $T$ time periods.
The potential outcome without treatment is denoted by $Y_{i,t}^{N}$
for unit $i$ in time period $t$. We assume that $Y_{i,t}^{N}=L_{i}F_{t}+u_{i,t}$
and the treatment is allocated to the first unit in the last time
period. In other words, the observed data is $Y_{i,t}=Y_{i,t}^{N}+D_{i,t}\gamma$
for $1\leq i\leq n$ and $1\leq t\leq T$, where $D_{i,t}=\oneb\{(i,t)=(1,T)\}$.
The object of interest is the treatment effect $\gamma$. 
\end{example}
Like other causal inference problems, learning $\gamma$ is primarily
the task of learning the counterfactual $Y_{1,T}^{N}$, which is unobserved.
If $Y_{1,T}^{N}$ were known, then the obvious 95\% confidence interval
for $\gamma$ is $[Y_{1,T}-Y_{1,T}^{N}-1.96,Y_{1,T}-Y_{1,T}^{N}+1.96]$.
Since $Y_{1,T}^{N}$ is unobserved, we can replace it with a consistent
estimate. However, there is no guarantee that such an estimate exists
when the factors are not strong. If consistent estimation for $Y_{1,T}^{N}$
is impossible, then a 95\% confidence interval for $\gamma$ needs
to have a width larger than $1.96\times2$. To formally investigate
the estimation and inference for $Y_{1,T}^{N}$, we consider the following
problem. 
\begin{example}[Missing one entry]
\label{exa: missing one entry}Suppose that the model in (\ref{eq: factor model})
holds. We observe all the entries of $X\in\RR^{n\times T}$ except
one entry. Without loss of generality, we assume that $X_{1,1}$ is
missing and thus the observed data is $X_{-1,-1}$. The goal is to
estimate $M_{1,1}$ and conduct inference. 
\end{example}
The main feature of Example \ref{exa: missing one entry} is that
the missing pattern is non-random. Since there is only one entry missing,
we essentially observe the entire data. For this reason, the problem
is very closely related to the following one. 
\begin{example}[Full observation]
\label{exa: entrywise estimation} We observe $X\in\RR^{n\times T}$
from the model in (\ref{eq: factor model}). The goal is to estimate
$M_{1,1}$ and conduct inference. 
\end{example}
Example \ref{exa: entrywise estimation} corresponds to the problem
of estimation and inference in standard large factor models. Many
classical results are developed by \citet{Bai2003a} and the references
therein. In fact, the problem missing data in factor models also has
a long history dating back to at least \citet{stock2002macroeconomic}.
Very recently, works by \citet{su2019factor,Bai1910.06677,xiong1910.08273}
provided extensive asymptotic theories for various missing data problems
under the strong factor condition. However, when the factors are not
assumed to be strong, it is still unclear what can be done and what
is impossible. 
\begin{example}[Missing at random]
\label{exa: random missing} We observe $\tX,\Xi\in\RR^{n\times T}$
with $\tX_{i,t}=X_{i,t}\Xi_{i,t}$, where $X_{i,t}$ is from the model
in (\ref{eq: factor model}) and $\Xi_{i,t}\in\{0,1\}$ is i.i.d Bernoulli
with $\PP(\Xi_{i,t}=1)=\pi$. We assume that $X$ and $\Xi$ are independent.
The goal is to estimate $M_{1,1}$ and conduct inference. 
\end{example}
Example \ref{exa: random missing} is also called the matrix completion
problem. This literature aims to estimate $M$ from $(\tX,\Xi)$ and
most of the results are stated in terms of the overall risk in Frobenius
norm: construct an estimate $\hM$ and provide bound on $\|\hM-M\|_{F}$,
see \citep{candes2009exact,candes2010power,recht2010guaranteed,keshavan2010matrix,candes2011tight,rohde2011estimation,koltchinskii2011nuclear}
among others. 

These results might seem quite encouraging in that they derive the
bounds without the strong factor condition. In particular, one can
construct an estimate $\hM$ and guarantee 
\[
(nT)^{-1}\sum_{i=1}^{n}\sum_{t=1}^{T}(\hM_{i,t}-M_{i,t})^{2}=o_{P}(1)
\]
 without any assumption on the factor strength. Since the average
(across entries) squared errors tends to zero, one might expect the
estimation error for a typical entry to be small without the strong
factor condition, e.g., $\hM_{1,1}-M_{1,1}=o_{P}(1)$. We now show
that this is not true. 

\subsection{\label{subsec: rate SC}Optimal rates: known number of factors}

We focus on the analysis of Example \ref{exa: missing one entry}.
When the number of factors is known to be one, we consider the problem
of how the strength of this factor affects the efficiency of estimation
and inference for $M_{1,1}$. The basic problem of our analysis is
to distinguish between
\[
\Mcalnull(\tau):=\left\{ M\in\Mcal(\tau):\ M_{1,1}=0\right\} 
\]
and 
\[
\Mcal_{\rho}(\tau):=\left\{ M\in\Mcal(\tau):\ |M_{1,1}|\geq\rho\right\} 
\]
for $\rho>0$. When the factors are strong, i.e., $\tau\gtrsim\sqrt{nT}$,
results in \citet{Bai1910.06677} suggest that non-trivial power can
be achieved when $\rho\gtrsim\min\{n^{-1/2},T^{-1/2}\}$. Our goal
in this subsection is to answer the following questions: 
\begin{itemize}
\item for a given $\tau$, how large does $\rho$ need to be to guarantee
non-trivial power in testing $\Mcalnull(\tau)$ versus $\Mcal_{\rho}(\tau)$?
\item if $\tau$ is not given, do we need a larger $\rho$ to distinguish
between $\Mcalnull(\tau)$ and $\Mcal_{\rho}(\tau)$?
\end{itemize}
In this section, we fix $\alpha\in(0,1)$. We first give an impossibility
result concerning the first question. 
\begin{thm}
\label{thm: lower bound 1 factor SC}Assume $\tau\leq\kappa\sqrt{nT}/12$.
Let $\psi$ be a measurable function taking values in $[0,1]$ such
that 
\begin{equation}
\sup_{M\in\Mcalnull(\tau)}\EE_{M}\psi(X_{-1,-1})\leq\alpha.\label{eq: size SC lower bnd}
\end{equation}
Then 
\begin{equation}
\inf_{M\in\Mcal_{\rho}(\tau)}\EE_{M}\psi(X_{-1,-1})\leq2\alpha,\label{eq: power SC lower bnd}
\end{equation}
where $\rho=C\min\{1,\sqrt{n+T}/\tau\}$and $C>0$ is a constant that
depends only on $\kappa$ and $\alpha$. 
\end{thm}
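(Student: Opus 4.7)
The strategy is Le Cam's two-point method. Since $X_{-1,-1}$ consists of independent $N(M_{i,t},1)$ coordinates for $(i,t)\neq(1,1)$ (and a degenerate coordinate at $(1,1)$), the KL divergence between $\PP_M$ and $\PP_{M'}$ on $X_{-1,-1}$ equals $\tfrac{1}{2}\|(M-M')_{-1,-1}\|_F^2$, and Pinsker's inequality bounds the total variation by $\tfrac{1}{2}\|(M-M')_{-1,-1}\|_F$. Hence it suffices to exhibit $M_0\in\Mcalnull(\tau)$ and $M_1\in\Mcal_{\rho}(\tau)$ with $\|(M_0-M_1)_{-1,-1}\|_F\leq 2\alpha$, for then
\[
\inf_{M\in\Mcal_{\rho}(\tau)}\EE_M\psi \;\leq\; \EE_{M_1}\psi \;\leq\; \EE_{M_0}\psi+\tfrac{1}{2}\|(M_0-M_1)_{-1,-1}\|_F\;\leq\;2\alpha.
\]
The construction splits into the two regimes embedded in $\rho=C\min\{1,\sqrt{n+T}/\tau\}$.

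\textbf{Low-signal regime.} When $\tau\leq\kappa\sqrt{n-1}$, set $v=(0,c,\ldots,c)'\in\RR^n$ with $c=\tau/\sqrt{n-1}$, and define $M_0=v e_1'$ and $M_1=(v+\rho e_1)e_1'$. Both are rank one; $\sigma_1(M_0)=\|v\|_2=\tau$ and $\sigma_1(M_1)\geq\tau$; $\|M_0\|_\infty=c$ and $\|M_1\|_\infty=\max(c,\rho)$ are both $\leq\kappa$ whenever $\rho\leq\kappa$; and $M_0$, $M_1$ agree off the $(1,1)$ entry, so $\|(M_0-M_1)_{-1,-1}\|_F=0$. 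Thus any $\rho\leq\kappa$ is admissible. The mirror construction (column-supported, $M_0=e_1 u'$ with $u=(0,c,\ldots,c)'\in\RR^T$) handles $\tau\leq\kappa\sqrt{T-1}$, so these two jointly cover $\tau\lesssim\sqrt{n+T}$.

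\textbf{High-signal regime.} Take $L^0=(0,\ell,\ldots,\ell)'\in\RR^n$ and $F^0=(f_1,f,\ldots,f)'\in\RR^T$, so $(M_0)_{1,t}=0$ for every $t$. Choose $\ell f=\tau/\sqrt{(n-1)(T-1)}$ (guaranteeing $\sigma_1(M_0)\geq\ell f\sqrt{(n-1)(T-1)}=\tau$) and $|f_1|=\kappa f\sqrt{(n-1)(T-1)}/\tau$ (saturating $\ell|f_1|\leq\kappa$). Set $M_1=(L^0+\delta e_1)(F^0)'$ with $\delta=2\alpha/(f\sqrt{T-1})$. Then $M_1$ is rank one, agrees with $M_0$ on rows $i\geq 2$, and differs on row $1$ by $\delta F^{0\prime}$; consequently $\|(M_0-M_1)_{-1,-1}\|_F^2=\delta^2(T-1)f^2=4\alpha^2$, while $(M_1)_{1,1}=\delta f_1=2\alpha\kappa\sqrt{n-1}/\tau$. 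The assumption $\tau\leq\kappa\sqrt{nT}/12$ ensures $\ell f\leq\kappa$, and the regime condition $\tau\geq\kappa\sqrt{n-1}$ ensures $|f_1|\leq f\sqrt{T-1}$ so that the dimensional computations are as claimed. The symmetric construction that perturbs $F^0$ instead of $L^0$ gives $\rho\asymp\sqrt{T-1}/\tau$, and the better of the two yields $\rho\asymp\sqrt{n+T}/\tau$.

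\textbf{Main obstacle.} The hardest part is reconciling the rank-one constraint with a small information footprint. Inserting a nonzero entry at $(1,1)$ of a rank-one matrix necessarily perturbs the entire row $1$ (or column $1$), and this leakage enters the KL divergence through the $T-1$ (or $n-1$) visible entries. The remedy is to concentrate the mass of $F^0$ on its first coordinate as much as $\|M\|_\infty\leq\kappa$ permits, which trades off against $\sigma_1\geq\tau$; this tradeoff is precisely what produces the scale $\sqrt{n+T}/\tau$. The hypothesis $\tau\leq\kappa\sqrt{nT}/12$ is what guarantees that this tradeoff has feasible solutions.
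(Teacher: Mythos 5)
Your proof is the paper's method: a two-point argument with rank-one matrices whose perturbation sits in a single row (or column) so that most of it is hidden at the unobserved $(1,1)$ cell. The only cosmetic difference is how the two hypotheses are shown to be statistically close: you use KL plus Pinsker, while the paper's Lemmas \ref{lem: chi2 distance}--\ref{lem: bound TV} bound $\EE_{\bM}\bigl|d\PP_{\tM}/d\PP_{\bM}-1\bigr|$ by a $\chi^2$-type quantity; both routes yield $|\EE_{M_1}\psi-\EE_{M_0}\psi|\lesssim\|(M_0-M_1)_{-1,-1}\|_F$. Your explicit split into low- and high-signal regimes replaces the paper's single choice $c_2=q\min\{1/2,\sqrt T/\tau\}$, and your low-signal case, where the two laws on $X_{-1,-1}$ coincide exactly, is a clean simplification.

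There is, however, a gap in the high-signal construction: you never verify that $M_1\in\Mcal$, that is, $\|M_1\|_\infty\leq\kappa$. With $\delta=2\alpha/(f\sqrt{T-1})$, the first row of $M_1$ has entries $\delta f_1=2\alpha\kappa\sqrt{n-1}/\tau$ and $\delta f=2\alpha/\sqrt{T-1}$; under the regime condition $\tau\geq\kappa\sqrt{n-1}$ these are bounded only by $2\alpha$, which exceeds $\kappa$ whenever $\kappa<2\alpha$. Since $\kappa>0$ is an arbitrary constant, the containment $M_1\in\Mcal_\rho(\tau)$ can fail, and then $\EE_{M_1}\psi$ does not control $\inf_{M\in\Mcal_\rho(\tau)}\EE_M\psi$. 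The repair is to cap $\delta$ so that $\max\{\delta|f_1|,\delta f\}\leq\kappa$: shrinking $\delta$ only decreases $\|(M_0-M_1)_{-1,-1}\|_F$, so the total-variation bound survives, and the resulting $(M_1)_{1,1}=\min\{2\alpha\kappa\sqrt{n-1}/\tau,\kappa\}$ is still $\gtrsim\min\{1,\sqrt{n+T}/\tau\}$ after combining with the mirror (row/column-swapped) variant. This is exactly what the paper's cap $c_2\leq 1/2$ accomplishes, and it is why the proof there includes the explicit check $\max\{\|\bM\|_\infty,\|\tM\|_\infty\}\leq\kappa$. You should also spell out how the two regimes and their mirrors piece together into a single constant $C(\kappa,\alpha)$, since the cutoffs $\tau\lessgtr\kappa\sqrt{n-1}$ and $\tau\lessgtr\kappa\sqrt{T-1}$ need not coincide.
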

In Theorem \ref{thm: lower bound 1 factor SC}, $\psi$ is a test
for $H_{0}:\ M\in\Mcalnull(\tau)$ versus $H_{1}:\ M\in\Mcal_{\rho}(\tau)$.
A test is a measurable function that maps the data $X_{-1,-1}$ to
$\{0,1\}$, where 1 denotes rejection of $H_{0}$. We allow for random
tests by extending $\{0,1\}$ to the interval $[0,1]$. The requirement
in (\ref{eq: size SC lower bnd}) ensures size control uniformly in
the null space $\Mcalnull(\tau)$. The conclusion of Theorem \ref{thm: lower bound 1 factor SC}
states that when $\rho\lesssim\min\{1,\sqrt{n+T}/\tau\}$, no test
can guarantee non-trivial power in distinguishing $\Mcalnull(\tau)$
and $\Mcal_{\rho}(\tau)$. 

As a consequence, it is impossible to achieve consistency in estimating
$M_{1,1}$ if $\tau\lesssim\sqrt{n+T}$. To see this, notice that
in this case, there exists a constant $\rho_{0}>0$ such that no test
can have high power in testing $H_{0}:\ M_{1,1}=0$ against $H_{1}:\ |M_{1,1}|\geq\rho_{0}$.
Hence, if a consistent estimator $\hM_{1,1}$ exists, then the simple
test of $\oneb\{|\hM_{1,1}|\leq\rho_{0}/2\}$ would have both Type
I error and Type II error tending to zero for $H_{0}:\ M_{1,1}=0$
against $H_{1}:\ |M_{1,1}|\geq\rho_{0}$. However, since such a test
is impossible (due to Theorem \ref{thm: lower bound 1 factor SC}),
a consistent estimator for $M_{1,1}$ does not exist. 

Another consequence of Theorem \ref{thm: lower bound 1 factor SC}
is that the factor strength has a direct impact on the rate of learning
$M_{1,1}$. In order to obtain the usual rate of $\min\{n^{-1/2},T^{-1/2}\}$,
the strong factor assumption of $\tau\asymp\sqrt{nT}$ is necessary.
If $\tau\ll\sqrt{nT}$, the rate for $M_{1,1}$ is strictly worse
than $\min\{n^{-1/2},T^{-1/2}\}$. 

We now present an implication of Theorem \ref{thm: lower bound 1 factor SC}
on confidence intervals. Although this implication is immediate, the
notations we introduce will be very useful in discussing later issues,
especially with unknown number of factors. For a given parameter space
$\Mcal^{(1)}\subseteq\Mcal$, we define the set of valid $1-\alpha$
confidence intervals as measurable functions that map the data $X_{-1,-1}$
to an interval such that this interval covers $M_{1,1}$ with probability
at least $1-\alpha$ for all parameters in $\Mcal^{(1)}$. In other
words, we define 
\[
\Phi(\Mcal^{(1)})=\left\{ CI(\cdot)=[l(\cdot),u(\cdot)]:\ \inf_{M\in\Mcal^{(1)}}\PP_{M}\left(M_{1,1}\in CI(X_{-1,-1})\right)\geq1-\alpha\right\} .
\]

The minimax expected length of confidence intervals over $\Mcal^{(1)}$
is 
\[
\Lcal(\Mcal^{(1)})=\inf_{CI\in\Phi(\Mcal^{(1)})}\sup_{M\in\Mcal^{(1)}}\EE_{M}|CI(X_{-1,-1})|,
\]
where $|CI|$ denotes the length of the confidence interval $CI$,
i.e., $|CI\dotbracket|=u\dotbracket-l\dotbracket$ for $CI\dotbracket=[l\dotbracket,u\dotbracket]$.
Minimax rate for the length of confidence intervals is a common way
of examining the efficiency for inference in high-dimensional models\footnote{Compared to stating results in terms of size and power, discussing
length of confidence intervals brings simplicity mainly because we
do not need a notation on the hypothesized value in the null.}, \citep[see e.g., ][]{cai2017confidence,bradic1802.09117}. Theorem
\ref{thm: lower bound 1 factor SC} has the following simple implication;
we omit the proof for brevity. 
\begin{cor}
\label{cor: lower bound CI 1-factor SC}Assume $\tau\leq\kappa\sqrt{nT}/12$.
Then 
\[
\Lcal(\Mcal(\tau))\gtrsim\min\{1,\sqrt{n+T}/\tau\}.
\]
\end{cor}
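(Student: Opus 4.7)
The plan is to derive the bound from Theorem \ref{thm: lower bound 1 factor SC} by the standard duality between confidence intervals and tests. Let $\rho = C \min\{1, \sqrt{n+T}/\tau\}$ be the separation rate furnished by that theorem. Given an arbitrary $CI \in \Phi(\Mcal(\tau))$ with worst-case expected length $L := \sup_{M \in \Mcal(\tau)} \EE_M |CI(X_{-1,-1})|$, I will build a test from $CI$ whose size and power profile contradicts Theorem \ref{thm: lower bound 1 factor SC} unless $L$ is comparable to $\rho$.

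Define the test $\psi(X_{-1,-1}) = \oneb\{0 \notin CI(X_{-1,-1})\}$. Size control on $\Mcalnull(\tau)$ is automatic, because $M_{1,1} = 0$ throughout that set, so the coverage property of $CI$ gives $\EE_M \psi = \PP_M(0 \notin CI) \leq \alpha$, verifying the hypothesis (\ref{eq: size SC lower bnd}) of Theorem \ref{thm: lower bound 1 factor SC}.

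For the power lower bound on $\Mcal_\rho(\tau)$, the key geometric observation is that whenever $M_{1,1} \in CI$ and $|CI| < \rho$, one must have $0 \notin CI$: containing both $0$ and $M_{1,1}$ with $|M_{1,1}| \geq \rho$ would force $|CI| \geq \rho$. Combined with Markov's inequality applied to the random length, this yields
\[
\EE_M \psi \geq \PP_M(M_{1,1} \in CI) - \PP_M(|CI| \geq \rho) \geq 1 - \alpha - L/\rho
\]
uniformly in $M \in \Mcal_\rho(\tau)$. Theorem \ref{thm: lower bound 1 factor SC} forces this infimum to be at most $2\alpha$, so $L/\rho \geq 1 - 3\alpha$. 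For $\alpha$ small enough that $1 - 3\alpha > 0$ (this only affects the implicit constant, which is already allowed to depend on $\alpha$), we conclude $L \gtrsim \rho$, and since $CI \in \Phi(\Mcal(\tau))$ was arbitrary, $\Lcal(\Mcal(\tau)) \gtrsim \min\{1, \sqrt{n+T}/\tau\}$.

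No substantive obstacle arises: all the probabilistic difficulty is packaged inside Theorem \ref{thm: lower bound 1 factor SC}, and the only additional ingredient is the Markov step converting a bound on the expected length into a tail bound sharp enough to combine with the testing obstruction. This is presumably why the authors relegate the argument to ``omitted for brevity''.
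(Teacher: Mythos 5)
Your argument is correct and is the standard duality between confidence intervals and tests (test $\psi=\oneb\{0\notin CI\}$, Markov on the length, then invoke Theorem \ref{thm: lower bound 1 factor SC}); this is surely what the authors mean by ``we omit the proof for brevity.'' One small inaccuracy: when $\alpha\geq 1/3$ the factor $1-3\alpha$ is nonpositive, so the conclusion is vacuous rather than merely holding with ``a worse implicit constant'' as you suggest -- but since every quantitative result in this section (and explicitly Theorem \ref{thm: panel standard error}) degenerates for $\alpha$ near $1$, the intended regime is clearly $\alpha$ small and this is immaterial.
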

Corollary \ref{cor: lower bound CI 1-factor SC} states a lower bound
for $\Lcal(\Mcal(\tau))$. We now show that this lower bound is also
optimal. Since consistency is impossible when $\tau_{n}\lesssim\sqrt{n+T}$,
we focus on $\tau_{n}\gg\sqrt{n+T}$. (If $\tau\lesssim\sqrt{n+T}$,
we can simply use the simple confidence interval $[-\kappa,\kappa]$
assuming $\kappa$ is known.) We first construct an estimator that
has the rate of convergence $\sqrt{n+T}/\tau$ when $\tau_{n}\gtrsim\sqrt{n+T}$.
The idea is based on the following the observation:
\[
M_{1,1}=\frac{L_{1}L_{-1}'M_{-1,1}}{\|L_{-1}\|_{2}^{2}},
\]
where $M_{-1,1}=(M_{2,1},...,M_{n,1})'\in\RR^{n-1}$. Hence, a plug-in
estimator would be 
\begin{equation}
\hM_{1,1}=\frac{\hL_{1}\hL_{-1}'X_{-1,1}}{\|\hL_{-1}\|_{2}^{2}},\label{eq: PCA estimate SC}
\end{equation}
where $X_{-1,1}=(X_{2,1},...,X_{n,1})'\in\RR^{n-1}$. Here, $\hL=(\hL_{1},\hL_{-1}')'\in\RR^{n}$
is the PCA estimator for $L$ using data $X_{,-1}=\{X_{i,t}\}_{1\leq i\leq n,\ 2\leq t\leq T}$.
This estimator can be viewed as a version of the tall-wide estimator
by \citet{Bai1910.06677}. The proof in our case is significantly
complicated by the fact that the factor strengths might not be strong.
This leads to difficulties in bounding certain terms; we use a novel
construction that allows us to apply a decoupling argument, see Lemma
\ref{lem: bound key 1} in the appendix. The following result establishes
its rate of convergence under any given factor strength. 
\begin{thm}
\label{thm: upper bound 1 factor SC}Consider $\hM_{1,1}$ in (\ref{eq: PCA estimate SC}).
Assume that $\tau\geq4\max\{\sqrt{10}\kappa,2\}\sqrt{3(n+T)}$. Then
for any $\alpha\in(0,1)$, there exists a constant $C>0$ such that
\[
\sup_{M\in\Mcal(\tau)}\PP_{M}\left(|\hM_{1,1}-M_{1,1}|>C\sqrt{n+T}/\tau\right)\leq\alpha.
\]
\end{thm}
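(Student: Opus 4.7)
The plan is to exploit the independence between $\hL$ (built from $X_{,-1}$) and the column $u_{-1,1}$ in $X_{-1,1}=L_{-1}F_{1}+u_{-1,1}$, decomposing the estimation error as $\hM_{1,1}-M_{1,1}=F_{1}A+N$, where $A=\hL_{1}\hL'_{-1}L_{-1}/\|\hL_{-1}\|_{2}^{2}-L_{1}$ is deterministic given $\hL$ and $N=\hL_{1}\hL'_{-1}u_{-1,1}/\|\hL_{-1}\|_{2}^{2}$ is Gaussian conditional on $\hL$. Since the estimator is invariant under rescaling $\hL$, take $\hL$ unit-norm; let $\ell=L/\|L\|_{2}$, $\Delta=\hL-\ell$, and $\theta$ the angle between $\hL$ and $\ell$.

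Preliminarily, $\|M\|_{\infty}\le\kappa$ gives $\|M_{,1}\|_{2}\le\sqrt{n}\kappa$ and $\|M_{1,\cdot}\|_{2}\le\sqrt{T}\kappa$, so for the rank-one $M$ the Pythagorean identity $\sigma_{1}(M_{,-1})^{2}=\tau^{2}-\|M_{,1}\|_{2}^{2}\ge\tau^{2}/2$ holds under the stipulated lower bound on $\tau$, together with $|\ell_{1}|\le\sqrt{T}\kappa/\tau$ and $|F_{1}|\|L\|_{2}\le\sqrt{n}\kappa$. Standard Gaussian concentration gives $\|u_{,-1}\|\lesssim\sqrt{n+T}$ with the prescribed probability, and Wedin's theorem then delivers $\sin\theta\lesssim\sqrt{n+T}/\tau$, in particular $\|\hL_{-1}\|_{2}^{2}\ge 1/2$.

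A direct algebraic manipulation using $L=\|L\|_{2}\ell$ collapses $A$ to $\|L\|_{2}(\Delta_{1}\cos\theta-\ell_{1}(1-\cos\theta))/\|\hL_{-1}\|_{2}^{2}$. The crucial observation is $|F_{1}|\|L\|_{2}|\ell_{1}|=|M_{1,1}|\le\kappa$, so the bias from the angle is at most (a constant multiple of) $\kappa(1-\cos\theta)\lesssim\kappa\sin^{2}\theta\lesssim\kappa(n+T)/\tau^{2}\lesssim\sqrt{n+T}/\tau$ by the hypothesis $\tau\gtrsim\sqrt{n+T}\kappa$. The remaining piece $|F_{1}|\|L\|_{2}|\Delta_{1}|\lesssim\sqrt{n}\kappa|\Delta_{1}|$ reaches the target rate only with the sharp entry-wise bound $|\Delta_{1}|\lesssim 1/\tau$. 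Granted that, the noise $N$ is, conditional on $\hL$, centered Gaussian with variance $\hL_{1}^{2}/\|\hL_{-1}\|_{2}^{2}\lesssim(|\ell_{1}|+|\Delta_{1}|)^{2}\lesssim(n+T)\kappa^{2}/\tau^{2}$, so a Gaussian tail bound gives $|N|\lesssim\sqrt{n+T}/\tau$ with the prescribed probability.

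The main obstacle, and what Lemma~\ref{lem: bound key 1} is designed to supply, is the entry-wise bound $|\Delta_{1}|=|\hL_{1}-\ell_{1}|\lesssim 1/\tau$, substantially stronger than the $\ell_{2}$-level bound $\|\Delta\|_{2}\lesssim\sqrt{n+T}/\tau$ and not immediate under weak factors. I would obtain it by a leave-one-out construction: let $\tilde{X}$ be $X_{,-1}$ with its first row replaced by the noise-free signal $L_{1}F'_{-1}$; its top right singular vector $\tilde{V}$ is then determined by noise in rows $2,\ldots,n$ alone and hence independent of $\{u_{1,t}\}_{t\ge 2}$. Using the SVD identity $\hat\sigma_{1}\hL_{1}=\sum_{t\ge 2}X_{1,t}\hat V_{t}$ with $\hat V$ the top right singular vector of $X_{,-1}$, and writing $\hat V=\tilde V+(\hat V-\tilde V)$, the ``clean'' piece $\sum_{t\ge 2}u_{1,t}\tilde V_{t}$ is a unit-variance scalar Gaussian by independence, while the residual $\sum_{t\ge 2}u_{1,t}(\hat V_{t}-\tilde V_{t})$ is controlled by Cauchy--Schwarz together with a Wedin-type estimate on $\|\hat V-\tilde V\|_{2}$, noting that $X-\tilde X$ has only one nonzero row of norm $O(\sqrt{T})$. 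This decoupling is the technically delicate ingredient needed in the weak-factor regime.
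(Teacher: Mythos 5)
Your high-level decomposition (split the error into a bias piece $F_{1}A$ and a noise piece $N$, handle the latter by conditional Gaussianity of $u_{-1,1}$, and exploit $|M_{1,1}|=|L_{1}F_{1}|\le\kappa$ for the angle term) matches the paper's strategy, and the algebraic collapse $A/\|L\|_{2}=(\Delta_{1}\cos\theta-\ell_{1}(1-\cos\theta))/\|\hL_{-1}\|_{2}^{2}$ is correct. The trouble is the intermediate claim on which the whole argument hinges: the entry-wise bound $|\Delta_{1}|\lesssim 1/\tau$ is \emph{not} attainable uniformly over $\Mcal(\tau)$. Writing the SVD identity out, $\hL_{1}=L_{1}(F'\hV)/\hat\sigma_{1}+\varepsilon_{(1)}'\hV/\hat\sigma_{1}$ and hence
\[
\Delta_{1}=\hL_{1}-\ell_{1}=\ell_{1}\left(\frac{\btau\cos\theta_{F}}{\hat\sigma_{1}}-1\right)+\frac{\varepsilon_{(1)}'\hV}{\hat\sigma_{1}},
\]
where $\theta_{F}$ is the angle of $\hV$ to $F$ and $\btau=\|L\|_{2}\|F\|_{2}$. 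The first, $\ell_{1}$-proportional piece has magnitude $\asymp|\ell_{1}|\sqrt{n+T}/\btau$ (coming from $|\hat\sigma_{1}-\btau|=O_{P}(\sqrt{n+T})$ and $1-\cos\theta_{F}=O(\sin^{2}\theta_{F})$), and with $|\ell_{1}|\le\kappa\sqrt{T}/\btau$ this scales like $\kappa\sqrt{T(n+T)}/\btau^{2}$, which is much larger than $1/\tau$ whenever $\tau\asymp\sqrt{n+T}$ and $T$ is large. So the bound you seek is false, and the crude pairing $|F_{1}|\|L\|_{2}\le\sqrt{n}\kappa$ followed by ``$|\Delta_{1}|\lesssim 1/\tau$'' fails. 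What rescues the argument in the paper is that its entry-wise bound (Lemma~\ref{lem: bound key 2}) retains an $|L_{1}|$-dependent term, $|\Delta_{L,1}|\lessp T^{-1/2}+|L_{1}|\btau^{-2}T$, and the final step then pairs that $|L_{1}|$-piece with $|F_{1}|$ via $|L_{1}F_{1}|\le\kappa$ rather than bounding $|\Delta_{1}|$ and $|F_{1}|\|L\|_{2}$ separately. Your plan can in principle be repaired the same way: decompose $\Delta_{1}$ into an $\ell_{1}$-proportional part (which you then absorb using $|M_{1,1}|\le\kappa$) and a remainder, and only the remainder need be $\lesssim1/\tau$-like.

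Even for that remainder your leave-one-out sketch is not yet tight enough. The residual $\varepsilon_{(1)}'(\hV-\tV)$ handled by Cauchy--Schwarz plus a Wedin estimate gives $\|\varepsilon_{(1)}\|_{2}\|\hV-\tV\|_{2}\lesssim\sqrt{T}\cdot\sqrt{T}/\tau=T/\tau$, and after dividing by $\hat\sigma_{1}\asymp\tau$ this is $T/\tau^{2}$, which is not $\lesssim1/\tau$ unless $\tau\gtrsim T$; under the theorem's hypothesis we only have $\tau\gtrsim\sqrt{n+T}$. A leave-one-out route therefore needs a substantially sharper perturbation bound on $\|\hV-\tV\|_{2}$ (or a cancellation argument that avoids Cauchy--Schwarz) exploiting that the perturbation $e_{1}u_{1,\cdot}'$ is rank-one and aligned with a single row. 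The paper sidesteps exactly this difficulty by a different decoupling device: it writes $\varepsilon'\varepsilon=Q+\varepsilon_{(1)}\varepsilon_{(1)}'$ with $Q=\sum_{j\ge2}\varepsilon_{(j)}\varepsilon_{(j)}'$, solves a self-referential equation for $\varepsilon'\Delta_{L}$ by a Neumann (power-series) expansion in $\htau_{n}^{-2}Q$, and then uses that $Q$ is independent of $\varepsilon_{(1)}$ to bound the resulting terms conditionally (Lemma~\ref{lem: bound key 1}). You should either carry out that expansion or supply a genuinely sharper leave-one-out estimate; the version you've sketched does not close the gap in the weak-factor regime.
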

Fortunately, distinguishing between $\tau\gg\sqrt{n+T}$ and $\tau\lesssim\sqrt{n+T}$
is not difficult, thanks to bounds in random matrix theory. Since
$\|M_{-1,-1}\|_{F}^{2}=\|M\|_{F}^{2}-M_{1,1}^{2}\geq\tau^{2}-\kappa^{2}$,
$\|M_{-1,-1}\|_{F}$ and $\tau$ have the same rate. Since $M_{-1,-1}$
has rank at most 2, $\|M_{-1,-1}\|_{F}\geq\|M_{-1,-1}\|\geq\|M_{-1,-1}\|_{F}/\sqrt{2}$.
Therefore, $\|X_{-1,-1}\|\gtrsim\|M_{-1,-1}\|-\|u_{-1,-1}\|=O_{P}(|\tau-\sqrt{n+T}|)$
by Corollary 5.35 of \citet{vershynin2010introduction}. Hence, the
following estimator always has the optimal rate of convergence:
\begin{equation}
\hM_{1,1}\oneb\left\{ \|X_{-1,-1}\|>4\max\{\sqrt{10}\bar{\kappa},2\}\sqrt{3(n+T)}\right\} ,\label{eq: estimator adaptivity}
\end{equation}
where $\bar{\kappa}>0$ is a constant that upper bounds $\kappa$.
The idea is that if $\|X_{-1,-1}\|$ is smaller than the above threshold,
we can safely conclude $\tau\lesssim\sqrt{n+T}$ and thus no estimator
is consistent for $M_{1,1}$, which means that the estimator zero
would have the optimal ``rate'' too (since $M_{1,1}$ is bounded).
Therefore, we have the following result.
\begin{cor}
$\Lcal(\Mcal(\tau))\asymp\min\{1,\sqrt{n+T}/\tau\}.$
\end{cor}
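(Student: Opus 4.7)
The lower bound half, $\Lcal(\Mcal(\tau)) \gtrsim \min\{1,\sqrt{n+T}/\tau\}$, is already delivered by Corollary \ref{cor: lower bound CI 1-factor SC}. It therefore suffices to exhibit a CI lying in $\Phi(\Mcal(\tau))$ whose expected length is $\lesssim \min\{1,\sqrt{n+T}/\tau\}$. Since the construction is allowed to depend on the target space $\Mcal(\tau)$, my plan is to split according to whether $\tau$ exceeds the Theorem \ref{thm: upper bound 1 factor SC} threshold $\tau_{0} := 4\max\{\sqrt{10}\kappa,2\}\sqrt{3(n+T)}$.

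In the weak regime $\tau \leq \tau_{0}$, the quantity $\min\{1,\sqrt{n+T}/\tau\}$ is bounded below by a positive constant depending only on $\kappa$, so the deterministic interval $[-\kappa,\kappa]$ both lies in $\Phi(\Mcal(\tau))$ (it covers $M_{1,1}$ with probability one whenever $|M_{1,1}|\leq \|M\|_{\infty} \leq \kappa$, which is built into $\Mcal$) and has length $2\kappa \lesssim \min\{1,\sqrt{n+T}/\tau\}$.

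In the strong regime $\tau > \tau_{0}$, I take the plug-in estimator $\hM_{1,1}$ from (\ref{eq: PCA estimate SC}) and report
$$CI(X_{-1,-1}) \;=\; \bigl[\hM_{1,1} - C\sqrt{n+T}/\tau,\;\hM_{1,1} + C\sqrt{n+T}/\tau\bigr],$$
where $C$ is the constant supplied by Theorem \ref{thm: upper bound 1 factor SC} at level $\alpha$. That theorem immediately gives $\inf_{M \in \Mcal(\tau)} \PP_{M}(M_{1,1} \in CI) \geq 1-\alpha$, so $CI \in \Phi(\Mcal(\tau))$; meanwhile its deterministic length is $2C\sqrt{n+T}/\tau = 2C \min\{1,\sqrt{n+T}/\tau\}$ because $\sqrt{n+T}/\tau < 1$ in this regime.

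Combining the two regimes yields the matching upper bound. All of the analytic content has already been packaged into Corollary \ref{cor: lower bound CI 1-factor SC} and Theorem \ref{thm: upper bound 1 factor SC}, so there is no substantive obstacle beyond the case split; indeed the same rate can be attained by the fully data-driven estimator in (\ref{eq: estimator adaptivity}) (with $\tau$ in the half-width replaced by $\|X_{-1,-1}\|$), thanks to the random-matrix bound $\|X_{-1,-1}\| \gtrsim |\tau - \sqrt{n+T}|$ recalled just before the corollary, but this stronger fact is not needed for the minimax statement itself.
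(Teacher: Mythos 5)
Your proof is correct, and it is slightly cleaner than the route the paper gestures at. The paper arrives at this corollary via the data-driven estimator (\ref{eq: estimator adaptivity}) and the accompanying random-matrix observation that $\|X_{-1,-1}\|$ detects the order of $\tau$; it never spells out the conversion from estimator rate to confidence-interval length. You correctly observe that for a \emph{minimax} statement over the fixed class $\Mcal(\tau)$, the CI construction may use $\tau$ itself, so the data-driven thresholding is a stronger fact than is actually needed. Your case split at $\tau_0=4\max\{\sqrt{10}\kappa,2\}\sqrt{3(n+T)}$ exactly matches the threshold of Theorem \ref{thm: upper bound 1 factor SC}: in the weak regime, $\min\{1,\sqrt{n+T}/\tau\}$ is bounded below by a positive $\kappa$-dependent constant, so the fixed interval $[-\kappa,\kappa]$ has the right order; in the strong regime $\sqrt{n+T}/\tau<1$, so the deterministic half-width $C\sqrt{n+T}/\tau$ gives both validity (by Theorem \ref{thm: upper bound 1 factor SC}) and the right length. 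Combined with the lower bound from Corollary \ref{cor: lower bound CI 1-factor SC}, this is complete. The one caveat applies equally to the paper and to you: Corollary \ref{cor: lower bound CI 1-factor SC} is stated only for $\tau\leq\kappa\sqrt{nT}/12$, and for $\tau$ above that level (but still $\leq\kappa\sqrt{nT}$, which is forced by $\|A\|_\infty\leq\kappa$ and $\rank A\leq 1$) the lower bound $\gtrsim\sqrt{(n+T)/(nT)}$ would require a separate two-point argument; since the paper omits this, it is reasonable to read the corollary as implicitly inheriting that restriction.
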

Notice that the estimator in (\ref{eq: estimator adaptivity}) is
adaptive in that it automatically achieves the optimal rate $\min\{1,\sqrt{n+T}/\tau\}$
without prior knowledge of $\tau$. Since we can learn the rate for
$\tau$ from $\|X_{-1,-1}\|$, we can estimate the rate $\min\{1,\sqrt{n+T}/\tau\}$
from the data and thereby construct a confidence interval centered
around the estimator in (\ref{eq: estimator adaptivity}): simply
choose a width of $C_{0}\min\{\sqrt{n+T}/\|X_{-1,-1}\|,1\}$ for a
universal constant $C_{0}$ (which can be explicitly determined).
Therefore, the length of the confidence interval is also adaptive
since it automatically adjusts to have the optimal rate. Unfortunately,
this turns out to be true only when the number of factors is known.
We will now see that when we do not know the exact number of factors
(including weak ones), such adaptivity is impossible. 

\subsection{Adaptivity: unknown number of factors}

The previous analysis already establishes the optimal rate under knowledge
of the exact number of factors. Now we aim to answer the following
questions:
\begin{itemize}
\item Is it possible to achieve the same rate as established before if the
number of factors is not known?
\item Do we pay a price if we cannot determine the number of weak factors?
\end{itemize}
These questions arise naturally in practice as the researcher is typically
not given the exact number of factors. In these cases, one often needs
to estimate the number of factors from the data and then proceeds
using this estimate.%

Although such a strategy makes intuitive sense, it is still far from
clear whether lack of knowledge on the number of factors has any cost.
This problem is particularly tricky for inference since uniform size
control (or coverage) is a concern; we do not have a notion of uniform
validity for estimation. Suppose that we know there are either one
or two factors. Ideally, we would like to construct a confidence interval
that has $1-\alpha$ coverage probability uniformly over all one-factor
models and all two-factor models, regardless of factor strengths.
Procedures that involve a pre-estimation of the number of factors
are intended to have such robustness. If there is one factor, then
hopefully the procedure will find out that there is one factor and
constructs the confidence interval accordingly; if there are two factors,
then the procedure is supposed to detect that and constructs a valid
confidence interval based on that finding. This leads to a confidence
interval summarized in Figure \ref{fig: PCA estimated num factors}.
Even for methods, such as nuclear-norm-regularized approaches, which
do not need the number of factors as an input, we still hope that
there is a data-driven procedure that would provide valid inference
no matter what the true number of factors is. In light of this robustness
requirement, we are interested in finding out whether these uniformly
valid procedures lose any efficiency, compared to the situation with
known number of factors. 

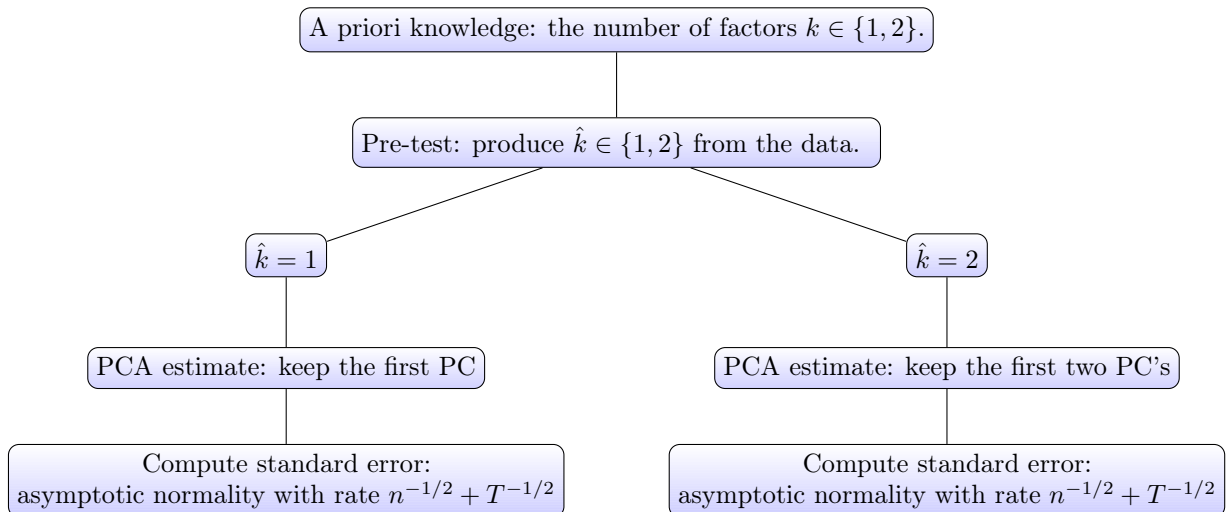
\begin{figure}
\caption{\label{fig: PCA estimated num factors}Pre-tests + PCA}

{\small{}\medskip{}
}{\small\par}

{\small{}\begin{tikzpicture}[sibling distance=25em,   every node/.style = {shape=rectangle, rounded corners,     draw, align=center,     top color=white!10, bottom color=blue!20}]]   \node {A priori knowledge: the number of factors  $k\in\{1,2\} $.}   child { node {Pre-test: produce $\hat{k}\in\{1,2\} $ from the data. }     child { node {$\hat{k}=1$}     child { node {PCA estimate: keep the first PC}   child { node {Compute standard error: \\asymptotic normality with rate $n^{-1/2}+T^{-1/2} $ } } } }     child { node {$\hat{k}=2$}       child { node {PCA estimate: keep the first two PC's}         child { node {Compute standard error: \\asymptotic normality with rate $n^{-1/2}+T^{-1/2} $ } } } }}; \end{tikzpicture}}{\small\par}

{\small{}The rate $n^{-1/2}+T^{-1/2}$ and the asymptotic normality
under strong factors are established in \citet{Bai2003a}. Here, the
term PC refers to principal component. }{\small\par}
\end{figure}

We would not expect any loss of efficiency if the number of factors
can be consistently estimated. Unfortunately, determining the number
of weak factors is quite difficult (if not impossible) since typical
methods \citep[e.g.,][]{bai2002determining,onatski2009testing,ahn2013eigenvalue}
only promise success in detecting strong factors. If we run these
pre-tests and ignore potential weak factors, do we pay a price in
terms of validity or efficiency?

To formally answer this question, we introduce the notion of inference
adaptivity. Consider the class of one-factor models with factor strength
$\tau_{0}$: 
\[
\Mcal^{(1)}=\Mcal(\tau_{0}).
\]

From the previous analysis, we know that a confidence interval for
$M_{1,1}$ with validity over $\Mcal^{(1)}$ can have shrinking length
only if $\tau_{0}\gg\sqrt{n+T}$. Since we do not know for sure that
there is only one factor, we would like to consider a confidence interval
that has validity over a larger space. For example, given $\tau_{1}\geq\tau_{2}>0$,
we define
\[
\Mcal^{(2)}=\Mcal(\tau_{1},\tau_{2}):=\left\{ A\in\Mcal:\ \sigma_{1}(A)\geq\tau_{1},\ \sigma_{2}(A)\leq\tau_{2}\right\} ,
\]
where $\tau_{1}\leq\tau_{0}$. Clearly, $\Mcal^{(1)}\subset\Mcal^{(2)}$.

The primary setting is $\tau_{1},\tau_{0}\gg\sqrt{n+T}$ and $\tau_{2}\lesssim\sqrt{n+T}$.
In this setting, $\Mcal^{(2)}$ allows for a second weak factor. The
exercise is to consider all the confidence intervals that have uniform
coverage over $\Mcal^{(2)}$ and then choose the one that has the
best efficiency over $\Mcal^{(1)}$ (in terms of expected length).
The questions listed in the two bullet points above now become an
issue of adaptivity. If there is a confidence interval that is valid
over $\Mcal^{(2)}$ and has length automatically achieves the rate
of $\sqrt{n+T}/\tau_{0}$ over $\Mcal^{(1)}$, then (at least for
the rate) lack of knowledge on the number of factors does not have
a cost and in particular we do not pay a price in terms of size and
power for failing to detect weak factors. If any confidence interval
that has validity over $\Mcal^{(2)}$ necessarily has a length greater
than $O(\sqrt{n+T}/\tau_{0})$ over $\Mcal^{(1)}$, then allowing
for the extra validity on $\Mcal^{(2)}\backslash\Mcal^{(1)}$ causes
an efficiency loss. In particular, we are interested in the following
quantity
\[
\Lcal(\Mcal^{(1)},\Mcal^{(2)})=\inf_{CI\in\Phi(\Mcal^{(2)})}\sup_{M\in\Mcal^{(1)}}\EE_{M}|CI(Y_{-1,-1})|.
\]

The above quantity is the best guaranteed expected length over $\Mcal^{(1)}$
among all the confidence intervals with uniform validity on $\Mcal^{(2)}$.
The minimax rate $\Lcal(\Mcal^{(1)})$ defined before is a special
case since $\Lcal(\Mcal^{(1)})=\Lcal(\Mcal^{(1)},\Mcal^{(1)})$. Notice
that by construction, we have $\Lcal(\Mcal^{(1)},\Mcal^{(2)})\geq\Lcal(\Mcal^{(1)})$.
If $\Lcal(\Mcal^{(1)},\Mcal^{(2)})\gg\Lcal(\Mcal^{(1)})$, then the
additional validity requirement on $\Mcal^{(2)}\backslash\Mcal^{(1)}$
has a negative impact on the efficiency on $\Mcal^{(1)}$. If $\Lcal(\Mcal^{(1)},\Mcal^{(2)})\asymp\Lcal(\Mcal^{(1)})$,
then we say that it is possible to achieve adaptive inference; the
procedure can automatically recognize that the parameter is in the
``fast-rate-region'' $\Mcal^{(1)}$ and constructs the confidence
interval accordingly. 
\begin{rem}
\label{rem: linear IV}Examples of adaptive inference can be found
in other econometric models. Consider the linear IV model with one
endogenous regressor and one instrumental variable (IV). We know that
when the IV is strong, the confidence interval for the regression
coefficient shrinks at the parametric rate $n^{-1/2}$; when the IV
is weak, the confidence interval might not shrink to zero. One can
invert an identification-robust test\footnote{For example, the Anderson-Rubin test, Lagrange multiplier test by
\citet{kleibergen2005testing} or conditional likelihood ratio test
by \citet{moreira2003conditional}.} to obtain a confidence interval. Clearly, such an interval is valid
for any identification strength. On the other hand, this interval
will shrink at the rate $n^{-1/2}$ when the IV is strong. In this
example, $\Mcal^{(1)}$ corresponds to the parameter space with strong
IV and $\Mcal^{(2)}$ represents all parameter values (i.e., the identification
may or may not be strong). 
\end{rem}
We now show that such adaptivity is unfortunately impossible for factor
models when the number of factors is unknown. %

\begin{thm}
\label{thm: adaptivity SC 1}Assume that $\tau_{1},\tau_{0}\leq\kappa\sqrt{nT}/12$
and $\tau_{2}\geq\kappa_{1}$. There exists a constant $C>0$ such
that 
\[
\Lcal(\Mcal^{(1)},\Mcal^{(2)})\geq C.
\]
\end{thm}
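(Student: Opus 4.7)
The plan is a two-point (Le Cam-style) argument that exploits a special feature of the setup: the data $X_{-1,-1}$ depends on $M$ only through the entries $M_{i,t}$ with $(i,t) \neq (1,1)$. Consequently, if I can exhibit $M^{(1)} \in \Mcal^{(1)}$ and $M^{(2)} \in \Mcal^{(2)}$ that agree at every entry except $(1,1)$, then the laws of the observations coincide \emph{exactly} (not just approximately) under $\PP_{M^{(1)}}$ and $\PP_{M^{(2)}}$. Any $CI \in \Phi(\Mcal^{(2)})$ must then cover both $M^{(1)}_{1,1}$ and $M^{(2)}_{1,1}$ with probability at least $1-\alpha$ under a common law, forcing $|CI|$ to exceed $|M^{(1)}_{1,1}-M^{(2)}_{1,1}|$ on an event of probability at least $1-2\alpha$; taking expectations yields the constant-order lower bound.

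For the construction I would use the simplest possible pair. Let $e \in \RR^n$ and $\tilde e \in \RR^T$ denote the first standard basis vectors and let $c>0$ be a constant depending only on $\kappa$ and $\kappa_1$ (fixed momentarily). Set
\[
M^{(1)} = \nu\,\oneb_n \oneb_T', \quad \nu = \frac{\tau_0+c}{\sqrt{nT}}, \qquad M^{(2)} = M^{(1)} + c\, e\,\tilde e'.
\]
Membership of $M^{(1)}$ in $\Mcal^{(1)}=\Mcal(\tau_0)$ follows from $\sigma_1(M^{(1)}) = \tau_0+c \ge \tau_0$ and $\|M^{(1)}\|_\infty = \nu \le \kappa$, using $\tau_0 \le \kappa\sqrt{nT}/12$. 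For $M^{(2)}$, Weyl's inequality applied to the rank-one, spectral-norm-$c$ perturbation $c\,e\tilde e'$ gives $\sigma_1(M^{(2)}) \ge \tau_0 \ge \tau_1$ and $\sigma_2(M^{(2)}) \le c$. Choosing $c = \min\{\kappa_1,\,\kappa/12\}$ simultaneously ensures $\sigma_2(M^{(2)}) \le \kappa_1 \le \tau_2$ and $\|M^{(2)}\|_\infty \le \nu + c \le \kappa$. Hence $M^{(2)} \in \Mcal^{(2)} = \Mcal(\tau_1,\tau_2)$, and by construction $|M^{(1)}_{1,1}-M^{(2)}_{1,1}| = c$.

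The step needing the most care is this membership bookkeeping, because the hypothesis allows $\tau_1 = \tau_0$ with no gap. A naive choice $\sigma_1(M^{(1)})=\tau_0$ would leave $\sigma_1(M^{(2)}) \ge \tau_0 - c$, which can fall below $\tau_1$ in the worst case. I absorb this by inflating $\sigma_1(M^{(1)})$ by $c$; this remains feasible because the largest singular value admissible in $\Mcal$ is of order $\kappa\sqrt{nT}$, which is well above $\tau_0+c$ for the allowed range of $\tau_0$.

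With the two points in place the conclusion is immediate: for any $CI \in \Phi(\Mcal^{(2)})$, coincidence of the data laws and the coverage guarantee give $\PP_{M^{(1)}}(M^{(j)}_{1,1} \in CI) \ge 1-\alpha$ for $j=1,2$; a union bound then yields $\PP_{M^{(1)}}(\{M^{(1)}_{1,1},M^{(2)}_{1,1}\}\subset CI) \ge 1-2\alpha$, on which event $|CI| \ge c$. Taking expectations under $M^{(1)} \in \Mcal^{(1)}$ delivers $\Lcal(\Mcal^{(1)},\Mcal^{(2)}) \ge c(1-2\alpha) =: C > 0$, a constant depending only on $\kappa$, $\kappa_1$, and $\alpha$.
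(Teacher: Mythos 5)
Your proof is correct and uses the same key idea as the paper: two parameters in $\Mcal^{(2)}$ whose difference is supported only at the unobserved entry $(1,1)$ induce identical observation laws, so a two-point coverage argument forces any $\Mcal^{(2)}$-valid confidence interval to have constant expected length on $\Mcal^{(1)}$. The paper obtains this theorem as a one-line corollary of Theorem \ref{thm: adaptivitity SC main}, which runs the same perturbation $\bM \mapsto \bM + c_0\,e\tilde{e}'$ at an arbitrary base point in $\Mcal_*^{(1)}$; your argument simply instantiates it at the concrete rank-one matrix $M^{(1)} = \nu\,\oneb_n\oneb_T'$ and verifies the membership bookkeeping directly.
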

In this paper, confidence intervals with validity over $\Mcal^{(2)}$
will be referred to as robust confidence intervals (or weak-factor-robust
confidence intervals). Theorem \ref{thm: adaptivity SC 1} has a striking
implication when $\tau_{1},\tau_{0}\asymp\sqrt{nT}$ and $\tau_{2}$
is bounded away from zero. In this case, no robust confidence intervals
can guarantee to have shrinking width when there is actually only
one factor and this factor is strong; recall that in this case, $\Lcal(\Mcal^{(1)})\asymp\sqrt{n+T}/\tau_{0}\asymp\min\{n^{-1/2},T^{-1/2}\}$.
Hence, $\Lcal(\Mcal^{(1)},\Mcal^{(2)})\gg\Lcal(\Mcal^{(1)})$. In
other words, robustness necessarily causes efficiency loss. Unlike
the adaptivity in Remark \ref{rem: linear IV} for linear IV models,
no confidence intervals valid over $\Mcal^{(2)}$ can adapt its efficiency
on $\Mcal^{(1)}$. When $\tau_{0},\tau_{1},\tau_{2}\asymp\sqrt{nT}$,
robust confidence intervals will have lengths tending to zero only
if there are two factors and both are strong, but in this case we
are back to the situation with known number of factors.

The other side of this coin is the observation that any confidence
interval that has shrinking width and ignores weak factor necessarily
lacks uniform coverage. We state an explicit result below.
\begin{thm}
\label{thm: bad coverage}Let $\Ical_{*}(X_{-1,-1})=[l_{*}(X_{-1,-1}),u_{*}(X_{-1,-1})]$
be a random interval. Suppose that for some sequences $\tau_{0}\gtrsim1$,
we have $\sup_{M\in\Mcal^{(1)}}\EE_{M}\left|\Ical_{*}(X_{-1,-1})\right|=o(1)$.
If $\tau_{2}\gtrsim1$, then 
\[
\limsup_{n,T\rightarrow\infty}\inf_{M\in\Mcal^{(2)}}\PP_{M}\left(M_{1,1}\in\Ical_{*}(X_{-1,-1})\right)\leq\frac{1}{2}.
\]
\end{thm}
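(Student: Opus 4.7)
The plan is a two-point argument: I will exhibit $M_{0}\in\Mcal^{(1)}$ and $M_{1}\in\Mcal^{(2)}$ that induce exactly the same distribution on the observable $X_{-1,-1}$ but whose $(1,1)$ entries are separated by a fixed positive constant $c$. Since the shrinkage assumption forces the interval length to be below $c/2$ with probability tending to one under $M_{0}$, it cannot cover both $(M_{0})_{1,1}$ and $(M_{1})_{1,1}$; this caps the average of the two coverages, and hence their minimum, at $1/2+o(1)$. The identical-law trick is possible precisely because $X_{-1,-1}$ discards the single coordinate at which the two means are allowed to disagree, which is exactly the coordinate whose value we are trying to learn.

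\textbf{Construction.} Fix a constant $c\in(0,\kappa]$ small enough that $c\leq\tau_{2}$ for all large $n,T$ (this is possible because $\tau_{2}\gtrsim 1$). Let $u=(0,(n-1)^{-1/2},\dots,(n-1)^{-1/2})'\in\RR^{n}$ and $v=(T^{-1/2},\dots,T^{-1/2})'\in\RR^{T}$ be unit vectors, and define
\[
M_{0}=(\tau_{0}+c)\,uv',\qquad M_{1}=M_{0}+c\,e_{1}e_{1}',
\]
where $e_{1}$ is the first standard basis vector of the appropriate dimension. Then $M_{0}$ has rank one with $\sigma_{1}(M_{0})=\tau_{0}+c\geq\tau_{0}$ and $(M_{0})_{1,t}=0$ for every $t$; also $\|M_{0}\|_{\infty}=(\tau_{0}+c)/\sqrt{(n-1)T}\leq\kappa$ for $n,T$ large (any $\tau_{0}$ for which $\Mcal^{(1)}$ is nontrivial must satisfy $\tau_{0}\lesssim\sqrt{nT}$). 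Hence $M_{0}\in\Mcal^{(1)}$. For $M_{1}$: $\rank M_{1}\leq 2$; Weyl's inequality gives $\sigma_{1}(M_{1})\geq\sigma_{1}(M_{0})-c=\tau_{0}\geq\tau_{1}$ and $\sigma_{2}(M_{1})\leq\sigma_{2}(M_{0})+\|c\,e_{1}e_{1}'\|=c\leq\tau_{2}$; and $\|M_{1}\|_{\infty}\leq\max\{\|M_{0}\|_{\infty},c\}\leq\kappa$. Thus $M_{1}\in\Mcal^{(2)}$. By construction $(M_{0})_{1,1}=0$, $(M_{1})_{1,1}=c$, and $M_{0}$ and $M_{1}$ agree on every other entry.

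\textbf{Completion.} Since $M_{0}$ and $M_{1}$ differ only at $(1,1)$ and $X_{-1,-1}$ is by definition the data matrix with entry $(1,1)$ set to zero, the law of $X_{-1,-1}$, and therefore that of $\Ical_{*}=\Ical_{*}(X_{-1,-1})$, is identical under $\PP_{M_{0}}$ and $\PP_{M_{1}}$. By hypothesis and Markov's inequality, $\PP_{M_{0}}(|\Ical_{*}|\geq c/2)\leq 2\,\EE_{M_{0}}|\Ical_{*}|/c=o(1)$. On the event $\{|\Ical_{*}|<c/2\}$ the interval cannot contain both $0$ and $c$, so
\[
\oneb\{(M_{0})_{1,1}\in\Ical_{*}\}+\oneb\{(M_{1})_{1,1}\in\Ical_{*}\}\leq 1+\oneb\{|\Ical_{*}|\geq c/2\}.
\]
Taking expectations under the common law and using $M_{0},M_{1}\in\Mcal^{(2)}$,
\[
2\inf_{M\in\Mcal^{(2)}}\PP_{M}(M_{1,1}\in\Ical_{*})\leq\PP_{M_{0}}((M_{0})_{1,1}\in\Ical_{*})+\PP_{M_{1}}((M_{1})_{1,1}\in\Ical_{*})\leq 1+o(1),
\]
and passing to $\limsup$ as $n,T\to\infty$ gives the claim.

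\textbf{Main obstacle.} The probabilistic half of the argument is a one-line Markov bound once the two-point construction is in place, so essentially all the work is in the construction itself: showing that one can add a pure $(1,1)$-spike to a rank-one mean matrix without violating the rank, spectral, or sup-norm constraints of $\Mcal^{(2)}$. This is exactly where the hypothesis $\tau_{2}\gtrsim 1$ is indispensable --- it gives the slack to introduce an $O(1)$-sized second factor --- and it is the reason the phenomenon is specific to situations with nontrivial weak factors.
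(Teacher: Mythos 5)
Your proof is correct and the core mechanism is the same one the paper uses, but the route is organizationally different. The paper proves Theorem~\ref{thm: bad coverage} as a corollary of Theorem~\ref{thm: adaptivitity SC main}, choosing $\eta=1/2$ and plugging in $\alpha_{n,T}=1-\inf_{\Mcal^{(2)}}\PP_{M}(\cdot)$; the two-point construction with $\tM=\bM+c_{0}\,e_{1}e_{1}'$, which induces the same law on $X_{-1,-1}$ because only the unobserved $(1,1)$ entry is perturbed, lives inside the proof of Theorem~\ref{thm: adaptivitity SC main}. You instead prove Theorem~\ref{thm: bad coverage} directly, with a concrete choice of $M_{0}$ and the standard Markov-plus-averaging step at the end. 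The direct route is more elementary and self-contained; the paper's route buys generality (Theorem~\ref{thm: adaptivitity SC main} lower-bounds the expected length at \emph{every} $\bM\in\Mcal_{*}^{(1)}$, not one specially chosen point, and is reused elsewhere).

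One loose end worth flagging: you justify $\|M_{0}\|_{\infty}=(\tau_{0}+c)/\sqrt{(n-1)T}\leq\kappa$ for large $n,T$ by invoking nonemptiness of $\Mcal^{(1)}$, but that only gives $\tau_{0}\leq\kappa\sqrt{nT}$. If $\tau_{0}=\kappa\sqrt{nT}\bigl(1-o(1)\bigr)$, then $(\tau_{0}+c)/\sqrt{(n-1)T}=\kappa\sqrt{n/(n-1)}+o(1)$ exceeds $\kappa$, because your loading vector $u$ concentrates the mass on $n-1$ coordinates. So the construction implicitly needs $\tau_{0}$ bounded away from the extreme $\kappa\sqrt{nT}$ by a constant factor. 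The paper has the same implicit restriction (its argument needs $\Mcal_{*}^{(1)}$ with $\eta=1/2$ to be nonempty, roughly $\tau_{0}\lesssim\kappa\sqrt{nT}/3$), so this is not a defect specific to your proof, but it is a place where you asserted more than the stated hypotheses actually deliver; a one-line added assumption or an explicit remark would close it.
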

Theorem \ref{thm: bad coverage} reveals the danger of some popular
methods in practice. Consider the case with $\tau_{0},\tau_{1},\tau_{2}\asymp\sqrt{nT}$
(so $\tau_{0},\tau_{2}\gtrsim1$ is satisfied). Recall the procedure
in Figure \ref{fig: PCA estimated num factors} under the simplified
assumption that the number of factors is either one or two. By classical
results, the PCA estimate is asymptotically normal over $\Mcal^{(1)}$
with a standard error shrinking to zero. Hence, the overall procedure
in Figure \ref{fig: PCA estimated num factors}, which uses these
standard errors, produces a random interval with shrinking length
on $\Mcal^{(1)}$. However, Theorem \ref{thm: bad coverage} indicates
that precisely due to its shrinking width on $\Mcal^{(1)}$, the procedure
in Figure \ref{fig: PCA estimated num factors} cannot have uniform
coverage over $\Mcal^{(2)}$ unless the number of factors (including
weak ones) can be consistently determined. Since this result holds
regardless of which pre-test is used, developing better tests for
the number of factors might not significantly improve inference quality.
Moreover, using methods that do not require the number of factors
as an input (e.g., nuclear-norm penalized methods) does not solve
the problem either since Theorem \ref{thm: bad coverage} does not
assume a specific form for $\Ical_{*}$. In this regard, Theorem \ref{thm: bad coverage}
also serves as a simple check on the uniform validity of a given procedure:
if there exists two different numbers $k_{1}$ and $k_{2}$ such that
the procedure under consideration gives shrinking confidence intervals
in both cases ((1) when there are $k_{1}$ strong factors and no weak
factors and (2) when there are $k_{2}$ strong factors and no weak
factors), then this procedure does not have uniform validity. %

Theorems \ref{thm: adaptivity SC 1} and \ref{thm: bad coverage}
indicate an inherent tradeoff between efficiency and validity. On
one hand, Theorem \ref{thm: adaptivity SC 1} states that procedures
with validity over $\Mcal^{(2)}$ cannot provide accurate inference
on $\Mcal^{(1)}$. On the other hand, Theorem \ref{thm: bad coverage}
makes the equivalent claim that procedures that provide accurate inference
on $\Mcal^{(1)}$ cannot guarantee uniform validity on $\Mcal^{(2)}$.
Therefore, requiring validity on $\Mcal^{(2)}\backslash\Mcal^{(1)}$
necessarily reduces the inference efficiency. %

Since Theorem \ref{thm: adaptivity SC 1} only involves the worst-case
width (or power) on $\Mcal^{(1)}$. One might wonder whether robust
procedures could still have decent efficiency on average over $\Mcal^{(1)}$
despite the bad worst-case performance. We now show that this is not
the case: lack of efficiency occurs at many points in $\Mcal^{(1)}$. 
\begin{thm}
\label{thm: adaptivitity SC main}For any $\eta\in(0,1)$, define
\[
\Mcal_{*}^{(1)}=\left\{ A\in\RR^{n\times T}:\ \|A\|_{\infty}\leq\kappa(1-\eta),\ \sigma_{1}(A)\geq\tau_{0}(1+\eta),\ \sigma_{2}(A)=0\right\} .
\]
Then
\[
\inf_{CI\in\Phi(\Mcal^{(2)})}\inf_{M\in\Mcal_{*}^{(1)}}\EE_{M}|CI(X_{-1,-1})|\geq(1-2\alpha)\min\{\kappa\eta,\tau_{0}\eta,\tau_{2}\}.
\]
\end{thm}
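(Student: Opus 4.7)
The plan is to exploit the fact that the missing entry $X_{1,1}$ makes the law of the observed data $X_{-1,-1}$ completely insensitive to the value of $M_{1,1}$, and to carry out a pointwise two-point argument that works for every $M \in \Mcal_{*}^{(1)}$ individually (not just in the worst case as in Theorem \ref{thm: adaptivity SC 1}).

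Fix an arbitrary $M \in \Mcal_{*}^{(1)}$ and set $\delta = \min\{\kappa\eta,\tau_{0}\eta,\tau_{2}\}$. First I would construct the perturbation $M_{+} := M + \delta\, e_{1}e_{1}'$, where $e_{1}$ denotes the first standard basis vector in $\RR^{n}$ and $\RR^{T}$; this modifies only the $(1,1)$ entry of $M$. The role of the three slack parameters in the definition of $\Mcal_{*}^{(1)}$ is exactly to guarantee $M_{+} \in \Mcal^{(2)}$: the $\infty$-norm bound $\|M_{+}\|_{\infty} \leq \kappa(1-\eta) + \delta \leq \kappa$ uses $\delta \leq \kappa\eta$; Weyl's inequality gives $\sigma_{1}(M_{+}) \geq \sigma_{1}(M) - \delta \geq \tau_{0}(1+\eta) - \tau_{0}\eta = \tau_{0} \geq \tau_{1}$; and the other side of Weyl gives $\sigma_{2}(M_{+}) \leq \sigma_{2}(M) + \sigma_{1}(\delta e_{1}e_{1}') = \delta \leq \tau_{2}$. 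The same three checks (with $\delta$ replaced by $0$) also place $M$ itself in $\Mcal^{(2)}$, so both parameters are legitimate targets of coverage.

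The punchline is that $M$ and $M_{+}$ differ only in the $(1,1)$ entry, so $M_{-1,-1} = (M_{+})_{-1,-1}$ and the distributions $\PP_{M}$ and $\PP_{M_{+}}$ of the observed data $X_{-1,-1}$ are literally identical. For any $CI \in \Phi(\Mcal^{(2)})$, the coverage guarantees at $M$ and at $M_{+}$ translate, under this common law, into $\PP_{M}(M_{1,1} \in CI) \geq 1-\alpha$ and $\PP_{M}(M_{1,1}+\delta \in CI) \geq 1-\alpha$. A union bound then yields $\PP_{M}(\{M_{1,1},\,M_{1,1}+\delta\} \subseteq CI) \geq 1-2\alpha$, and on that event $|CI| \geq \delta$ because $CI$ is an interval. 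Therefore $\EE_{M}|CI| \geq (1-2\alpha)\delta$, and since this inequality holds for every $M \in \Mcal_{*}^{(1)}$, taking the infimum over $M$ and then over $CI$ gives the claimed bound.

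I do not foresee any substantial obstacle. The exchangeability between $M$ and $M_{+}$ as data-generating parameters is immediate from the missing-data structure, and the three slack terms in the definition of $\Mcal_{*}^{(1)}$ are precisely calibrated to cancel the three ways a rank-one perturbation of magnitude $\delta$ could push $M_{+}$ outside $\Mcal^{(2)}$ (violating $\|\cdot\|_{\infty}\leq\kappa$, $\sigma_{1}\geq\tau_{1}$, or $\sigma_{2}\leq\tau_{2}$). The only thing worth emphasizing is that the bound is pointwise in $M$ rather than a sup-level statement; this is why I fix $M$ at the outset and keep the same $(M, M_{+})$ pair throughout the argument, rather than searching for an adversarial pair as in a standard Le Cam two-point reduction.
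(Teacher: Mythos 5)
Your proof is correct and is essentially the same as the paper's: fix $M\in\Mcal_{*}^{(1)}$, perturb only the $(1,1)$ entry by $\delta=\min\{\kappa\eta,\tau_0\eta,\tau_2\}$, use the $\infty$-norm slack and Weyl's inequality to show both $M$ and $M_+$ lie in $\Mcal^{(2)}$, observe that $\PP_M=\PP_{M_+}$ because the data is $X_{-1,-1}$, and conclude $|CI|\geq\delta$ with probability at least $1-2\alpha$ via a union bound over coverage at the two parameter values. The paper writes the perturbation as a matrix with $c_0$ in the top-left corner rather than as $\delta e_1e_1'$ and cites a handbook for the singular-value perturbation bound, but the logic is identical.
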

Since $\eta$ can be chosen arbitrarily, $\Mcal_{*}^{(1)}$ represents
quite many (if not most) points in $\Mcal^{(1)}$. Theorem \ref{thm: adaptivitity SC main}
states that the efficiency is bad at every point in $\Mcal_{*}^{(1)}$
for any robust confidence interval. Therefore, the impossibility result
in Theorem \ref{thm: adaptivity SC 1} is not driven by only a few
unlucky points in $\Mcal^{(1)}$. This means that there is fundamental
difficulty in entry-wise learning when the number of factors is unknown.
Hence, in order to achieve efficient inference, the number of factors
needs to be given a priori. 
\begin{rem}[Estimation adaptivity vs inference adaptivity]
We note that the lack of adaptivity implied by Theorems \ref{thm: adaptivity SC 1},
\ref{thm: bad coverage} and \ref{thm: adaptivitity SC main} is only
about inference. It is entirely possible to have adaptive estimation
when the number of factors is unknown. For example, consider the procedure
in Figure \ref{fig: PCA estimated num factors}. This procedure (after
proper truncation if needed) still provides an estimate with the optimal
rate\footnote{When all the factors are strong (e.g., $\Mcal^{(1)}$ with $\tau_{0}\asymp\sqrt{nT}$),
ignoring the weak factors leads to the best rate. When there are weak
factors, no consistency is possible anyway due to results in Section
\ref{subsec: rate SC}; hence, one can simply use zero as the estimate
via truncation (similar to (\ref{eq: estimator adaptivity})) in this
case. } and hence is an adaptive estimator. However, lack of adaptivity for
inference reflects the fact that we cannot learn from the data what
this optimal rate is. %
\end{rem}

\section{\label{sec: panel reg}Panel regression with interactive fixed effects}

\subsection{Can we achieve the optimal rate without assuming strong factors?}

The panel data model with interactive fixed effects \citep[e.g., ][]{pesaran2006cross,bai2009panel}
assumes
\[
Y_{i,t}=L_{i}'F_{t}+X_{i,t}\beta+\varepsilon_{i,t},
\]
where the observed data is $\{(Y_{i,t},X_{i,t})\}_{1\leq i\leq n,\ 1\leq t\leq T}$.
Here, $\varepsilon_{i,t}$ is i.i.d across $(i,t)$ following $N(0,\sigma_{\varepsilon}^{2})$
and the factor structure $L_{i}'F_{t}$ is the fixed effect with $L_{i},F_{t}\in\RR^{r_{0}}$.
For simplicity, here $X_{i,t}$ and $\beta$ are scalars. The main
requirement of $X_{i,t}$ is that it cannot be absorbed by the fixed
effects. Since the fixed effects have a factor structure, we assume
that $X_{i,t}$ is a factor structure plus non-negligible noise:
\[
X_{i,t}=\alpha_{i}'g_{t}+u_{i,t},
\]
where $\alpha_{i},g_{t}\in\RR^{r_{1}}$ and $u_{i,t}$ is i.i.d across
$(i,t)$ following $N(0,\sigma_{u}^{2})$. We assume that $u$ and
$\varepsilon$ are mutually independent. Factor structures in the
regressor $X_{i,t}$ have been a common assumption \citep[see e.g.,][]{pesaran2006cross,moon2014linear,zhu2017high,chernozhukvo2018panel}.
Under these assumptions, we can write the model as 
\begin{equation}
Y=M+X\beta+\varepsilon\text{ and }X=D+u,\label{eq: panel regression}
\end{equation}
where $M,D,\varepsilon,u\in\RR^{n\times T}$, $\rank M\leq r_{0}$
and $\rank D\leq r_{1}$. The distribution of the data $(Y,X)$ is
thus indexed by $\theta=(M,D,\sigma_{\varepsilon},\sigma_{u},\beta)$.
We consider the following parameter space:
\[
\Theta=\left\{ \theta=(M,D,\sigma_{\varepsilon},\sigma_{u},\beta):\ \ \rank M\leq r_{0},\ \rank D\leq r_{1},\ \{\sigma_{\varepsilon},\sigma_{u}\}\subset[\kappa^{-1},\kappa],\ |\beta|\leq\kappa\right\} ,
\]
where $\kappa>0$ is a constant and $r_{1},r_{2}>0$ are fixed. The
requirement of $\{\sigma_{\varepsilon},\sigma_{u}\}\subset[\kappa^{-1},\kappa]$
is merely saying that $\sigma_{\varepsilon}$ and $\sigma_{u}$ are
bounded away from zero and infinity. For simplicity, we shall also
require that $\beta$ be bounded. Notice that we do not require boundedness
of $\|M\|_{\infty}$ and $\|D\|_{\infty}$; it turns out that such
a requirement will not be needed. We note that $r_{0}$ and $r_{1}$
are only upper bounds on the number of factors, instead of the exact
number of factors. Moreover, there is no requirement on the relative
magnitude between $n$ and $T$. 

The most important feature of $\Theta$ is that there is no assumption
at all regarding the strength of the factors. For estimating $\beta$,
\citet{bai2009panel} showed that when all the factors are strong
and the number of factors is known (or consistently estimable), one
can achieve the rate $(nT)^{-1/2}$. \citet{moon2014linear} showed
that when all the factors are strong, overstating the number of factors
does not have any impact on the rate for estimating $\beta$. These
results still leave two open questions that are quite relevant in
practice:
\begin{itemize}
\item If the number of factors is known and some factors might be weak,
would this create a problem for learning $\beta$? 
\item Furthermore, if there are uncertainties regarding both the number
of factors and factor strengths, would there be additional difficulties
learning $\beta$? 
\end{itemize}
We now provide an estimator that achieves the rate $(nT)^{-1/2}$
uniformly over $\Theta$. As a result, lack of knowledge on the number
of factors and potentially weak factors do not create any problem
for the rate on learning $\beta$. Notice that the rate $(nT)^{-1/2}$
is minimax optimal by a simple two-point argument. 

The basic idea of our estimator is as follows. The assumption in (\ref{eq: panel regression})
implies a factor structure in $Y$: 
\[
Y=(M+D\beta)+V,
\]
where $\rank(M+D\beta)\leq r_{0}+r_{1}$ and $V=u\beta+\varepsilon$.
Thus, we can identify $\beta$ as $\beta=\EE\trace(V'u)/(nT\sigma_{u}^{2})$.
Since both $V$ and $u$ are idiosyncratic parts in $Y$ and $X$,
respectively, we can estimate them by the typical low-rank estimation
strategies. 

Without loss of generality, we assume that $T\geq n$; if $T<n$,
then we flip our data from $(Y,X)$ to $(Y',X')$. For any matrix
$A\in\RR^{n\times r}$, we define $\Pi_{A}=I_{n}-P_{A}$ and $P_{A}=A(A'A)^{\dagger}A'$,
where $^{\dagger}$ denoting the Moore-Penrose pseudo-inverse. Let
$\halpha\in\arg\min_{A\in\RR^{n\times r_{1}}}\trace(X'\Pi_{A}X)$
and $\hLambda\in\arg\min_{A\in\RR^{n\times k}}\trace(Y'\Pi_{A}Y)$
with $k=r_{0}+r_{1}$, Notice that $\halpha$ and $\hLambda$ are
simply the eigenvectors corresponding to the largest $r_{1}$ and
$k$ eigenvalues of $XX'$ and $YY'$, respectively. Then the estimator
is defined as
\begin{equation}
\hbeta:=\hbeta(X,Y)=\frac{n-r_{1}}{n-\hr}\cdot\frac{\trace(Y'\Pi_{\hLambda}\Pi_{\halpha}X)}{\trace(X'\Pi_{\halpha}X)},\label{eq: panel estimator}
\end{equation}
where $\hr=k+r_{1}-\trace(P_{\hLambda}P_{\halpha})$. 

We make three comments on the estimator. First, due to the identification
condition of $\beta=\EE\trace(V'u)/(nT\sigma_{u}^{2})$, we can view
the estimation problem as learning the expected conditional covariance
\citep[e.g.,][]{newey_fast_rate1801.09138,chernozhukov2018learning,chernozhukov1802.08667}
and thus construct a solution in a similar spirit. The means of $X$
and $Y$ are high-dimensional structures: $A$ and $M+D\beta$. Since
the trace operation defines an inner product, $\trace(\EE V'u)$ is
a linear functional of the covariance $\EE[Y-(M+D\beta)]'[X-A]$.
The simple plug-in approach is to construct an estimate for $M+D\beta$
and $A$ and replace $\EE(\cdot)$ with $(nT)^{-1}\trace(\cdot)$.
Under PCA, the estimate for $Y-(M+D\beta)$ and $X-D$ are $\Pi_{\hLambda}Y$
and $\Pi_{\halpha}X$, respectively. 

Second, the quantity $(n-r_{1})$ acts as bias correction for $\trace(X'\Pi_{\halpha}X)$
in (\ref{eq: panel estimator}). Although the random matrix theory
can gives us $\trace(X'\Pi_{\halpha}X)=\sigma_{u}^{2}nT+O_{P}(n+T)$,
this bound is not enough as it only yields 
\[
(nT)^{-1}\trace(X'\Pi_{\halpha}X)=\sigma_{u}^{2}+O_{P}(\min\{n^{-1},T^{-1}\}).
\]
To achieve the rate $(nT)^{-1/2}$, we would need an estimate for
$\sigma_{u}^{2}$ at the rate $O_{P}((nT)^{-1/2})$, which is faster
than $O_{P}(\min\{n^{-1},T^{-1}\})$ unless $n\asymp T$. We derive
a more accurate characterization by showing 
\[
[(n-r_{1})T]^{-1}\trace(X'\Pi_{\halpha}X)=\sigma_{u}^{2}+O_{P}((nT)^{-1/2}).
\]
Therefore, $[T(n-r_{1})]^{-1}\trace(X'\Pi_{\halpha}X)$ has a strictly
smaller remainder term unless $T\asymp n$. Similarly, the quantity
$n-\hr$ acts as bias correction for $\trace(Y'\Pi_{\hLambda}\Pi_{\halpha}X)$
since one can show $\trace(Y'\Pi_{\hLambda}\Pi_{\halpha}X)=(n-\hr)T\sigma_{u}^{2}\beta+O_{P}(\sqrt{nT})$. 

Third, a key step in analyzing the numerator in (\ref{eq: panel estimator})
is to show $\|\Pi_{\halpha}D\|_{F}^{2}=O_{P}((nT)^{1/2})$ regardless
of the factor strength. To see why this is crucial, we note that the
best guaranteed rate for $\|D-\hD\|_{F}^{2}$ is $\max\{n,T\}$ for
any estimator $\hD$, \citep[see e.g.,][]{rohde2011estimation,candes2011tight}.
This rate is worse than $(nT)^{1/2}$ unless $T\asymp n$. One novelty
of our analysis is to show that although $\|\Pi_{\halpha}X\|_{F}^{2}=\|\Pi_{\halpha}(D+u)\|_{F}^{2}=O_{P}(\max\{n,T\})$,
we can separate it into $\|\Pi_{\halpha}D\|_{F}^{2}=O_{P}((nT)^{1/2})$
and $\|\Pi_{\halpha}u\|_{F}^{2}=O_{P}(\max\{n,T\})$. 

The condition $T\geq n$ is motivated by the following insight. Under
our factor structure, it suffices to estimate either the factors or
the factor loadings, not both. Hence, perhaps we should estimate the
one with lower dimensionality. If $T\gg n$, then the factor loading
whose dimensionality is proportional to $n$ would be easier to estimate,
compared to factors whose dimensionality scales with $T$. 
\begin{thm}
\label{thm: upper bnd panel}Consider the estimator $\hbeta$ in (\ref{eq: panel estimator}).
Assume that $T\geq n$. Then for any $\eta\in(0,1)$, there exists
a constant $C_{\eta}>0$ such that 
\[
\sup_{\theta\in\Theta}\PP_{\theta}\left(\sqrt{nT}\left|\hbeta-\beta\right|>C_{\eta}\right)\leq\eta.
\]
\end{thm}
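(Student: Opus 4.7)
Write $A = M + D\beta$ and $V = u\beta + \varepsilon$ so that $Y = A + V$ with $\rank(A)\leq k := r_{0}+r_{1}$; the matrix $V$ has i.i.d.\ Gaussian entries with $\EE V_{i,t}u_{i,t}=\beta\sigma_{u}^{2}$. The identity $\beta = \EE\trace(V'u)/(nT\sigma_{u}^{2})$ motivates the estimator: the numerator of (\ref{eq: panel estimator}) is a plug-in for $\trace(V'u)$ via $V\approx\Pi_{\hLambda}Y$ and $u\approx\Pi_{\halpha}X$, the denominator is a plug-in for $\trace(u'u)$, and the factors $(n-r_{1})$ and $(n-\hr)=\trace(\Pi_{\hLambda}\Pi_{\halpha})$ are deterministic rescalings that match the traces of the relevant projectors.

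Expanding $\trace(Y'\Pi_{\hLambda}\Pi_{\halpha}X)$ as a sum of four bilinear traces (one per choice of $\{A,V\}\times\{D,u\}$), the leading term is handled via a Hanson--Wright-style concentration, giving $\trace(V'\Pi_{\hLambda}\Pi_{\halpha}u) = \beta\sigma_{u}^{2}T(n-\hr) + O_{P}(\sqrt{nT})$. Similarly $\trace(X'\Pi_{\halpha}X) = \sigma_{u}^{2}T(n-r_{1})+O_{P}(\sqrt{nT})$ is the sharpened denominator concentration noted above the theorem. The three cross terms (those involving at least one signal matrix $A$ or $D$) are to be shown $O_{P}(\sqrt{nT})$ via Cauchy--Schwarz combined with the two key lemmas
\[
\|\Pi_{\hLambda}A\|_{F}^{2} = O_{P}\bigl(\sqrt{nT}\bigr),\qquad \|\Pi_{\halpha}D\|_{F}^{2} = O_{P}\bigl(\sqrt{nT}\bigr),
\]
uniformly in $\theta\in\Theta$, together with the fact that $\Pi_{\hLambda}A$ and $\Pi_{\halpha}D$ have low rank (at most $k$ and $r_{1}$, respectively), which enables sharp nuclear-norm-times-spectral-norm bounds when paired with a noise factor.

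The heart of the argument, and the main obstacle, is establishing the two displayed bounds without any factor-strength assumption. Standard Davis--Kahan perturbation fails because it requires an eigengap in $D$ (or $A$) that can vanish under weak factors. Instead, exploit PCA optimality: since $\halpha$ maximises $\|P_{\cdot}X\|_{F}^{2}$ over $n\times r_{1}$ orthonormal matrices, comparison with $P_{L}$ for $L$ an orthonormal basis of the column space of $D$ and expansion of $X = D + u$ give
\[
\|\Pi_{\halpha}D\|_{F}^{2} \leq 2\bigl|\trace(u'\Pi_{\halpha}D)\bigr| + \bigl|\|P_{\halpha}u\|_{F}^{2} - \|P_{L}u\|_{F}^{2}\bigr|,
\]
where the second summand is $O_{P}(\sqrt{nT})$ by extremal singular-value bounds for Gaussian matrices (e.g., Corollary 5.35 of Vershynin). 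The first summand requires a decoupling argument in the spirit of Lemma \ref{lem: bound key 1}: the range of $\Pi_{\halpha}D$ is random and driven by $u$, so the inner product cannot be linearised with a fixed direction; instead one reduces to a quadratic form in $u$ restricted to a rank-$r_{1}$ subspace via conditioning and a covering of the relevant low-rank subspaces, yielding a bound of order $(nT)^{1/4}\|\Pi_{\halpha}D\|_{F}$. Solving the resulting quadratic inequality then gives $\|\Pi_{\halpha}D\|_{F}^{2} = O_{P}(\sqrt{nT})$; the argument for $\hLambda$ and $A$ is symmetric. Finally, assembling the numerator and denominator and using the explicit $(n-r_{1})/(n-\hr)$ rescaling to align the leading Gaussian trace forms yields $\hbeta - \beta = O_{P}(\sqrt{nT})/(nT\sigma_{u}^{2}) = O_{P}(1/\sqrt{nT})$ uniformly in $\theta\in\Theta$.
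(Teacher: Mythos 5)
Your overall architecture matches the paper's: the same decomposition of the numerator into four bilinear traces, the same two key lemmas bounding $\|\Pi_{\hLambda}(M+D\beta)\|_{F}^{2}$ and $\|\Pi_{\halpha}D\|_{F}^{2}$ by $O_{P}(\sqrt{nT})$, the same use of PCA optimality (compare $\Pi_{\halpha}$ against an oracle projector onto the column space of $D$) to generate a quadratic inequality in $\|\Pi_{\halpha}\alpha\|_{F}$, and the same trace-duality-plus-projector-nuclear-norm bookkeeping for $J_{4}=\trace(V'\Pi_{\hLambda}\Pi_{\halpha}u)$ and the denominator bias correction. Your comparison inequality
\[
\|\Pi_{\halpha}D\|_{F}^{2}\leq 2\left|\trace(u'\Pi_{\halpha}D)\right|+\left|\|P_{\halpha}u\|_{F}^{2}-\|P_{L}u\|_{F}^{2}\right|
\]
is exactly the right starting point and is equivalent to what the paper does in Steps 1--2 of its Lemma on $\Pi_{\halpha}$.

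The genuine divergence is in how you propose to control the cross term $\trace(u'\Pi_{\halpha}D)$, which you correctly flag as the heart of the matter. You suggest a decoupling/covering argument over low-rank subspaces (in the spirit of the paper's Lemma on $\varepsilon_{(1)}'\varepsilon'\Delta_{L}$), aiming for a bound of order $(nT)^{1/4}\|\Pi_{\halpha}D\|_{F}$. The paper instead exploits the \emph{factor structure} $D=\alpha g'$: since the row space of $\Pi_{\halpha}D=(\Pi_{\halpha}\alpha)g'$ is always contained in the fixed $r_{*}$-dimensional column span of $g$, a single Cauchy--Schwarz gives
\[
\left|\trace(u'\Pi_{\halpha}\alpha g')\right|=\left|\trace\bigl((ug)'\Pi_{\halpha}\alpha\bigr)\right|\leq\|ug\|_{F}\cdot\|\Pi_{\halpha}\alpha\|_{F},
\]
and $ug$ has only $r_{*}\leq r_{1}$ columns, so $\|ug\|_{F}=O_{P}(\sqrt{nT})$ by a one-line second-moment computation. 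With the normalization $g'g/T=I_{r_{*}}$ this yields $\left|\trace(u'\Pi_{\halpha}D)\right|\leq O_{P}(\sqrt{n})\cdot\|\Pi_{\halpha}D\|_{F}$, which (since $T\geq n$) is \emph{sharper} than your proposed $(nT)^{1/4}$ factor and closes the quadratic inequality directly. In short: no decoupling and no covering of low-rank subspaces is needed for this step, because the dependence of $\halpha$ on $u$ is rendered harmless once one factors out the deterministic row direction $g$. Your covering-based route would plausibly work (a net of size $e^{O(n)}$ gives the same $\sqrt{n}$ after a union bound over unit-Frobenius matrices of the form $\Pi_{P}D/\|\Pi_{P}D\|_{F}$, whose row spaces are all contained in $\mathrm{colspan}(g)$), but it is substantially more machinery than required and you have not carried it out; the structural Cauchy--Schwarz is the simpler and, as it happens, the intended argument.
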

We note that the requirement of $T\geq n$ is completely innocuous
and can be removed if we consider the following estimator
\[
\tbeta=\begin{cases}
\hbeta(X,Y) & \text{if }T\geq n\\
\hbeta(X',Y') & \text{otherwise}.
\end{cases}
\]

Theorem \ref{thm: upper bnd panel} formally establishes the uniform
rate of $(nT)^{-1/2}$ over $\Theta$. Hence, whether factors are
strong or weak and whether the number of factors is exactly known
would not prevent us from achieving the rate $(nT)^{-1/2}$. 

\subsection{Can the currently known asymptotic variance hold without strong factors?}

Since the minimax rate of learning $\beta$ does not depend on the
strong factor assumption, the natural question is whether the strong
factor assumption is not important at all for estimation and inference.
Unfortunately, the answer is no. We shall show that (1) the efficiency
of inference on $\beta$ crucially depends on the strong factor condition
and (2) there is lack of adaptivity in the factor strength, resulting
in a tradeoff between efficiency and robustness. 

From the perspective of semiparametric estimation, there is a good
reason to suspect that the strong factor condition might affect the
inference efficiency. We shall view the panel regression problem in
(\ref{eq: panel regression}) as a semiparametric problem, which would
yield a natural semiparametric lower bound for the asymptotic variance
in estimating $\beta$. However, we then realize that the typical
asymptotic variance in the literature \citep[e.g.,][]{bai2009panel,moon2014linear}
assuming strong factors can be much smaller than this lower bound.
This leads us to suspect that the strong factor condition might play
a role similar to strong parametric restrictions on nonparametric
components of semiparametric models. 

To provide an analogy, consider the partial linear model with observations
$(Y_{i},Z_{i},W_{i})$ generated by $Y_{i}=f(W_{i})+Z_{i}\beta+\varepsilon_{i}$
and $Z_{i}=g(W_{i})+u_{i}$; under regularity conditions, the asymptotic
variance of estimating $\beta$ is bounded below by $\EE\varepsilon_{i}^{2}/\EE u_{i}^{2}$
when $f$ and $g$ are nonparametric functions or functions with high-dimensional
parameters, see e.g., \citep{chernozhukov1802.08667,newey2018cross,jankova2018semiparametric}.
Now we recall the model in (\ref{eq: panel regression}): $Y=M+X\beta+\varepsilon$
and $X=D+u$, where $M,D\in\RR^{n\times T}$ are high-dimensional
nuisance parameters. We can view $M$ and $D$ as $f(W_{i})$ and
$g(W_{i})$ in the partial linear model, respectively. From this perspective,
we would expect the asymptotic variance for estimating $\beta$ to
be at least $\sigma_{\varepsilon}^{2}\sigma_{u}^{-2}$ in general.
However, the asymptotic variance derived in \citet{bai2009panel}
and \citet{moon2014linear} is 
\[
\frac{\sigma_{\varepsilon}^{2}}{\sigma_{u}^{2}+\trace(\Pi_{M'}D'\Pi_{M}D)/(nT)},
\]
where $\Pi_{M}=I_{n}-M(M'M)^{\dagger}M'$ and $\Pi_{M'}=I_{T}-M'(MM')^{\dagger}M$. 

To formally address the efficiency problem, we again adopt the framework
of adaptivity. For simplicity, we assume that $\sigma_{u}=\sigma_{\varepsilon}=1$
in (\ref{eq: panel regression}). We consider the parameter space
\[
\Theta^{(2)}=\left\{ \theta=(M,D,1,1,\beta):\ \rank M\leq2,\ \rank D=1,\ |\beta|\leq1\right\} .
\]
Then we focus on adaptivity on a smaller space in which the strong
factor condition holds:
\begin{multline*}
\Theta^{(1)}=\Bigl\{\theta=(M,D,1,1,\beta)\in\Theta^{(2)}:\ \rank M=1,\ M'D=0,\ MD'=0,\\
\|M\|_{F}\geq\kappa_{1}\sqrt{nT},\ \|D\|_{F}\geq\kappa_{2}\sqrt{nT}\Bigr\},
\end{multline*}
where $\kappa_{1},\kappa_{2}>0$ are constants. The difference between
$\Theta^{(1)}$ and $\Theta^{(2)}$ is that $\Theta^{(2)}$ allows
for potentially weak factors and uncertainty in the number of factors
($\rank M$ can be either 1 or 2), while $\Theta^{(1)}$ only considers
known number of factors and assumes all the factors are strong. Our
analysis will focus on the question of whether robust confidence intervals
(uniform validity on $\Theta^{(2)}$) has worse efficiency on $\Theta^{(1)}$
than non-robust confidence intervals (uniform validity only on $\Theta^{(1)}$). 

By \citet{bai2009panel} and \citet{moon2014linear} (among others),
the least-square estimator $\LSbeta$ (i.e., $(\LSbeta,\hA)=\arg\min_{\beta\in\RR,\ A\in\RR^{n\times T},\ \rank A\leq2}\|Y-A-X\beta\|_{F}^{2}$)
satisfies
\[
\frac{\sqrt{nT}(\LSbeta(X,Y)-\beta)}{\sigma(\theta)}\overset{d}{\rightarrow}N(0,1),
\]
over $\theta\in\Theta^{(1)}$, where for $\theta=(M,D,\sigma_{\varepsilon},\sigma_{u},\beta)$,
\[
\sigma(\theta):=\frac{\sigma_{\varepsilon}}{\sqrt{\sigma_{u}^{2}+\trace(\Pi_{M'}D'\Pi_{M}D)/(nT)}}.
\]

For $\theta\in\Theta^{(1)}$, we have that 
\[
\sigma(\theta)=\frac{\sigma_{\varepsilon}}{\sqrt{\sigma_{u}+\trace(\Pi_{M'}D'\Pi_{M}D)/(nT)}}=\frac{1}{\sqrt{1+\|D\|_{F}^{2}/(nT)}}\leq\frac{1}{\sqrt{1+\kappa_{2}^{2}}}.
\]

Therefore, a natural 95\%-confidence interval for parameters in $\Theta^{(1)}$
is 
\begin{align}
CI_{*}(X,Y) & =\Bigl[\LSbeta(X,Y)-1.96(nT)^{-1/2}(1+\kappa_{2}^{2})^{-1/2},\nonumber \\
 & \qquad\qquad\qquad\qquad\LSbeta(X,Y)+1.96(nT)^{-1/2}(1+\kappa_{2}^{2})^{-1/2}\Bigr].\label{eq: CI usual}
\end{align}

Existing results imply that
\[
\liminf_{n,T\rightarrow\infty}\inf_{\theta\in\Theta^{(1)}}\PP_{\theta}\left(\beta\in CI_{*}(X,Y)\right)\geq95\%.
\]

In other words, we have that
\begin{equation}
\limsup_{n,T\rightarrow\infty}\frac{\Lcal(\Theta^{(1)})}{3.92(nT)^{-1/2}(1+\kappa_{2}^{2})^{-1/2}}\leq1,\label{eq: minimax panel strong factor}
\end{equation}
where the quantity $\Lcal(\Theta^{(1)})$ is defined as before: $\Lcal(\Theta^{(1)})=\inf_{CI\in\Phi_{0.95}(\Theta^{(1)})}\sup_{\theta\in\Theta^{(1)}}\EE_{\theta}|CI(X,Y)|$
is the minimax expected length of confidence intervals on $\Theta^{(1)}$
and $\Phi_{0.95}(\Theta^{(1)})=\{CI(\cdot)=[l(\cdot),u(\cdot)]:\ \inf_{\theta\in\Theta^{(1)}}\PP_{\theta}(\beta\in CI(X,Y))\geq0.95\}$
is the set of 95\%-confidence intervals. We also consider robust confidence
intervals, which have uniform validity over $\Theta^{(2)}$ and form
the set $\Phi_{0.95}(\Theta^{(2)})$. To study the impact of the robustness
requirement on efficiency, we revisit the concept of adaptivity by
studying 
\[
\Lcal(\Theta^{(1)},\Theta^{(2)})=\inf_{CI\in\Phi_{0.95}(\Theta^{(2)})}\sup_{\theta\in\Theta^{(1)}}\EE_{\theta}|CI(X,Y)|.
\]

Both $\Lcal(\Theta^{(1)})$ and $\Lcal(\Theta^{(1)},\Theta^{(2)})$
measure performance of confidence intervals on $\Theta^{(1)}$. The
former considers non-robust confidence intervals (ones with validity
over $\Theta^{(1)})$, whereas the latter considers robust confidence
intervals (with validity over the larger set $\Theta^{(2)})$. If
$\Lcal(\Theta^{(1)},\Theta^{(2)})/\Lcal(\Theta^{(1)})$ is asymptotically
larger than one, then the extra robustness on $\Theta^{(2)}\backslash\Theta^{(1)}$
decreases the efficiency even on $\Theta^{(1)}$; if $\Lcal(\Theta^{(1)},\Theta^{(2)})/\Lcal(\Theta^{(1)})$
converges to one, then one can gain extra robustness without sacrificing
efficiency. To characterize $\Lcal(\Theta^{(1)},\Theta^{(2)})$, we
first derive the following result. 
\begin{thm}
\label{thm: panel standard error}Let $CI(\cdot)=[l(\cdot),u(\cdot)]$
be a $(1-\alpha)$ confidence interval that has validity over $\Theta^{(2)}$,
i.e., $\inf_{\theta\in\Theta^{(2)}}\PP_{\theta}(\beta\in CI(X,Y))\geq1-\alpha$
with $\alpha\in(0,1/2)$. Then for any $c\in(0,4)$, we have 
\[
\sup_{\theta\in\Theta^{(1)}}\PP_{\theta}\left(|CI(X,Y)|\geq c(nT)^{-1/2}\right)\geq1-2\alpha-c.
\]
Moreover, 
\[
\sup_{\theta\in\Theta^{(1)}}\EE_{\theta}|CI(X,Y)|\geq(nT)^{-1/2}\left(1-2\alpha\right)^{2}/2.
\]
\end{thm}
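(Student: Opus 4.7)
\textbf{Proof plan for Theorem \ref{thm: panel standard error}.} The plan is a two-point Le Cam argument that exploits the following structural observation: if we shift $\beta$ by a small amount and absorb the shift into the fixed effect $M$, the marginal mean of $(X,Y)$ is unchanged; only the within-entry covariance between $X$ and $Y$ changes, and this change is of order $|\beta_1-\beta_2|$.

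First I would pick an element $\theta_1\in\Theta^{(1)}$ with, say, $\beta_1=0$, $M=\kappa_1\sqrt{nT}\,a_M b_M'$, $D=\kappa_2\sqrt{nT}\,a_D b_D'$, where $a_M,a_D\in\RR^n$ and $b_M,b_D\in\RR^T$ are unit vectors chosen so that $a_M\perp a_D$ and $b_M\perp b_D$. This enforces $\rank M=1$, $\rank D=1$, $M'D=0$, $MD'=0$, and the Frobenius-norm lower bounds required by $\Theta^{(1)}$. Then, for $c\in(0,4)$, I define $\theta_2=(M_2,D,1,1,\beta_2)$ with $\beta_2=c/\sqrt{nT}$ and $M_2=M-D\beta_2$. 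Since $\rank M_2\leq \rank M+\rank D\leq 2$ and $|\beta_2|\leq 1$ for $nT$ large, we have $\theta_2\in\Theta^{(2)}$. By construction the marginal means satisfy $\EE_{\theta_2}X=D=\EE_{\theta_1}X$ and $\EE_{\theta_2}Y=M_2+D\beta_2=M=\EE_{\theta_1}Y$; only the $2\times 2$ within-entry covariance of $(X_{i,t},Y_{i,t})$ differs.

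Next I would bound the KL divergence. For each $(i,t)$, $(X_{i,t},Y_{i,t})$ is bivariate Gaussian with covariance $\Sigma(\beta)=\bigl(\begin{smallmatrix}1 & \beta\\ \beta & 1+\beta^2\end{smallmatrix}\bigr)$, which has $\det\Sigma(\beta)=1$. A direct computation gives $\trace\bigl(\Sigma(\beta_2)^{-1}\Sigma(\beta_1)\bigr)=2+(\beta_1-\beta_2)^2$, hence the per-entry KL equals $(\beta_1-\beta_2)^2/2$. Independence across $(i,t)$ then yields $\KL(\PP_{\theta_1}\|\PP_{\theta_2})=nT(\beta_1-\beta_2)^2/2=c^2/2$, and Pinsker's inequality gives a total-variation bound of order $c$. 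Now the standard Le Cam step: on the event $E=\{|CI|<c/\sqrt{nT}\}=\{|CI|<|\beta_1-\beta_2|\}$, the sets $\{\beta_1\in CI\}$ and $\{\beta_2\in CI\}$ are disjoint; combining this disjointness with $\PP_{\theta_1}(\beta_1\in CI)\geq 1-\alpha$, $\PP_{\theta_2}(\beta_2\in CI)\geq 1-\alpha$, and the TV bound, a short inclusion--exclusion manipulation yields $\PP_{\theta_1}(E)\leq 2\alpha+\mathrm{TV}$, which is the desired $\sup_{\theta\in\Theta^{(1)}}\PP_\theta(|CI|\geq c(nT)^{-1/2})\geq 1-2\alpha-c$.

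For the expected-length claim I would integrate: since $|CI|\geq 0$,
\[
\sup_{\theta\in\Theta^{(1)}}\EE_\theta|CI|\;\geq\;\int_0^{1-2\alpha}\sup_{\theta\in\Theta^{(1)}}\PP_\theta\bigl(|CI|\geq c(nT)^{-1/2}\bigr)\cdot (nT)^{-1/2}\,dc\;\geq\;(nT)^{-1/2}\int_0^{1-2\alpha}(1-2\alpha-c)\,dc,
\]
which evaluates to $(1-2\alpha)^2/2\cdot(nT)^{-1/2}$. The main obstacle, and the one I would scrutinize, is the KL calculation: it is essential that the reparametrization $M\mapsto M-D\beta_2$ keeps $M_2$ inside a rank-2 set while simultaneously matching the means exactly, because any residual mean discrepancy would contribute an $O(nT)\cdot|\beta_1-\beta_2|^2$ term to the KL through the quadratic form $(\mu_1-\mu_2)'\Sigma^{-1}(\mu_1-\mu_2)$, blowing up the bound; verifying that only the covariance structure changes is what makes the $(nT)^{-1/2}$ separation achievable under bounded KL.
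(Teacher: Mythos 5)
Your construction coincides with the paper's: fix $\theta_1\in\Theta^{(1)}$ with $\beta_1=0$, perturb to $\theta_2=(M-\delta D,D,1,1,\delta)$ with $\delta=c(nT)^{-1/2}$, observe the marginal means of $(X,Y)$ are unchanged so the per-entry KL is $\delta^2/2$, invoke Pinsker, and run a two-point argument; this is exactly the proof given in the appendix. The only slip is the final display for the expected-length claim: the inequality
\[
\sup_{\theta\in\Theta^{(1)}}\EE_\theta|CI(X,Y)|\;\geq\;\int_0^{1-2\alpha}\sup_{\theta\in\Theta^{(1)}}\PP_\theta\left(|CI(X,Y)|\geq c(nT)^{-1/2}\right)(nT)^{-1/2}\,dc
\]
exchanges $\sup$ and $\int$ in the wrong direction (the correct general inequality is $\sup\int\leq\int\sup$, not $\geq$). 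The fix is immediate and is what the paper does: the two-point bound $\PP_{\theta_1}(|CI(X,Y)|\geq c(nT)^{-1/2})\geq 1-2\alpha-c$ is established for the \emph{same} fixed $\theta_1$ for every $c\in(0,4)$, so one should first write $\EE_{\theta_1}|CI(X,Y)|=\int_0^\infty\PP_{\theta_1}(|CI(X,Y)|>z)\,dz\geq(nT)^{-1/2}\int_0^{1-2\alpha}(1-2\alpha-c)\,dc$ for that single $\theta_1$, and only then pass to the supremum over $\Theta^{(1)}$.
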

Notice that the above lower bound does not depend on $\kappa_{2}$.
On the other hand, $\sqrt{nT}|CI_{*}(X,Y)|\asymp(1+\kappa_{2}^{2})^{-1/2}$
decreases with $\kappa_{2}$. Thus, for large enough $\kappa_{2}$,
$CI_{*}$ in (\ref{eq: CI usual}) violates the lower bound in Theorem
\ref{thm: panel standard error}. Since the lower bound is satisfied
by any confidence interval with uniform validity over $\Theta^{(2)}$,
it follows that any robust confidence interval will be wider than
$CI_{*}$ on $\Theta^{(1)}$. Equivalently, we can state the result
in terms of robustness (coverage guarantee for $CI_{*}$). 
\begin{cor}
\label{cor: under-coverage panel data}Let $CI(\cdot)=[l(\cdot),u(\cdot)]$
be a random interval. Assume that for any $\eta\in(0,1)$,
\begin{equation}
\limsup_{n,T\rightarrow\infty}\sup_{\theta\in\Theta^{(1)}}\PP_{\theta}\left(|CI(X,Y)|>3.92(nT)^{-1/2}(1+\kappa_{2}^{2})^{-1/2}(1+\eta)\right)=0.\label{eq: panel reg good efficiency}
\end{equation}

Then 
\[
\liminf_{n,T\rightarrow\infty}\inf_{\theta\in\Theta^{(2)}}\PP_{\theta}\left(\beta\in CI(X,Y)\right)\leq\frac{1}{2}+1.96(1+\kappa_{2}^{2})^{-1/2}.
\]
\end{cor}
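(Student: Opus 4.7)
The plan is to derive the corollary directly from Theorem \ref{thm: panel standard error} by a limiting argument. Essentially the corollary is the contrapositive of Theorem \ref{thm: panel standard error}: if $CI$ is too narrow on $\Theta^{(1)}$ in the sense of (\ref{eq: panel reg good efficiency}), the theorem's length lower bound forces $CI$ to miss coverage whenever the target level on $\Theta^{(2)}$ exceeds $1/2+1.96(1+\kappa_{2}^{2})^{-1/2}$. Set $p^{*} := \liminf_{n,T\to\infty}\inf_{\theta\in\Theta^{(2)}}\PP_{\theta}(\beta\in CI(X,Y))$. If $p^{*}\leq 1/2$, the conclusion holds trivially since $1.96(1+\kappa_{2}^{2})^{-1/2}>0$, so I may assume $p^{*}>1/2$.

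For each small $\epsilon>0$ there is an $N$ such that for all $n,T\geq N$, $CI$ is a valid $(1-\alpha_{\epsilon})$-confidence interval on $\Theta^{(2)}$ with $\alpha_{\epsilon}:=1-p^{*}+\epsilon$, and shrinking $\epsilon$ further ensures $\alpha_{\epsilon}\in(0,1/2)$. I then pick $\eta>0$ small enough that $c:=3.92(1+\kappa_{2}^{2})^{-1/2}(1+2\eta)$ lies in $(0,4)$; this is possible because $\kappa_{2}>0$ implies $3.92(1+\kappa_{2}^{2})^{-1/2}<3.92<4$. Applying the first conclusion of Theorem \ref{thm: panel standard error} to this $\alpha_{\epsilon}$ and $c$ yields, for every $n,T\geq N$,
\[
\sup_{\theta\in\Theta^{(1)}}\PP_{\theta}\bigl(|CI(X,Y)|\geq c(nT)^{-1/2}\bigr)\geq 1-2\alpha_{\epsilon}-c.
\]
Because $c>3.92(1+\kappa_{2}^{2})^{-1/2}(1+\eta)$, the event on the left is contained in $\{|CI(X,Y)|>3.92(1+\kappa_{2}^{2})^{-1/2}(1+\eta)(nT)^{-1/2}\}$, whose supremum probability over $\Theta^{(1)}$ tends to zero by assumption (\ref{eq: panel reg good efficiency}). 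Taking $\limsup_{n,T\to\infty}$ therefore gives $1-2\alpha_{\epsilon}-c\leq 0$, which rearranges to $p^{*}\leq 1/2+1.96(1+\kappa_{2}^{2})^{-1/2}(1+2\eta)+\epsilon$; sending $\epsilon,\eta\to 0$ delivers the claimed bound.

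No step here poses genuine difficulty: Theorem \ref{thm: panel standard error} is doing all the heavy lifting, and the corollary is little more than its restatement in terms of coverage rather than length. The only technical care needed is in converting the strict inequality inside (\ref{eq: panel reg good efficiency}) into the non-strict inequality of Theorem \ref{thm: panel standard error} by using a slightly larger threshold $c$, while still keeping $c<4$ as required by the hypothesis of that theorem. The exact constant $1/2$ in the conclusion traces back to the factor of $2$ in the ``$1-2\alpha-c$'' lower bound of Theorem \ref{thm: panel standard error}, which itself reflects the two-point testing structure underlying that theorem's proof.
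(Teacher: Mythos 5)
Your proof is correct and follows essentially the same route as the paper's: handle the trivial case $p^{*}\leq 1/2$ separately, then in the nontrivial case apply Theorem \ref{thm: panel standard error} with $c=3.92(1+\kappa_{2}^{2})^{-1/2}(1+2\eta)$, feed the resulting length lower bound into assumption (\ref{eq: panel reg good efficiency}), and let $\eta$ (and your auxiliary $\epsilon$) go to zero. The only cosmetic difference is that the paper works directly with the sequence $\alpha_{n,T}=1-\inf_{\theta\in\Theta^{(2)}}\PP_{\theta}(\beta\in CI)$ and bounds it for each $(n,T)$ before taking a $\limsup$, whereas you freeze a single $\alpha_{\epsilon}=1-p^{*}+\epsilon$ valid for all large $(n,T)$; both are correct and equivalent.
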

Notice that $CI_{*}$ satisfies (\ref{eq: panel reg good efficiency})
since $|CI_{*}(X,Y)|=3.92(nT)^{-1/2}(1+\kappa_{2}^{2})^{-1/2}$. By
Corollary \ref{cor: under-coverage panel data}, any confidence interval
that has a width similar to (or shorter than) that of $CI_{*}$ on
$\Theta^{(1)}$ will have coverage probability close to 1/2 on $\Theta^{(2)}$.
Finally, we compare $\Lcal(\Theta^{(1)},\Theta^{(2)})$ and $\Lcal(\Theta^{(1)})$.
Applying Theorem \ref{thm: panel standard error} with $\alpha=0.05$,
we obtain obtain 
\[
\Lcal(\Theta^{(1)},\Theta^{(2)})\geq(nT)^{-1/2}\left(1-2\alpha\right)^{2}/2=0.405(nT)^{-1/2}.
\]

In light of (\ref{eq: minimax panel strong factor}), this means 
\[
\liminf_{n,T\rightarrow\infty}\frac{\Lcal(\Theta^{(1)},\Theta^{(2)})}{\Lcal(\Theta^{(1)})}\geq\frac{0.405}{3.92}\sqrt{1+\kappa_{2}^{2}}>\frac{\sqrt{1+\kappa_{2}^{2}}}{9.7}.
\]

Therefore, any robust confidence interval is asymptotically wider
than any non-robust confidence interval whenever $\kappa_{2}>9.65$.
In other words, requiring validity on $\Theta^{(2)}\backslash\Theta^{(1)}$
(allowing for weak factors and unknown number of factors) would lead
to efficiency loss on $\Theta^{(1)}$ if $\kappa_{2}>9.65$. 

Although the constant of 9.65 is not the optimal constant, the analysis
highlights the lack of adaptivity in inference. Without strong factor
in the fixed effects, strong factor components in $X$ would result
in a stark loss of efficiency. This can be explained. When the fixed
effects $M$ have strong factors, the projections $\Pi_{M}$ and $\Pi_{M'}$
can be estimated well and thus we can safely identify components in
$X$ that cannot be absorbed by the fixed effects; as a result, the
variations in $\Pi_{M}D\Pi_{M'}+u$ can be used to learn $\beta$.
However, when $M$ does not have strong factors, it is quite difficult
to learn projections $\Pi_{M}$ and $\Pi_{M'}$ and hence we cannot
clearly tell which part of $X$ is left after removing components
correlated with the fixed effects; consequently, we will not be sure
that any part of $D$ can be used to learn $\beta$ and instead will
only consider variations in $u$ simply to be on the safe side (ensure
coverage probability in all cases), resulting in a confidence interval
with length unrelated to $\kappa_{2}$ (representing factor strength
in $D$). 

Another implication is that when there are potential weak factors,
uncertainty in the number of factors leads to efficiency loss. This
is in contrast with results in \citet{moon2014linear}, who established
that when all the factors are strong, uncertainty in the number of
factors does not cause efficiency loss. 

\appendix

\section{Proofs}

\subsection{Proof of Theorem \ref{thm: lower bound 1 factor SC}}
\begin{lem}
\label{lem: chi2 distance}Let $g_{\mu}(\cdot)$ denote the pdf of
$N(\mu,I)$. Then 
\[
\int\frac{g_{\mu_{1}}(x)g_{\mu_{2}}(x)}{g_{\mu_{0}}(x)}dx=\exp(\delta_{1}'\delta_{2}),
\]
where $\delta_{1}=\mu_{1}-\mu_{0}$ and $\delta_{2}=\mu_{2}-\mu_{0}$. 
\end{lem}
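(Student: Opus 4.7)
The plan is to evaluate the integral by interpreting it as an expectation under $N(\mu_0,I)$. Writing $L_i(x) := g_{\mu_i}(x)/g_{\mu_0}(x)$ for the likelihood ratio, a short calculation with the Gaussian density---expanding $\|x-\mu_i\|^2 - \|x-\mu_0\|^2$ and using $\|x-\mu_i\|^2 = \|x-\mu_0-\delta_i\|^2$---gives the familiar log-linear form
$$L_i(x) = \exp\!\left(\delta_i'(x-\mu_0) - \tfrac{1}{2}\|\delta_i\|^2\right).$$
The integral in question is exactly $\EE_{X \sim N(\mu_0,I)}[L_1(X) L_2(X)]$, and multiplying the two likelihood ratios yields
$$L_1(X) L_2(X) = \exp\!\left((\delta_1+\delta_2)'(X-\mu_0) - \tfrac{1}{2}\bigl(\|\delta_1\|^2+\|\delta_2\|^2\bigr)\right).$$

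Next I would invoke the moment generating function of the standard normal: since $X-\mu_0 \sim N(0,I)$, its MGF at $\delta_1+\delta_2$ equals $\exp(\tfrac{1}{2}\|\delta_1+\delta_2\|^2)$. Taking expectations therefore produces
$$\EE_{X\sim N(\mu_0,I)}[L_1(X) L_2(X)] = \exp\!\left(\tfrac{1}{2}\|\delta_1+\delta_2\|^2 - \tfrac{1}{2}\|\delta_1\|^2 - \tfrac{1}{2}\|\delta_2\|^2\right) = \exp(\delta_1'\delta_2),$$
using the polarization identity $\|\delta_1+\delta_2\|^2 - \|\delta_1\|^2 - \|\delta_2\|^2 = 2\delta_1'\delta_2$.

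This is a routine Gaussian computation and there is no genuine obstacle. One could equally do it by brute force: substitute the pdfs, combine the three quadratic exponents, complete the square in $x$ around $\nu := \mu_1+\mu_2-\mu_0$, integrate out the resulting $N(\nu,I)$ density, and verify that the leftover $\mu$-dependent constant collapses to $\delta_1'\delta_2$ after cancellation of the $\|\mu_j\|^2$ terms. The only item deserving care in either approach is sign-tracking in the expansion of $\|\nu\|^2$ (or equivalently in the exponent of $L_1 L_2$), ensuring that the pure $\|\mu_j\|^2$ pieces cancel and only the inner product of the deviations remains.
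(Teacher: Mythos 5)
Your proof is correct, and it takes a slightly different route from the paper. You reorganize the integrand as $\EE_{X\sim N(\mu_0,I)}[L_1(X)L_2(X)]$ where each $L_i$ is a Gaussian likelihood ratio in its canonical log-linear form $\exp(\delta_i'(X-\mu_0)-\tfrac12\|\delta_i\|^2)$, multiply, and then invoke the moment generating function of $N(0,I)$ at $\delta_1+\delta_2$; the polarization identity does the rest. The paper instead substitutes the densities directly, merges the three quadratic exponents into one, completes the square in $x$ around $\mu_1+\mu_2-\mu_0$, integrates out the resulting Gaussian, and simplifies the leftover constant to $\exp(\delta_1'\delta_2)$ --- precisely the ``brute force'' alternative you sketch at the end. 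The two arguments are computationally equivalent (the MGF evaluation \emph{is} a completion of the square in disguise), but your version isolates the exponential-family structure of the likelihood ratio, which makes the cancellation of the $\|\mu_j\|^2$ terms automatic rather than something to verify term by term, and it generalizes immediately to chi-square-divergence computations for other natural exponential families. The paper's version has the modest advantage of requiring nothing beyond the Gaussian density formula itself.
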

\begin{proof}[\textbf{Proof of Lemma \ref{lem: chi2 distance}}]
Recall that $g_{\mu}(x)=(2\pi)^{-k/2}\exp(-\|x-\mu\|_{2}^{2}/2)$.
Notice that 
\begin{align*}
 & \int\frac{g_{\mu_{1}}(x)g_{\mu_{2}}(x)}{g_{\mu_{0}}(x)}dx\\
 & =(2\pi)^{-k/2}\int\exp\left[-\frac{1}{2}\left(\|x-\mu_{1}\|_{2}^{2}+\|x-\mu_{2}\|_{2}^{2}-\|x-\mu_{0}\|_{2}^{2}\right)\right]dx\\
 & =(2\pi)^{-k/2}\int\exp\left[-\frac{1}{2}\left(2\|x\|_{2}^{2}-2x'(\mu_{1}+\mu_{2})+\|\mu_{1}\|_{2}^{2}+\|\mu_{2}\|_{2}^{2}-\left(\|\mu_{0}\|_{2}^{2}+\|x\|_{2}^{2}-2x'\mu_{0}\right)\right)\right]dx\\
 & =(2\pi)^{-k/2}\int\exp\left[-\frac{1}{2}\left(\|x\|_{2}^{2}-2x'(\mu_{1}+\mu_{2}-\mu_{0})+\|\mu_{1}\|_{2}^{2}+\|\mu_{2}\|_{2}^{2}-\|\mu_{0}\|_{2}^{2}\right)\right]dx\\
 & =\exp\left[-\frac{1}{2}\left(\|\mu_{1}\|_{2}^{2}+\|\mu_{2}\|_{2}^{2}-\|\mu_{0}\|_{2}^{2}-\|\mu_{1}+\mu_{2}-\mu_{0}\|_{2}^{2}\right)\right]\times\\
 & \qquad\qquad\qquad(2\pi)^{-k/2}\int\exp\left[-\frac{1}{2}\left(\|x\|_{2}^{2}-2x'(\mu_{1}+\mu_{2}-\mu_{0})+\|\mu_{1}+\mu_{2}-\mu_{0}\|_{2}^{2}\right)\right]dx\\
 & =\exp\left[-\frac{1}{2}\left(\|\mu_{1}\|_{2}^{2}+\|\mu_{2}\|_{2}^{2}-\|\mu_{0}\|_{2}^{2}-\|\mu_{1}+\mu_{2}-\mu_{0}\|_{2}^{2}\right)\right]\\
 & =\exp\left[-\frac{1}{2}\left(\|\mu_{0}+\delta_{1}\|_{2}^{2}+\|\mu_{0}+\delta_{2}\|_{2}^{2}-\|\mu_{0}\|_{2}^{2}-\|\mu_{0}+\delta_{1}+\delta_{2}\|_{2}^{2}\right)\right]\\
 & =\exp(\delta_{1}'\delta_{2})
\end{align*}
\end{proof}
\begin{lem}
\label{lem: bound TV}Suppose that we observe $Y_{-1,-1}=M_{-1,-1}+\varepsilon_{-1,-1}$,
where $M\in\RR^{n\times T}$ is a non-random matrix and $\varepsilon\in\RR^{n\times T}$
is a matrix of i.i.d $N(0,1)$ random variables. Let $\PP_{M}$ denote
the probability measure of the distribution for $Y_{-1,-1}$ and $\EE_{M}$
denote the expectation under $\PP_{M}$. Then 
\[
\EE_{M}\left|\frac{d\PP_{\tM}}{d\PP_{M}}-1\right|\leq\sqrt{\exp\left(\|\tM_{-1,-1}-M_{-1,-1}\|_{F}^{2}\right)-1}.
\]
\end{lem}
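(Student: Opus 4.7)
The plan is to bound the $L^1$ deviation of the likelihood ratio from $1$ by the $L^2$ deviation via Cauchy--Schwarz (equivalently Jensen's inequality), and then compute the $L^2$ deviation exactly using Lemma \ref{lem: chi2 distance}.

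First I would set up the likelihood ratio. The observed data $Y_{-1,-1}$ consists of $nT-1$ genuinely random coordinates (the $(1,1)$ entry is deterministically zero under the convention $A_{-1,-1}$); vectorizing these coordinates gives $y \sim N(m, I_{nT-1})$ where $m$ collects the entries of $M_{-1,-1}$ excluding position $(1,1)$, and similarly define $\tm$ from $\tM_{-1,-1}$. Then $d\PP_{\tM}/d\PP_{M}(y) = g_{\tm}(y)/g_{m}(y)$ with $g_{\mu}$ the density of $N(\mu,I_{nT-1})$.

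Next I would apply Jensen (or Cauchy--Schwarz) to pass from $L^1$ to $L^2$:
\[
\EE_{M}\left|\frac{d\PP_{\tM}}{d\PP_{M}}-1\right|
\leq \sqrt{\EE_{M}\!\left(\frac{d\PP_{\tM}}{d\PP_{M}}-1\right)^{2}}
= \sqrt{\EE_{M}\!\left(\frac{d\PP_{\tM}}{d\PP_{M}}\right)^{2}-1},
\]
where the last equality uses $\EE_{M}(d\PP_{\tM}/d\PP_{M})=1$. Now
\[
\EE_{M}\!\left(\frac{d\PP_{\tM}}{d\PP_{M}}\right)^{2}
= \int \frac{g_{\tm}(y)^{2}}{g_{m}(y)}\,dy,
\]
which is exactly the integral appearing in Lemma \ref{lem: chi2 distance} with $\mu_{1}=\mu_{2}=\tm$ and $\mu_{0}=m$, giving $\delta_{1}=\delta_{2}=\tm-m$. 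Invoking the lemma yields
\[
\int \frac{g_{\tm}(y)^{2}}{g_{m}(y)}\,dy = \exp\!\left(\|\tm-m\|_{2}^{2}\right) = \exp\!\left(\|\tM_{-1,-1}-M_{-1,-1}\|_{F}^{2}\right),
\]
since the Frobenius norm of $\tM_{-1,-1}-M_{-1,-1}$ matches the Euclidean norm of its vectorization (the omitted $(1,1)$ coordinate contributes zero in both). Substituting into the Jensen bound produces the stated inequality.

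There is no real obstacle here; the only mild technical point is being explicit that the reference measure is the Lebesgue measure on $\RR^{nT-1}$ (the $(1,1)$ entry is a deterministic zero and drops out of all integrals), so that Lemma \ref{lem: chi2 distance} is applied in dimension $k=nT-1$ rather than $k=nT$. Once that bookkeeping is done, the proof is a one-line Jensen bound followed by a direct application of the chi-squared identity from Lemma \ref{lem: chi2 distance}.
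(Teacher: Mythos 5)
Your proof is correct and follows essentially the same route as the paper's: bound the $L^1$ distance by the $L^2$ distance via Jensen/Cauchy--Schwarz, simplify using $\EE_{M}(d\PP_{\tM}/d\PP_{M})=1$, and then evaluate $\EE_{M}(d\PP_{\tM}/d\PP_{M})^{2}$ exactly by applying Lemma \ref{lem: chi2 distance} with $\mu_{1}=\mu_{2}=\vector(\tM_{-1,-1})$ and $\mu_{0}=\vector(M_{-1,-1})$. Your explicit remark about the reference measure living on $\RR^{nT-1}$ (since the $(1,1)$ entry is deterministic) is a small clarification that the paper passes over silently but which does not change the argument.
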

\begin{proof}[\textbf{Proof of Lemma \ref{lem: bound TV}}]
Notice that 
\[
\EE_{M}\left|\frac{d\PP_{\tM}}{d\PP_{M}}-1\right|\leq\sqrt{\EE_{M}\left(\frac{d\PP_{\tM}}{d\PP_{M}}-1\right)^{2}}=\sqrt{\EE_{M}\left[\left(\frac{d\PP_{\tM}}{d\PP_{M}}\right)^{2}-2\frac{d\PP_{\tM}}{d\PP_{M}}+1\right]}=\sqrt{\EE_{M}\left[\left(\frac{d\PP_{\tM}}{d\PP_{M}}\right)^{2}\right]-1}.
\]

We also observe that $\PP_{M}$ is the multivariate normal distribution
with mean $\vector(M_{-1,-1})$ and covariance matrix $I_{nT-1}$.
Let $g_{\mu}$ denote the density of $N(\mu,I)$. Therefore, by Lemma
\ref{lem: chi2 distance}, we have that 
\[
\EE_{M}\left[\left(\frac{d\PP_{\tM}}{d\PP_{M}}\right)^{2}\right]=\int\frac{\left[g_{\vector(\tM)}(x)\right]^{2}}{g_{g_{\vector(M)}}(x)}dx=\exp\left(\|\vector(\tM_{-1,-1}-M_{-1,-1})\|_{2}^{2}\right).
\]

The result follows. 
\end{proof}
\begin{proof}[\textbf{Proof of Theorem \ref{thm: lower bound 1 factor SC}}]
 We define $c_{1}=2\tau(nT)^{-1/2}$ and $c_{2}=q\min\{1/2,\sqrt{T}/\tau\}$,
where $q=\sqrt{\log(\alpha^{2}+1)}/2$. We define 
\[
\bM=\begin{pmatrix}\kappa/2\\
c_{1}\oneb_{n-1}
\end{pmatrix}\begin{pmatrix}0 & \oneb_{T-1}'\end{pmatrix}=\begin{pmatrix}0 & \frac{1}{2}\kappa\oneb_{T-1}'\\
0 & c_{1}\oneb_{n-1}\oneb_{T-1}'
\end{pmatrix}
\]
and 
\[
\tM=\begin{pmatrix}\kappa/2\\
c_{1}\oneb_{n-1}
\end{pmatrix}\begin{pmatrix}c_{2} & \oneb_{T-1}'\end{pmatrix}=\begin{pmatrix}\frac{1}{2}c_{2}\kappa & \frac{1}{2}\kappa\oneb_{T-1}'\\
c_{1}c_{2}\oneb_{n-1} & c_{1}\oneb_{n-1}\oneb_{T-1}'
\end{pmatrix}.
\]

Then 
\[
\|\bM_{-1,-1}-\tM_{-1,-1}\|_{F}^{2}=c_{1}^{2}c_{2}^{2}(n-1).
\]

We observe that 
\begin{align}
\left|\EE_{\tM}\psi-\EE_{\bM}\psi\right| & =\left|\EE_{\bM}\left(\psi\cdot\frac{d\PP_{\tM}}{d\PP_{\bM}}\right)-\EE_{\bM}\psi\right|\nonumber \\
 & =\left|\EE_{\bM}\psi\cdot\left(\frac{d\PP_{\tM}}{d\PP_{\bM}}-1\right)\right|\nonumber \\
 & \overset{\text{(i)}}{\leq}\EE_{\bM}\left|\frac{d\PP_{\tM}}{d\PP_{\bM}}-1\right|\nonumber \\
 & \overset{\text{(ii)}}{\leq}\sqrt{\exp\left(\|\tM_{-1,-1}-\bM_{-1,-1}\|_{F}^{2}\right)-1}\nonumber \\
 & =\sqrt{\exp\left(c_{1}^{2}c_{2}^{2}(n-1)\right)-1}\nonumber \\
 & \leq\sqrt{\exp(4q^{2})-1}=\alpha,\label{eq: thm 1-factor 4}
\end{align}
where (i) follows by $|\psi|\leq1$ and (ii) follows by Lemma \ref{lem: bound TV}.
Notice that 
\[
\|\bM\|_{\infty}=\max\{c_{1},\kappa/2\}\leq c_{1}+\kappa/2
\]
and 
\[
\|\tM\|_{\infty}=\max\{c_{1},\kappa/2,c_{1}c_{2},c_{2}\kappa/2\}\leq c_{1}+\kappa/2+c_{1}c_{2}+c_{2}\kappa/2.
\]

The assumption of $\tau\leq\kappa\sqrt{nT}/12$ implies $c_{1}\leq\kappa/6$.
The choice of $c_{2}$ implies that $c_{2}\leq1/2$ (since $q<1$
for any $\alpha\le1$). Therefore, the above two displays imply $\max\{\|\bM\|_{\infty},\|\tM\|_{\infty}\}\leq\kappa$.

Since both $\bM$ has rank 1, its largest singular value is equal
to its Frobenius norm. Therefore, we have 
\begin{align*}
\sigma_{1}(\bM)=\|\tM\|_{F} & =\sqrt{c_{1}^{2}(n-1)(T-1)+\kappa^{2}(T-1)/4}\\
 & \geq c_{1}\sqrt{(n-1)(T-1)}\overset{\text{(i)}}{\geq}\frac{1}{2}c_{1}\sqrt{nT}=\tau,
\end{align*}
where (i) follows by $n-1\geq n/2$ and $T-1\geq T/2$ (due to $n,T\geq2$).
Similarly, 
\[
\sigma_{1}(\tM)=\|\tM\|_{F}=\sqrt{\|\tM\|_{F}^{2}+c_{1}^{2}c_{2}^{2}(n-1)+c_{2}^{2}\kappa^{2}/4}\geq\|\tM\|_{F}\geq\tau.
\]

Therefore, we have proved that $\bM\in\Mcalnull(\tau)$ and $\tM\in\Mcal_{c_{2}\kappa/2}(\tau)$. 

Define $\rho_{1}=\sqrt{\log(\alpha^{2}+1)}\kappa\min\{1,\sqrt{T}/\tau\}/8$.
Notice that $\rho_{1}\leq c_{2}\kappa/2$, which means $\Mcal_{c_{2}\kappa/2}(\tau)\subset\Mcal_{\rho_{1}}(\tau)$.
By $\bM\in\Mcalnull(\tau)$, we have $\EE_{\bM}\psi\leq\alpha$. Then
(\ref{eq: thm 1-factor 4}) implies $\EE_{\tM}\psi\leq2\alpha$. It
follows that 
\begin{equation}
\inf_{M\in\Mcal_{\rho_{1}}(\tau)}\EE_{M}\psi\leq\inf_{M\in\Mcal_{c_{2}\kappa/2}(\tau)}\EE_{M}\psi\leq\EE_{\tM}\psi\leq2\alpha.\label{eq: thm 1-factor 5}
\end{equation}

Let $\rho_{2}=\sqrt{\log(\alpha^{2}+1)}\kappa\min\{1,\sqrt{n}/\tau\}/8$.
Since $n$ and $T$ play symmetric roles, we can simply switch the
role of $n$ and $T$. Hence, by the same analysis, we can show that
the desired result holds for 
\begin{equation}
\inf_{M\in\Mcal_{\rho_{2}}(\tau)}\EE_{M}\psi\leq2\alpha.\label{eq: thm 1-factor 6}
\end{equation}

Since (\ref{eq: thm 1-factor 5}) and (\ref{eq: thm 1-factor 6})
both hold, we have that
\[
\inf_{M\in\Mcal_{\rho_{1}}(\tau)\bigcup\Mcal_{\rho_{2}}(\tau)}\EE_{M}\psi\leq2\alpha.
\]

The desired result follows once we notice that $\Mcal_{\rho_{1}}(\tau)\bigcup\Mcal_{\rho_{2}}(\tau)=\Mcal_{\max\{\rho_{1},\rho_{2}\}}(\tau)\subset\Mcal_{(\rho_{1}+\rho_{2})/2}(\tau)$
and $\min\{1,\sqrt{n}/\tau\}+\min\{1,\sqrt{T}/\tau\}\gtrsim\min\{1,\sqrt{n+T}/\tau\}$. 
\end{proof}

\subsection{Proof of Theorem \ref{thm: upper bound 1 factor SC}}

We shall use the notation $X\lessp Y$ to denote $X=O_{P}(Y)$. For
simplicity, we shall write $\PP$ and $\EE$ instead of $\PP_{M}$
and $\EE_{M}$. However, all the results (including $O_{P}$ notations)
hold uniformly in $M\in\Mcal(\tau)$ with $\tau\geq4\max\{\sqrt{10}\kappa,2\}\sqrt{3(n+T)}$. 

In this section, we define
\[
W=\begin{pmatrix}X_{1,2} & X_{1,3} & \cdots & X_{1,T}\\
X_{2,2} & X_{2,3} & \cdots & X_{2,T}\\
\vdots & \vdots & \ddots & \vdots\\
X_{n,2} & X_{n,3} & \cdots & X_{n,T}
\end{pmatrix}\in\RR^{n\times(T-1)}.
\]

Therefore, $X=(X_{,1},W)\in\RR^{n\times T}$ with $X_{,1}=(X_{1,1},X_{-1,1}')'=(X_{1,1},...,X_{n,1})'\in\RR^{n}$.
We use the notation $W=LF'+\varepsilon$ with $L\in\RR^{n}$, $F\in\RR^{T-1}$
and $\varepsilon\in\RR^{n\times(T-1)}$. Let $\varepsilon=(\varepsilon_{(1)},...,\varepsilon_{(n)})'$
with $\varepsilon_{(i)}\in\RR^{T-1}$.

We denote $\btau=\|LF'\|_{F}=\|L\|_{2}\|F\|_{2}$. Clearly, $\btau^{2}=\|L\|_{2}^{2}\|F\|_{2}^{2}=\tau^{2}-\sum_{i=1}^{n}M_{i,1}^{2}\geq\tau^{2}-n\kappa^{2}$.
Since $\tau^{2}\geq12(n+T)\max\{40\kappa^{2},16\}$, we have $\tau^{2}-n\kappa^{2}\geq11\tau^{2}/12$.
Therefore, $\btau^{2}\geq11\tau^{2}/12$, which implies that $\btau\geq\sqrt{11/12}\tau>0.95\tau$. 

Clearly $\hM_{1,1}$ does not depend on the normalization of $\hL$:
if we replace $\hL$ with $c\hL$ for $c\neq0$, we obtain the same
estimate $\hM_{1,1}$. Thus, without loss of generality, we assume
$\|\hL\|_{2}=\|L\|_{2}=\btau/\sqrt{T}$, which means $\|F\|_{2}=\sqrt{T}$.
Define $H=F'W'\hL\htau_{n}^{-2}$ and $\Delta_{L}=\varepsilon W'\hL\htau_{n}^{-2}$,
where $\htau_{n}=\sigma_{1}(W)$. Since $\hL$ is the PCA estimate
using $W$, it means that $WW'\hL=\htau_{n}^{2}\hL$. Notice that
\[
\htau_{n}^{2}\hL=WW'\hL=(LF'+\varepsilon)W'\hL=L\cdot(F'W'\hL)+\varepsilon W'\hL.
\]

Therefore, we have $\hL=LH+\Delta_{L}$. 

\subsubsection{Preliminary results}
\begin{lem}
\label{lem: bound key 0}We have the following:\\
(1) $\btau\geq\max\{13,20.8\kappa\}\sqrt{n+T}$.\\
(2) with probability approaching one, $\|\varepsilon\|\leq3\sqrt{n+T},$$\htau_{n}\geq3\btau/4$,
$|H|\leq4$ and $\|\Delta_{L}\|_{2}\leq8\sqrt{1+n/T}$. 
\end{lem}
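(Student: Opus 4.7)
Part (1) is a direct numerical computation. Using the identity $\btau \geq \sqrt{11/12}\,\tau$ derived in the paragraph preceding the lemma, together with the hypothesis $\tau \geq 4\max\{\sqrt{10}\kappa,2\}\sqrt{3(n+T)}$, I get
\[
\btau \;\geq\; \sqrt{11/12}\cdot 4\max\{\sqrt{10}\kappa,2\}\sqrt{3(n+T)} \;=\; 2\sqrt{11}\,\max\{\sqrt{10}\kappa,2\}\sqrt{n+T} \;=\; \max\{2\sqrt{110}\,\kappa,\ 4\sqrt{11}\}\sqrt{n+T}.
\]
The claim then reduces to $2\sqrt{110} \geq 20.8$ and $4\sqrt{11} \geq 13$, both of which are easy arithmetic checks.

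For part (2) my plan is to work on a single high-probability event on which the Gaussian noise matrix $\varepsilon \in \RR^{n\times(T-1)}$ is well-behaved, and then derive the remaining three bounds deterministically. The probabilistic step is an application of Corollary~5.35 of \citet{vershynin2010introduction} to $\varepsilon$, which yields $\|\varepsilon\| \leq \sqrt{n}+\sqrt{T-1}+t$ with probability at least $1-2e^{-t^{2}/2}$; taking $t$ slowly growing gives $\|\varepsilon\| \leq 3\sqrt{n+T}$ with probability tending to one.

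Working on that event, the bound on $\htau_{n}$ follows from Weyl's inequality applied to $W = LF' + \varepsilon$: since $\sigma_{1}(LF') = \|L\|_{2}\|F\|_{2} = \btau$, I obtain $\htau_{n} \geq \btau - \|\varepsilon\| \geq \btau - 3\sqrt{n+T}$, and part (1) gives $\btau \geq 13\sqrt{n+T}$, so $\htau_{n} \geq (1-3/13)\btau = (10/13)\btau > (3/4)\btau$. For the scalar $H$, Cauchy--Schwarz and submultiplicativity of the spectral norm give
\[
|H| \;=\; \frac{|F'W'\hL|}{\htau_{n}^{2}} \;\leq\; \frac{\|F\|_{2}\,\|W\|\,\|\hL\|_{2}}{\htau_{n}^{2}} \;=\; \frac{\sqrt{T}\cdot \htau_{n}\cdot(\btau/\sqrt{T})}{\htau_{n}^{2}} \;=\; \frac{\btau}{\htau_{n}} \;\leq\; \tfrac{4}{3},
\]
which is well below the claimed bound of $4$. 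The same chain of inequalities, with the initial $\|F\|_{2}$ replaced by $\|\varepsilon\|$, gives
\[
\|\Delta_{L}\|_{2} \;\leq\; \frac{\|\varepsilon\|\,\|W\|\,\|\hL\|_{2}}{\htau_{n}^{2}} \;=\; \frac{\|\varepsilon\|\,\|\hL\|_{2}}{\htau_{n}} \;\leq\; \frac{3\sqrt{n+T}\cdot(\btau/\sqrt{T})}{(3/4)\btau} \;=\; 4\sqrt{1+n/T},
\]
which is half of the stated bound.

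No step here is an obstacle: the lemma is essentially a packaging of Weyl's inequality and a single Gaussian matrix concentration bound, with the scalar estimates following from Cauchy--Schwarz. The factor-of-two slack in the stated inequalities appears to be deliberate headroom for downstream use.
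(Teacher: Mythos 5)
Your proof is correct, and for the estimates on $|H|$ and $\|\Delta_L\|_2$ it takes a genuinely cleaner route than the paper. You bound $\|W'\hL\|_{2}$ directly by $\|W\|\cdot\|\hL\|_{2}=\htau_{n}\|\hL\|_{2}$ (in fact the defining relation $WW'\hL=\htau_{n}^{2}\hL$ makes this an equality), after which both $|H|$ and $\|\Delta_L\|_2$ reduce to a single Cauchy--Schwarz step. The paper instead expands $W'\hL=(FL'+\varepsilon')\hL$ and estimates the two pieces separately; the cross term then forces it to control $\|\varepsilon F\|_{2}$, which costs an extra probabilistic event via the $\chi^{2}$-tail bound of Laurent and Massart. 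Your argument avoids that second concentration step entirely: everything past $\|\varepsilon\|\leq 3\sqrt{n+T}$ is deterministic on one event, and you get the sharper constants $|H|\leq 4/3$ and $\|\Delta_L\|_2\leq 4\sqrt{1+n/T}$ as a side effect, which of course imply the lemma's $4$ and $8\sqrt{1+n/T}$. The remaining pieces (part (1) arithmetic, the Vershynin bound on $\|\varepsilon\|$, and Weyl's inequality for $\htau_n$) match the paper's argument in substance, differing only in that the paper fixes $t=\sqrt{n}$ in the concentration inequality while you leave $t$ slowly growing — equivalent for the stated $o(1)$ conclusion.
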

\begin{proof}[\textbf{Proof of Lemma \ref{lem: bound key 0}}]
 Notice that $\btau\geq0.95\tau>0.95\sqrt{12\times16(n+T)}>13\sqrt{n+T}$.
Moreover, $\btau\geq0.95\tau>0.95\sqrt{12\times40(n+T)}\kappa>20.8\kappa\sqrt{n+T}$.
This is the first claim. 

For the second claim, define the event $\Acal=\left\{ \|\varepsilon\|\leq3\sqrt{n+T}\right\} $.
We first observe that 
\[
\PP(\Acal^{c})=\PP(\|\varepsilon\|>3\sqrt{n+T})\leq\PP(\|\varepsilon\|>2\sqrt{n}+\sqrt{T-1})\overset{\texti}{\leq}2\exp(-n/2)=o(1),
\]
where (i) follows by Corollary 5.35 of \citet{vershynin2010introduction}.
On the event $\Acal$, $\|\varepsilon\|\leq3\sqrt{n+T}\leq3\btau/13$,
which means 
\begin{equation}
\htau_{n}=\|W\|\geq\|LF'\|-\|\varepsilon\|=\btau-\|\varepsilon\|\geq\btau-3\btau/13>3\btau/4.\label{eq: bound key 0 eq 2}
\end{equation}

We also notice that on the event $\Acal$, 
\begin{align*}
|H| & =|F'(FL'+\varepsilon')\hL\htau_{n}^{-2}|\\
 & =|\|F\|_{2}^{2}L'\hL\htau_{n}^{-2}+F'\varepsilon'\hL\htau_{n}^{-2}|\\
 & \leq\|F\|_{2}^{2}\cdot\|L\|_{2}\cdot\|\hL\|_{2}\htau_{n}^{-2}+\|F\|_{2}\cdot\|\varepsilon\|\cdot\|\hL\|_{2}\htau_{n}^{-2}\\
 & =\|F\|_{2}^{2}\cdot\|L\|_{2}^{2}\htau_{n}^{-2}+\|F\|_{2}\cdot\|\varepsilon\|\cdot\|L\|_{2}\htau_{n}^{-2}\\
 & \leq\btau^{2}\htau_{n}^{-2}+\btau\cdot3\sqrt{n+T}\cdot\htau_{n}^{-2}\\
 & \overset{\texti}{\leq}16/9+(16/9)\cdot3\sqrt{n+T}/\btau<4,
\end{align*}
where (i) follows by (\ref{eq: bound key 0 eq 2}) and $\btau\geq13\sqrt{n+T}$.
Also observe that on the event $\Acal$,
\begin{align*}
\|\Delta_{L}\|_{2} & =\|\varepsilon FL'\hL\htau_{n}^{-2}+\varepsilon\varepsilon'\hL\htau_{n}^{-2}\|_{2}\\
 & \leq\|\varepsilon F\|_{2}\cdot\|L\|_{2}\cdot\|\hL\|_{2}\htau_{n}^{-2}+\|\varepsilon\|^{2}\cdot\|\hL\|_{2}\htau_{n}^{-2}\\
 & =\|\varepsilon F\|_{2}\cdot\|L\|_{2}^{2}\htau_{n}^{-2}+\|\varepsilon\|^{2}\cdot\|L\|_{2}\htau_{n}^{-2}\\
 & \leq\|\varepsilon F\|_{2}\cdot\btau^{2}T^{-1}\htau_{n}^{-2}+9(n+T)\cdot\btau T^{-1/2}\htau_{n}^{-2}\\
 & \leq\|\varepsilon F\|_{2}\cdot T^{-1}\cdot(4/3)^{2}+9(n+T)\cdot(4/3)^{2}\cdot T^{-1/2}\btau^{-1}\\
 & =\|\varepsilon F\|_{2}\cdot T^{-1}\cdot(16/9)+16(n+T)\cdot T^{-1/2}\btau^{-1}\\
 & \leq\|\varepsilon F\|_{2}\cdot T^{-1}\cdot(16/9)+16(n+T)\cdot T^{-1/2}/(13\sqrt{n+T})
\end{align*}
Since $\|F\|_{2}=\sqrt{T}$, $\varepsilon FT^{-1/2}\sim N(0,I_{n})$.
By Lemma 1 of \citet{laurent2000adaptive}, we have that $\PP(\|\varepsilon F\|_{2}^{2}T^{-1}-n>2(\sqrt{nx}+x))\leq\exp(-x)$
for any $x\geq0$. Taking $x=n$, this means that with probability
approaching one, $\|\varepsilon F\|_{2}^{2}T^{-1}\leq5n$, which means
$\|\varepsilon F\|_{2}\leq\sqrt{5Tn}$. Therefore, the above display
implies that with probability approaching one, we have
\[
\|\Delta_{L}\|_{2}\leq\sqrt{5Tn}\cdot T^{-1}\cdot(16/9)+16(n+T)\cdot T^{-1/2}/(13\sqrt{n+T})<8\sqrt{1+n/T}.
\]

The proof is complete. 
\end{proof}
\begin{lem}
\label{lem: bound key 1}$|\varepsilon_{(1)}'\varepsilon'\Delta_{L}|\lessp(n+T)T^{-1/2}+\btau^{-2}(n+T)T\cdot|L_{1}|$. 
\end{lem}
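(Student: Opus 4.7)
The plan is to isolate $\varepsilon_{(1)}$ from the rest of the randomness by constructing a surrogate of $\Delta_L$ that is independent of the first row of $\varepsilon$, handle the surrogate by Gaussian concentration, and then control the residual by exploiting the eigenvector identities to expose the $|L_1|$ factor.

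For the decoupling, I would let $\bar{\varepsilon}$ denote $\varepsilon$ with its first row replaced by zero and set $\bar{W}=LF'+\bar{\varepsilon}$, so that $\bar{W}$ differs from $W$ only in its first row, which for $\bar{W}$ equals the pure signal $L_{1}F'$. Let $\bar{L}$ and $\bar{\tau}_{n}$ be the rescaled top eigenvector and top singular value of $\bar{W}\bar{W}'$, and define $\bar{\Delta}_{L}=\bar{\varepsilon}\bar{W}'\bar{L}/\bar{\tau}_{n}^{2}$. By construction $\bar{L},\bar{\tau}_{n},\bar{\Delta}_{L}$ are functions of $\{\varepsilon_{(i)}\}_{i\ge 2}$ only, hence independent of $\varepsilon_{(1)}$; moreover $(\bar{\Delta}_{L})_{1}=0$ since $\bar{\varepsilon}$ has a zero first row. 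Repeating the arguments of Lemma \ref{lem: bound key 0} on the barred system yields $\|\bar{\Delta}_{L}\|_{2}\lessp\sqrt{1+n/T}$ and $\|\bar{\varepsilon}\|\lessp\sqrt{n+T}$.

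Write $\varepsilon_{(1)}'\varepsilon'\Delta_{L}=\varepsilon_{(1)}'\varepsilon'\bar{\Delta}_{L}+\varepsilon_{(1)}'\varepsilon'(\Delta_{L}-\bar{\Delta}_{L})$. Since $(\bar{\Delta}_{L})_{1}=0$, we have $\varepsilon'\bar{\Delta}_{L}=\bar{\varepsilon}'\bar{\Delta}_{L}$, a vector independent of $\varepsilon_{(1)}$, so conditioning on $\{\varepsilon_{(i)}\}_{i\ge 2}$ the inner product $\varepsilon_{(1)}'\bar{\varepsilon}'\bar{\Delta}_{L}$ is centered Gaussian with standard deviation at most $\|\bar{\varepsilon}\|\|\bar{\Delta}_{L}\|_{2}\lessp(n+T)/\sqrt{T}$. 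Gaussian tail control then gives the first summand $(n+T)T^{-1/2}$ of the target bound.

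For the residual $\varepsilon_{(1)}'\varepsilon'(\Delta_{L}-\bar{\Delta}_{L})$, I would expand using $\varepsilon-\bar{\varepsilon}=e_{1}\varepsilon_{(1)}'$ and $W-\bar{W}=e_{1}\varepsilon_{(1)}'$ together with the eigenvector equations for $\hat{L}$ and $\bar{L}$. Two observations make the $|L_{1}|$ factor visible. First, any scalar of the form $\hat{L}_{1}$ appearing in the expansion satisfies $\hat{L}_{1}=L_{1}H+(\Delta_{L})_{1}$ with $|H|\le 4$ and $|(\Delta_{L})_{1}|\lessp\sqrt{1+n/T}$, so such terms split into an $|L_{1}|$-bearing main piece and a controllable remainder. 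Second, the $L$-aligned piece of $\hat{L}-\bar{L}$ couples to $\varepsilon_{(1)}'\varepsilon'$ through the identity $\varepsilon_{(1)}'\varepsilon' L=L_{1}\|\varepsilon_{(1)}\|_{2}^{2}+\sum_{i\ge 2}L_{i}\varepsilon_{(1)}'\varepsilon_{(i)}\lessp|L_{1}|T+\btau$, which again yields an $|L_{1}|$-scaled contribution together with a Gaussian remainder bounded by $\btau$. Combining these with the crude bounds $\|\varepsilon_{(1)}\|_{2}\lessp\sqrt{T}$, $\|\varepsilon\|\lessp\sqrt{n+T}$, and $\hat{\tau}_{n}^{2}\asymp\btau^{2}$ delivers the second summand $\btau^{-2}(n+T)T|L_{1}|$.

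The main obstacle is precisely the last expansion: a generic Davis--Kahan bound on $\|\hat{L}-\bar{L}\|_{2}$ does not exhibit an $|L_{1}|$ factor, and a blind Cauchy--Schwarz on $\varepsilon_{(1)}'\varepsilon'(\hat{L}-\bar{L})$ produces pieces of order $\sqrt{n(n+T)}$ that exceed $(n+T)/\sqrt{T}$ when $L_{1}=0$. The resolution is to avoid bounding $\|\hat{L}-\bar{L}\|_{2}$ as a whole and instead manipulate the self-consistent eigenvector equation, decomposing every appearance of $\hat{L}$ into its $L$-aligned part (which carries $L_{1}$ through the first coordinate) and its orthogonal complement (which telescopes against the decoupled surrogate $\bar{\Delta}_{L}$ and is absorbed into the Gaussian concentration budget of the first step).
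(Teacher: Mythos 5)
Your decoupling setup is a genuinely different starting point from the paper. Where the paper splits $\varepsilon'\varepsilon = Q + \varepsilon_{(1)}\varepsilon_{(1)}'$ (with $Q = \sum_{j\ge 2}\varepsilon_{(j)}\varepsilon_{(j)}'$) and resums $(I-\htau_n^{-2}Q)^{-1}$ as a Neumann series, you construct a leave-one-out surrogate $\bar L,\bar\tau_n,\bar\Delta_L$ that is independent of $\varepsilon_{(1)}$. The first half of your argument is sound: $(\bar\Delta_L)_1=0$ gives $\varepsilon'\bar\Delta_L=\bar\varepsilon'\bar\Delta_L$, the analogues of Lemma \ref{lem: bound key 0} do transfer to the barred system, and conditional Gaussian concentration yields $|\varepsilon_{(1)}'\bar\varepsilon'\bar\Delta_L|\lessp\|\bar\varepsilon\|\|\bar\Delta_L\|_2\lessp(n+T)T^{-1/2}$, matching the first summand of the target.

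The gap is in the residual. Using $\varepsilon'\bar\varepsilon=Q$ and $W'-\bar W'=\varepsilon_{(1)}e_1'$, the residual is
\begin{equation*}
\varepsilon_{(1)}'\varepsilon'(\Delta_L-\bar\Delta_L)=\varepsilon_{(1)}'Q\bar W'\bigl(\htau_n^{-2}\hL-\bar\tau_n^{-2}\bar L\bigr)+\varepsilon_{(1)}'Q\varepsilon_{(1)}\,\hL_1\htau_n^{-2}+\|\varepsilon_{(1)}\|_2^2\htau_n^{-2}\varepsilon_{(1)}'W'\hL.
\end{equation*}
The second and third summands are controllable with the observations you list (the split $\hL_1=L_1H+\Delta_{L,1}$, the identity $\varepsilon_{(1)}'\varepsilon'L\lessp T|L_1|+\btau$, and absorbing the feedback coefficient $\|\varepsilon_{(1)}\|_2^2\htau_n^{-2}\leq 0.3$). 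The first summand is where the plan fails. Split $\bar W=LF'+\bar\varepsilon$ and $\hL=LH+\Delta_L$, $\bar L=L\bar H+\bar\Delta_L$. The $LF'$ and $L$-aligned pieces, and the $\bar\Delta_L$ piece, are independent of $\varepsilon_{(1)}$ up to known scalars and do come out of the required order. But the remaining piece, $\htau_n^{-2}\,\varepsilon_{(1)}'Q\bar\varepsilon'\Delta_L$, still depends on $\varepsilon_{(1)}$ through $\Delta_L$ and $\htau_n$, so Gaussian concentration does not apply, and the only bound available from the crude ingredients you list is deterministic: $\|\varepsilon_{(1)}\|_2\cdot\|Q\bar\varepsilon'\|\cdot\|\Delta_L\|_2\cdot\htau_n^{-2}\lessp\sqrt{T}\cdot(n+T)^{3/2}\cdot\sqrt{1+n/T}\cdot\btau^{-2}=(n+T)^{2}\btau^{-2}$. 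This exceeds $(n+T)T^{-1/2}$ by a factor of $(n+T)\sqrt{T}/\btau^{2}$, which under the standing assumption $\btau\gtrsim\sqrt{n+T}$ can be as large as $\sqrt{T}$; so when $L_1=0$ the bound blows up. You correctly diagnose that Davis--Kahan and a blind Cauchy--Schwarz fail, but the proposed cure---that the orthogonal complement ``telescopes against $\bar\Delta_L$'' and is ``absorbed into the Gaussian concentration budget''---is not an argument: the $\bar\Delta_L$ part does absorb, but the $\Delta_L$ part does not, and to close it one would have to re-expand $\Delta_L=\varepsilon W'\hL\htau_n^{-2}$ and iterate, which is exactly the paper's self-consistent Neumann-series device. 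At that point the leave-one-out scaffold is not saving work. The residual term $\htau_n^{-2}\varepsilon_{(1)}'Q\bar\varepsilon'\Delta_L$ therefore remains an unproved step, and it is precisely the step where the paper's specific construction earns its keep.
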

\begin{proof}[\textbf{Proof of Lemma \ref{lem: bound key 1}}]
 Define $Q=\sum_{j=2}^{n}\varepsilon_{(j)}\varepsilon_{(j)}'$. Then
$\varepsilon'\varepsilon=Q+\varepsilon_{(1)}\varepsilon_{(1)}'$.
By Corollary 5.35 of \citet{vershynin2010introduction}, with probability
at least $1-2\exp(-(n-1)/2)$, $\sqrt{\|Q\|}\leq2\sqrt{n-1}+\sqrt{T-1}\leq3\sqrt{n+T}$.
Thus, with probability approaching one, $\|Q\|\leq9(n+T)$. By Lemma
\ref{lem: bound key 0}, $\|\htau_{n}^{-2}Q\|\leq16/13^{2}<0.1$ with
probability approaching one. We now define the following event 
\[
\Acal=\left\{ \|Q\|\leq9(n+T)\right\} \bigcap\left\{ \|\htau_{n}^{-2}Q\|\leq0.1\right\} \bigcap\left\{ \htau_{n}\geq3\btau/4\right\} \bigcap\left\{ \|\varepsilon\|\leq\htau_{n}/2\right\} .
\]

By Lemma \ref{lem: bound key 0}, $\PP(\Acal)\rightarrow1$. We proceed
in three steps.

\textbf{Step 1:} write $\varepsilon'\Delta_{L}$ as a power series.

Notice that 
\begin{align*}
\varepsilon'\Delta_{L} & =\varepsilon'\varepsilon W'\hL\htau_{n}^{-2}\\
 & =\varepsilon'\varepsilon W'LH\htau_{n}^{-2}+\varepsilon'\varepsilon W'\Delta_{L}\htau_{n}^{-2}\\
 & =\varepsilon'\varepsilon W'LH\htau_{n}^{-2}+\varepsilon'\varepsilon FL'\Delta_{L}\htau_{n}^{-2}+\varepsilon'\varepsilon\varepsilon'\Delta_{L}\htau_{n}^{-2}\\
 & =\varepsilon'\varepsilon W'LH\htau_{n}^{-2}+\varepsilon'\varepsilon FL'\Delta_{L}\htau_{n}^{-2}+\varepsilon_{(1)}\varepsilon_{(1)}'\varepsilon'\Delta_{L}\htau_{n}^{-2}+Q\varepsilon'\Delta_{L}\htau_{n}^{-2}.
\end{align*}

Therefore,
\[
(I-\htau_{n}^{-2}Q)\varepsilon'\Delta_{L}=\varepsilon'\varepsilon W'LH\htau_{n}^{-2}+\varepsilon'\varepsilon FL'\Delta_{L}\htau_{n}^{-2}+\varepsilon_{(1)}\varepsilon_{(1)}'\varepsilon'\Delta_{L}\htau_{n}^{-2}.
\]

Since $\|\htau_{n}^{-2}Q\|\leq0.1$ on the event $\Acal$, we have
that 
\[
\varepsilon'\Delta_{L}=\sum_{i=0}^{\infty}(\htau_{n}^{-2}Q)^{i}\left(\varepsilon'\varepsilon W'LH\htau_{n}^{-2}+\varepsilon'\varepsilon FL'\Delta_{L}\htau_{n}^{-2}+\varepsilon_{(1)}\varepsilon_{(1)}'\varepsilon'\Delta_{L}\htau_{n}^{-2}\right)
\]
and thus 
\begin{align*}
\varepsilon_{(1)}'\varepsilon'\Delta_{L} & =\varepsilon_{(1)}'\sum_{i=0}^{\infty}(\htau_{n}^{-2}Q)^{i}\left(\varepsilon'\varepsilon W'LH\htau_{n}^{-2}+\varepsilon'\varepsilon FL'\Delta_{L}\htau_{n}^{-2}+\varepsilon_{(1)}\varepsilon_{(1)}'\varepsilon'\Delta_{L}\htau_{n}^{-2}\right)\\
 & =\varepsilon_{(1)}'\sum_{i=0}^{\infty}(\htau_{n}^{-2}Q)^{i}\left(\varepsilon'\varepsilon W'LH\htau_{n}^{-2}+\varepsilon'\varepsilon FL'\Delta_{L}\htau_{n}^{-2}\right)+\left[\varepsilon_{(1)}'\sum_{i=0}^{\infty}(\htau_{n}^{-2}Q)^{i}\varepsilon_{(1)}\htau_{n}^{-2}\right]\varepsilon_{(1)}'\varepsilon'\Delta_{L}.
\end{align*}

On the event $\Acal$, it follows that 
\[
\varepsilon_{(1)}'\sum_{i=0}^{\infty}(\htau_{n}^{-2}Q)^{i}\varepsilon_{(1)}\htau_{n}^{-2}\leq\|\varepsilon_{(1)}\|_{2}^{2}\htau_{n}^{-2}\sum_{i=0}^{\infty}\|\htau_{n}^{-2}Q\|^{i}=\frac{\|\varepsilon_{(1)}\|_{2}^{2}\htau_{n}^{-2}}{1-\|\htau_{n}^{-2}Q\|}\leq\frac{\|\varepsilon\|^{2}\htau_{n}^{-2}}{0.9}\leq\frac{2^{-2}}{0.9}\leq0.3.
\]

The above two displays imply that on the event $\Acal$,
\begin{equation}
0.7\left|\varepsilon_{(1)}'\varepsilon'\Delta_{L}\right|\leq\left|\underset{S_{1}}{\underbrace{\htau_{n}^{-2}\varepsilon_{(1)}'\sum_{i=0}^{\infty}(\htau_{n}^{-2}Q)^{i}\varepsilon'\varepsilon W'LH}}+\underset{S_{2}}{\underbrace{\htau_{n}^{-2}\varepsilon_{(1)}'\sum_{i=0}^{\infty}(\htau_{n}^{-2}Q)^{i}\varepsilon'\varepsilon FL'\Delta_{L}}}\right|.\label{eq: bnd key 1 eq 5}
\end{equation}

\textbf{Step 2:} bound $S_{2}$.

We will repeatedly use the following fact: if $\xi$ is independent
of $\varepsilon_{(1)}$, then $|\xi'\varepsilon_{(1)}|\lessp\|\xi\|_{2}$.
Notice that 
\begin{equation}
|S_{2}|=|L'\Delta_{L}|\cdot\htau_{n}^{-2}\left|\varepsilon_{(1)}'\sum_{i=0}^{\infty}(\htau_{n}^{-2}Q)^{i}(Q+\varepsilon_{(1)}\varepsilon_{(1)}')F\right|.\label{eq: bnd key 1 eq 6}
\end{equation}

Define the constant $C_{0}=\EE|\xi|$, where $\xi\sim N(0,1)$. Hence
$C_{0}=2\cdot(2\pi)^{-1}\int_{0}^{\infty}x\exp(-x^{2}/2)dx<1$. We
define 
\[
R:=\sum_{i=0}^{\infty}\left|\varepsilon_{(1)}'(16\btau^{-2}Q/9)^{i}QF\right|.
\]

Since $\varepsilon_{(1)}\sim N(0,I_{T-1})$ is independent of $Q$
and $F$, we have that $\varepsilon_{(1)}'(16\btau^{-2}Q/9)^{i}QF$
given $(Q,F)$ has a normal distribution with mean zero and standard
deviation $\|(16\btau^{-2}Q/9)^{i}QF\|_{2}$. Therefore, 
\[
\EE\left\{ \left|\varepsilon_{(1)}'(16\btau^{-2}Q/9)^{i}QF\right|\mid Q,F\right\} =C_{0}\|(16\btau^{-2}Q/9)^{i}QF\|_{2}\leq\|16\btau^{-2}Q/9\|^{i}\cdot\|Q\|\cdot\|F\|_{2}.
\]

Consider the event $\Acal_{0}=\{16\btau^{-2}\|Q\|/9\leq16/13^{2}\}\bigcap\{\|Q\|\leq9(n+T)\}$.
Then on the event $\Acal_{0}$, 
\[
\EE(R\mid Q,F)\leq\sum_{i=0}^{\infty}\|16\btau^{-2}Q/9\|^{i}\cdot\|Q\|\cdot\|F\|_{2}=\frac{\|Q\|\cdot\|F\|_{2}}{1-\|16\btau^{-2}Q/9\|}\leq\frac{9(n+T)\cdot\sqrt{T}}{1-16/13^{2}}\leq10(n+T)\sqrt{T}.
\]

Since $\PP(\Acal_{0})\rightarrow1$ (due to Lemma \ref{lem: bound key 0}),
it follows that 
\[
R\lessp(n+T)\sqrt{T}.
\]

Now we observe that on the event $\Acal$, $\left|\varepsilon_{(1)}'(16\btau^{-2}Q/9)^{i}QF\right|\geq\left|\varepsilon_{(1)}'(\htau_{n}^{-2}Q)^{i}QF\right|$
and thus
\begin{equation}
\left|\varepsilon_{(1)}'\sum_{i=0}^{\infty}(\htau_{n}^{-2}Q)^{i}QF\right|\leq\sum_{i=0}^{\infty}\left|\varepsilon_{(1)}'(\htau_{n}^{-2}Q)^{i}QF\right|\leq\sum_{i=0}^{\infty}\left|\varepsilon_{(1)}'(16\btau^{-2}Q/9)^{i}QF\right|=R\lessp(n+T)\sqrt{T}.\label{eq: bnd key 1 eq 7}
\end{equation}

Therefore, we have that on the event $\Acal$, 
\begin{align}
 & \left|\varepsilon_{(1)}'\sum_{i=0}^{\infty}(\htau_{n}^{-2}Q)^{i}(Q+\varepsilon_{(1)}\varepsilon_{(1)}')F\right|\nonumber \\
 & \leq\left|\varepsilon_{(1)}'\sum_{i=0}^{\infty}(\htau_{n}^{-2}Q)^{i}QF\right|+\left|\varepsilon_{(1)}'\sum_{i=0}^{\infty}(\htau_{n}^{-2}Q)^{i}\varepsilon_{(1)}\varepsilon_{(1)}'F\right|\nonumber \\
 & \lessp(n+T)\sqrt{T}+\left|\varepsilon_{(1)}'\sum_{i=0}^{\infty}(\htau_{n}^{-2}Q)^{i}\varepsilon_{(1)}\varepsilon_{(1)}'F\right|\nonumber \\
 & \leq(n+T)T^{1/2}+\left(\sum_{i=0}^{\infty}\left|\varepsilon_{(1)}'(\htau_{n}^{-2}Q)^{i}\varepsilon_{(1)}\right|\right)\cdot|\varepsilon_{(1)}'F|\nonumber \\
 & \leq(n+T)T^{1/2}+\sum_{i=0}^{\infty}(0.1)^{i}\cdot\|\varepsilon_{(1)}\|_{2}^{2}\cdot|\varepsilon_{(1)}'F|\nonumber \\
 & \lesssim(n+T)T^{1/2}+\|\varepsilon_{(1)}\|_{2}^{2}\cdot\|\varepsilon_{(1)}'F\|_{2}\overset{\texti}{\lessp}(n+T)T^{1/2}+n\cdot\sqrt{T}\lesssim(n+T)T^{1/2},\label{eq: bnd key 1 eq 8}
\end{align}
where (i) follows by $\varepsilon_{(1)}'F\sim N(0,\|F\|_{2}^{2})$
and $\|F\|_{2}=\sqrt{T}$ as well as $\EE\|\varepsilon_{(1)}\|_{2}^{2}=n$.
Therefore, by (\ref{eq: bnd key 1 eq 6}), we have that 
\[
|S_{2}|\lessp|L'\Delta_{L}|\cdot\htau_{n}^{-2}\cdot(n+T)T^{1/2}\leq\|L\|_{2}\cdot\|\Delta_{L}\|_{2}\cdot\htau_{n}^{-2}\cdot(n+T)T^{1/2}\overset{\texti}{\lessp}(n+T)\sqrt{1+n/T}\btau^{-1}.
\]
where (i) follows by $\|L\|_{2}=\btau T^{-1/2}$ and bounds for $\|\Delta_{L}\|_{2}$
and $\htau_{n}$ in Lemma \ref{lem: bound key 0}. 

\textbf{Step 3:} bound $S_{1}$

Notice that 
\begin{align*}
\htau_{n}^{-2}\left|\varepsilon_{(1)}'\sum_{i=0}^{\infty}(\htau_{n}^{-2}Q)^{i}\varepsilon'\varepsilon W'L\right| & =\htau_{n}^{-2}\left|\varepsilon_{(1)}'\sum_{i=0}^{\infty}(\htau_{n}^{-2}Q)^{i}\varepsilon'\varepsilon(FL'+\varepsilon')L\right|\\
 & \leq\htau_{n}^{-2}\left|\varepsilon_{(1)}'\sum_{i=0}^{\infty}(\htau_{n}^{-2}Q)^{i}\varepsilon'\varepsilon F\right|\|L\|_{2}^{2}+\htau_{n}^{-2}\left|\varepsilon_{(1)}'\sum_{i=0}^{\infty}(\htau_{n}^{-2}Q)^{i}\varepsilon'\varepsilon\varepsilon'L\right|.
\end{align*}

In (\ref{eq: bnd key 1 eq 8}), we have proved that 
\[
\left|\varepsilon_{(1)}'\sum_{i=0}^{\infty}(\htau_{n}^{-2}Q)^{i}\varepsilon'\varepsilon F\right|\lessp(n+T)T^{1/2}.
\]

Therefore, by $\|L\|_{2}=\btau/\sqrt{T}$, we have 
\begin{equation}
\htau_{n}^{-2}\left|\varepsilon_{(1)}'\sum_{i=0}^{\infty}(\htau_{n}^{-2}Q)^{i}\varepsilon'\varepsilon W'L\right|\lessp(n+T)T^{-1/2}+\btau^{-2}\left|\varepsilon_{(1)}'\sum_{i=0}^{\infty}(\htau_{n}^{-2}Q)^{i}\varepsilon'\varepsilon\varepsilon'L\right|.\label{eq: bnd key 1 eq 12}
\end{equation}

Observe that on the event $\Acal$, 
\begin{align*}
 & \left|\varepsilon_{(1)}'\sum_{i=0}^{\infty}(\htau_{n}^{-2}Q)^{i}\varepsilon'\varepsilon\varepsilon'L\right|\\
 & =\left|\varepsilon_{(1)}'\sum_{i=0}^{\infty}(\htau_{n}^{-2}Q)^{i}(Q+\varepsilon_{(1)}\varepsilon_{(1)}')\varepsilon'L\right|\\
 & \leq\left|\varepsilon_{(1)}'\sum_{i=0}^{\infty}(\htau_{n}^{-2}Q)^{i}Q\varepsilon'L\right|+\left|\varepsilon_{(1)}'\sum_{i=0}^{\infty}(\htau_{n}^{-2}Q)^{i}\varepsilon_{(1)}\right|\cdot|\varepsilon_{(1)}'\varepsilon'L|\\
 & \leq\left|\varepsilon_{(1)}'\sum_{i=0}^{\infty}(\htau_{n}^{-2}Q)^{i}Q\varepsilon'L\right|+\left(\sum_{i=0}^{\infty}\|\htau_{n}^{-2}Q\|^{i}\right)\cdot\|\varepsilon_{(1)}\|_{2}^{2}\cdot|\varepsilon_{(1)}'\varepsilon'L|\\
 & \leq\left|\varepsilon_{(1)}'\sum_{i=0}^{\infty}(\htau_{n}^{-2}Q)^{i}Q\varepsilon'L\right|+\frac{1}{1-0.1}\cdot\|\varepsilon_{(1)}\|_{2}^{2}\cdot|\varepsilon_{(1)}'\varepsilon'L|\\
 & \leq\left|\varepsilon_{(1)}'\sum_{i=0}^{\infty}(\htau_{n}^{-2}Q)^{i}Q\varepsilon_{(1)}L_{1}\right|+\left|\varepsilon_{(1)}'\sum_{i=0}^{\infty}(\htau_{n}^{-2}Q)^{i}Q\sum_{j=2}^{n}\varepsilon_{(j)}L_{j}\right|+2\|\varepsilon_{(1)}\|_{2}^{2}\cdot|\varepsilon_{(1)}'\varepsilon'L|.
\end{align*}

By the same argument as in (\ref{eq: bnd key 1 eq 7}) with $F$ replaced
by $\sum_{j=2}^{n}\varepsilon_{(j)}L_{j}$, we can show that 
\begin{multline*}
\left|\varepsilon_{(1)}'\sum_{i=0}^{\infty}(\htau_{n}^{-2}Q)^{i}Q\sum_{j=2}^{n}\varepsilon_{(j)}L_{j}\right|\lessp\|Q\|\cdot\left\Vert \sum_{j=2}^{n}\varepsilon_{(j)}L_{j}\right\Vert _{2}\\
\lessp\|Q\|\cdot\sqrt{T-1}\cdot\|L_{-1}\|_{2}\leq\|Q\|\cdot\sqrt{T}\cdot\|L\|_{2}\lessp(n+T)\btau.
\end{multline*}

On the event $\Acal$, 
\begin{align*}
\left|\varepsilon_{(1)}'\sum_{i=0}^{\infty}(\htau_{n}^{-2}Q)^{i}Q\varepsilon_{(1)}L_{1}\right| & \leq\sum_{i=0}^{\infty}\|\htau_{n}^{-2}Q\|^{i}\cdot\|Q\|\cdot\|\varepsilon_{(1)}\|_{2}^{2}\cdot|L_{1}|\\
 & \lessp\|Q\|\cdot\|\varepsilon_{(1)}\|_{2}^{2}\cdot|L_{1}|\lessp(n+T)T\cdot|L_{1}|.
\end{align*}

Finally, we notice that 
\begin{align*}
|\varepsilon_{(1)}'\varepsilon'L| & =\left|\varepsilon_{(1)}'\varepsilon_{(1)}L_{1}+\sum_{j=2}^{n}\varepsilon_{(1)}'\varepsilon_{(j)}L_{j}\right|\\
 & \leq\|\varepsilon_{(1)}\|_{2}^{2}\cdot|L_{1}|+\left|\sum_{j=2}^{n}\varepsilon_{(1)}'\varepsilon_{(j)}L_{j}\right|\\
 & \overset{\texti}{\lessp}T|L_{1}|+\left\Vert \sum_{j=2}^{n}\varepsilon_{(j)}L_{j}\right\Vert _{2}\\
 & \lessp T|L_{1}|+\sqrt{T-1}\cdot\|L_{-1}\|_{2}\leq T|L_{1}|+\sqrt{T}\cdot\|L\|_{2}=T|L_{1}|+\btau,
\end{align*}
where (i) follows again by the fact that $\sum_{j=2}^{n}\varepsilon_{(1)}'\varepsilon_{(j)}L_{j}$
given $\{\varepsilon_{(j)}\}_{2\leq j\leq n}$ is normal with mean
zero and standard deviation $\|\sum_{j=2}^{n}\varepsilon_{(j)}L_{j}\|_{2}$.
The above four displays imply that 
\begin{equation}
\left|\varepsilon_{(1)}'\sum_{i=0}^{\infty}(\htau_{n}^{-2}Q)^{i}\varepsilon'\varepsilon\varepsilon'L\right|\lessp(n+T)T\cdot|L_{1}|+(n+T)\btau.\label{eq: bnd key 1 eq 13}
\end{equation}

Now we combine (\ref{eq: bnd key 1 eq 12}) and (\ref{eq: bnd key 1 eq 13}),
obtaining 
\begin{align*}
\htau_{n}^{-2}\left|\varepsilon_{(1)}'\sum_{i=0}^{\infty}(\htau_{n}^{-2}Q)^{i}\varepsilon'\varepsilon W'L\right| & \lessp(n+T)T^{-1/2}+\btau^{-2}\left[(n+T)T\cdot|L_{1}|+(n+T)\btau\right]\\
 & \lessp(n+T)T^{-1/2}+\btau^{-2}(n+T)T\cdot|L_{1}|.
\end{align*}

Since $H=O_{P}(1)$ (due to Lemma \ref{lem: bound key 0}), we have
\[
|S_{1}|\lessp(n+T)T^{-1/2}+\btau^{-2}(n+T)T\cdot|L_{1}|.
\]

The proof is complete by combining (\ref{eq: bnd key 1 eq 5}) with
the bounds for $S_{1}$ and $S_{2}$ as well as the fact that $\sqrt{n+T}\lesssim\btau$. 
\end{proof}
\begin{lem}
\label{lem: bound key 2}$|\Delta_{L,1}|\lessp T^{-1/2}+|L_{1}|\cdot\btau^{-2}T$,
where $\Delta_{L,1}$ is the first entry of $\Delta_{L}=(\Delta_{L,1},...,\Delta_{L,n})'\in\RR^{n}$. 
\end{lem}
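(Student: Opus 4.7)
The plan is to start from the definition $\Delta_{L,1} = \hat{\tau}_n^{-2}\, \varepsilon_{(1)}' W' \hat{L}$ and expand both $W = LF' + \varepsilon$ and $\hat{L} = LH + \Delta_L$, so that
\[
\Delta_{L,1} = \hat{\tau}_n^{-2}\Bigl[(\varepsilon_{(1)}'F)\,\|L\|_2^2\, H \;+\; (\varepsilon_{(1)}'F)\, L'\Delta_L \;+\; H\,\varepsilon_{(1)}'\varepsilon' L \;+\; \varepsilon_{(1)}'\varepsilon'\Delta_L\Bigr].
\]
I would then bound each of the four pieces separately and check that all of them fit into the target rate $T^{-1/2} + |L_1|\bar{\tau}^{-2}T$, using Lemma \ref{lem: bound key 0} to supply $H = O_P(1)$, $\hat{\tau}_n \gtrsim \bar{\tau}$, $\|\Delta_L\|_2 \lesssim_P \sqrt{1+n/T}$, and the normalization $\|L\|_2 = \bar{\tau}/\sqrt{T}$.

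For the first piece, $\varepsilon_{(1)}'F \sim N(0,\|F\|_2^2) = N(0,T)$ gives $|\varepsilon_{(1)}'F| \lesssim_P \sqrt{T}$, so the term is $\lesssim_P \sqrt{T}\cdot(\bar{\tau}^2/T)\cdot\bar{\tau}^{-2} = T^{-1/2}$. The second piece uses Cauchy--Schwarz: $|L'\Delta_L| \leq \|L\|_2\|\Delta_L\|_2 \lesssim_P (\bar{\tau}/\sqrt{T})\sqrt{1+n/T}$, which yields a bound $\lesssim_P \sqrt{1+n/T}/\bar{\tau} \lesssim T^{-1/2}$ since $\bar{\tau}^2 \gtrsim n+T$. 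For the third piece, I would write
\[
\varepsilon_{(1)}'\varepsilon'L = \|\varepsilon_{(1)}\|_2^2\, L_1 + \sum_{j=2}^{n}\varepsilon_{(1)}'\varepsilon_{(j)}L_j,
\]
and use independence of $\varepsilon_{(1)}$ from $\{\varepsilon_{(j)}\}_{j\geq 2}$: conditionally on the latter, the cross term is Gaussian with variance $\|\sum_{j\geq 2}\varepsilon_{(j)}L_j\|_2^2 \lesssim_P T\|L_{-1}\|_2^2 \leq T\|L\|_2^2 = \bar{\tau}^2$. Hence $|\varepsilon_{(1)}'\varepsilon'L| \lesssim_P T|L_1| + \bar{\tau}$, and after multiplying by $\hat{\tau}_n^{-2} \lesssim \bar{\tau}^{-2}$ we obtain $T|L_1|\bar{\tau}^{-2} + \bar{\tau}^{-1}$, which is within the target rate.

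The fourth piece $\hat{\tau}_n^{-2}\varepsilon_{(1)}'\varepsilon'\Delta_L$ is exactly what Lemma \ref{lem: bound key 1} bounds: it is $\lesssim_P \bar{\tau}^{-2}\bigl[(n+T)T^{-1/2} + \bar{\tau}^{-2}(n+T)T|L_1|\bigr]$, and since $(n+T)/\bar{\tau}^2 \lesssim 1$ this collapses to $T^{-1/2} + \bar{\tau}^{-2}T|L_1|$. Adding up all four pieces gives the claim. The main conceptual obstacle is genuinely concentrated in the fourth term, where the self-referential appearance of $\Delta_L$ inside $\varepsilon_{(1)}'\varepsilon'\Delta_L$ prevents a naive Cauchy--Schwarz bound from being sharp enough; this is precisely why Lemma \ref{lem: bound key 1} (with its decoupling-style Neumann series trick) is invoked, and everything else is Gaussian tail estimates plus the already-established bounds on $H$, $\hat{\tau}_n$, and $\|\Delta_L\|_2$.
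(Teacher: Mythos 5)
Your proof is correct and takes essentially the same approach as the paper: expand $\Delta_{L,1}=\htau_n^{-2}\varepsilon_{(1)}'W'\hL$ using $W=LF'+\varepsilon$ and $\hL=LH+\Delta_L$, bound the resulting pieces with Lemma \ref{lem: bound key 0}, handle the self-referential term $\varepsilon_{(1)}'\varepsilon'\Delta_L$ by Lemma \ref{lem: bound key 1}, and control the remaining cross term $\varepsilon_{(1)}'\varepsilon'L$ by splitting off the $L_1$ contribution. The only differences are cosmetic: you split $\varepsilon_{(1)}'FL'\hL$ into two sub-pieces where the paper bounds it directly via $|L'\hL|\leq\|L\|_2\|\hL\|_2$, and your conditional-Gaussian calculation yields the cross-term bound $T|L_1|+\btau$, which is a slightly cleaner version of the paper's (which, incidentally, carries a harmless algebraic slip producing $T|L_1|+\sqrt{n}\btau$; both suffice since $\btau\gtrsim\sqrt{n+T}$).
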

\begin{proof}[\textbf{Proof of Lemma \ref{lem: bound key 2}}]
We first derive the following bound 
\begin{align}
\left|\varepsilon_{(1)}'\varepsilon'L\right| & =\left|\varepsilon_{(1)}'\varepsilon_{(1)}L_{1}+\sum_{i=2}^{n}\varepsilon_{(1)}'\varepsilon_{(i)}L_{i}\right|\nonumber \\
 & \overset{\texti}{\lessp}T|L_{1}|+\left|\sum_{i=2}^{n}\varepsilon_{(1)}'\varepsilon_{(i)}L_{i}\right|\nonumber \\
 & \overset{\textii}{\lessp}T|L_{1}|+\sqrt{nT}\|L_{-1}\|_{2}\leq T|L_{1}|+\sqrt{nT}\|L\|_{2}=T|L_{1}|+\sqrt{n}\btau,\label{eq: thm upper SC 9}
\end{align}
where (i) follows by $\EE\varepsilon_{(1)}'\varepsilon_{(1)}=T-1$
and (ii) follows by computing the second moment
\[
\EE\left(\sum_{i=2}^{n}\varepsilon_{(1)}'\varepsilon_{(i)}L_{i}\right)^{2}=\sum_{i=2}^{n}\EE(\varepsilon_{(1)}'\varepsilon_{(i)}L_{i})^{2}=\sum_{i=2}^{n}(T-1)L_{i}^{2}=n(T-1)\|L_{-1}\|_{2}^{2}.
\]

We use the definition $\Delta_{L}=\varepsilon W'\hL\htau_{n}^{-2}$
and obtain 
\begin{align*}
|\Delta_{L,1}| & =\left|\varepsilon_{(1)}'FL'\hL\htau_{n}^{-2}+\varepsilon_{(1)}'\varepsilon'\hL\htau_{n}^{-2}\right|\\
 & =\left|\varepsilon_{(1)}'FL'\hL\htau_{n}^{-2}+\varepsilon_{(1)}'\varepsilon'LH\htau_{n}^{-2}+\varepsilon_{(1)}'\varepsilon'\Delta_{L}\htau_{n}^{-2}\right|\\
 & \leq|\varepsilon_{(1)}'F|\cdot\|L\|_{2}\cdot\|\hL\|_{2}\htau_{n}^{-2}+|\varepsilon_{(1)}'\varepsilon'L|\cdot|H|\htau_{n}^{-2}+|\varepsilon_{(1)}'\varepsilon'\Delta_{L}|\cdot\htau_{n}^{-2}\\
 & \overset{\texti}{\lessp}\sqrt{T}\cdot\btau^{2}T^{-1}\cdot\btau^{-2}+|\varepsilon_{(1)}'\varepsilon'L|\cdot\btau^{-2}+|\varepsilon_{(1)}'\varepsilon'\Delta_{L}|\cdot\btau^{-2}\\
 & \overset{\textii}{\lessp}T^{-1/2}+T\btau^{-2}|L_{1}|+\sqrt{n}\btau^{-1}+|\varepsilon_{(1)}'\varepsilon'\Delta_{L}|\cdot\btau^{-2}\\
 & \overset{\textiii}{\lessp}T^{-1/2}+T\btau^{-2}|L_{1}|+\sqrt{n}\btau^{-1}+\left\{ (n+T)T^{-1/2}+\btau^{-2}(n+T)T\cdot|L_{1}|\right\} \btau^{-2}\\
 & \overset{\textiv}{\leq}T^{-1/2}+|L_{1}|\cdot\btau^{-2}T,
\end{align*}
where (i) follows by the bounds for $H$ and $\htau_{n}$ (Lemma \ref{lem: bound key 0}),
$\|L\|_{2}=\|\hL\|_{2}=\btau/\sqrt{T}$ as well as $\varepsilon_{(1)}'F\sim N(0,T-1)$,
(ii) follows by (\ref{eq: thm upper SC 9}), (iii) follows by Lemma
\ref{lem: bound key 1} and (iv) follows by $\sqrt{n+T}\lesssim\btau$. 
\end{proof}

\subsubsection{Proof of Theorem \ref{thm: upper bound 1 factor SC}}
\begin{proof}[\textbf{Proof of Theorem \ref{thm: upper bound 1 factor SC}}]
We observe that $X_{-1,1}=M_{-1,1}+u_{-1,1}$, where $u_{-1,1}=(u_{2,1},...,u_{n,1})\in\RR^{n-1}$
and $M_{-1,1}=L_{-1}f_{1}$ for some $f_{1}\in\RR$. We also have
$M_{1,1}=L_{1}f_{1}$. 

Notice that 
\begin{equation}
\hM_{1,1}-M_{1,1}=\underset{Q_{1}}{\underbrace{\frac{\hL_{1}\hL_{-1}'L_{-1}f_{1}}{\|\hL_{-1}\|_{2}^{2}}-L_{1}f_{1}}}+\underset{Q_{2}}{\underbrace{\frac{\hL_{1}\hL_{-1}'u_{-1,1}}{\|\hL_{-1}\|_{2}^{2}}}}.\label{eq: thm SC bnd eq 1}
\end{equation}

Since $\|L\|_{\infty}\cdot\|F\|_{\infty}\leq\|M\|_{\infty}\leq\kappa$
and $\|L\|_{2}=\btau T^{-1/2}$, we have that 
\begin{equation}
\|L\|_{\infty}\leq\frac{\kappa}{\|F\|_{\infty}}\overset{\text{(i)}}{\leq}\frac{\kappa\sqrt{T-1}}{\|F\|_{2}}=\frac{\kappa\sqrt{T-1}\|L\|_{2}}{\|F\|_{2}\|L\|_{2}}\leq\frac{\kappa\sqrt{T-1}\|L\|_{2}}{\btau}\leq\kappa,\label{eq: L bound 1}
\end{equation}
where (i) follows by $\|F\|_{2}\leq\sqrt{T-1}\|F\|_{\infty}$. 

By Lemma \ref{lem: bound key 0}, $\htau_{n}\geq3\btau/4$, $|H|\leq4$
and $\|\Delta_{L}\|_{2}\leq8\sqrt{1+n/T}$ with probability approaching
one. The rest of the proof proceeds in two steps. 

\textbf{Step 1:} Bound $Q_{2}$

Since $u_{-1,1}$ is independent of $W$ and $\hL_{-1}$ is computed
from $W$, we have that $u_{-1,1}$ and $\hL_{-1}$ are independent.
Due to the Gaussian distribution of $u_{-1,1}$, it follows that $|\hL_{-1}'u_{-1,1}|\lessp\|\hL_{-1}\|_{2}$.
Hence, $|Q_{2}|\lessp|\hL_{1}|/\|\hL_{-1}\|_{2}$. 

Since $\hL_{1}=L_{1}H+\Delta_{L,1}$, we have that with probability
approaching one, 
\[
|\hL_{1}|\leq|L_{1}|\cdot|H|+|\Delta_{L,1}|\overset{\texti}{\leq}4\kappa+\|\Delta_{L}\|_{2}\overset{\textii}{\leq}4\kappa+8\sqrt{1+n/T},
\]
where (i) follows by (\ref{eq: L bound 1}) and the bound for $H$
and (ii) follows by the bound for $\|\Delta_{L}\|_{2}$. Using $\|\hL\|_{2}=\|L\|_{2}=\btau/\sqrt{T}$,
we obtain that with probability approaching one, 
\[
|\hL_{1}|\cdot\|\hL\|_{2}^{-1}=|\hL_{1}|\cdot\|L\|_{2}^{-1}\leq4\kappa\sqrt{T}\btau^{-1}+\frac{8\sqrt{1+n/T}}{\btau/\sqrt{T}}=\frac{4\kappa\sqrt{T}}{\btau}+\frac{8\sqrt{T+n}}{\btau}.
\]
By Lemma \ref{lem: bound key 0}, $\btau\geq\max\{13,20.8\kappa\}\sqrt{n+T}$.
Hence, we have proved that with probability approaching one, 
\begin{equation}
|\hL_{1}|\cdot\|\hL\|_{2}^{-1}\le\frac{4\kappa\sqrt{T}}{20.8\kappa\sqrt{n+T}}+\frac{8\sqrt{T+n}}{13\sqrt{n+T}}<12/13\label{eq: thm upper SC 6}
\end{equation}
and thus 

\[
|\hL_{1}|\cdot\|\hL_{-1}\|_{2}^{-1}=\sqrt{\frac{\left(|\hL_{1}|\cdot\|\hL\|_{2}^{-1}\right)^{2}}{1-\left(|\hL_{1}|\cdot\|\hL\|_{2}^{-1}\right)^{2}}}\leq\sqrt{\frac{\left(\frac{12\sqrt{n+T}}{\btau}\right)^{2}}{1-\left(12/13\right)^{2}}}<32\sqrt{n+T}/\btau.
\]

Hence, $|Q|_{2}\lessp|\hL_{1}|\cdot\|\hL_{-1}\|_{2}^{-1}\lessp\btau^{-1}\sqrt{n+T}.$

\textbf{Step 2:} Bound $Q_{1}$

Let $\Delta_{L}=(\Delta_{L,1},\Delta_{L,2},...,\Delta_{L,n})'$ and
$\Delta_{L,-1}=(\Delta_{L,2},...,\Delta_{L,n})'\in\RR^{n-1}$. We
first notice that 
\begin{align}
Q_{1} & =\frac{\hL_{1}\hL_{-1}'L_{-1}f_{1}}{\|\hL_{-1}\|_{2}^{2}}-L_{1}f_{1}\nonumber \\
 & =\frac{(\Delta_{L,1}+L_{1}H)\hL_{-1}'L_{-1}f_{1}-L_{1}f_{1}\|\hL_{-1}\|_{2}^{2}}{\|\hL_{-1}\|_{2}^{2}}\nonumber \\
 & =\frac{\Delta_{L,1}\hL_{-1}'L_{-1}f_{1}}{\|\hL_{-1}\|_{2}^{2}}+\frac{H\hL_{-1}'L_{-1}-\|\hL_{-1}\|_{2}^{2}}{\|\hL_{-1}\|_{2}^{2}}\times L_{1}f_{1}.\label{eq: thm upper SC 8}
\end{align}

We start with the second term. Since $L_{-1}=(\hL_{-1}-\Delta_{L,-1})H^{-1}$,
we have that with probability approaching one, 
\begin{align}
\left|\frac{H\hL_{-1}'L_{-1}-\|\hL_{-1}\|_{2}^{2}}{\|\hL_{-1}\|_{2}^{2}}\times L_{1}f_{1}\right| & \leq\left|\frac{\hL_{-1}'(\hL_{-1}-\Delta_{L,-1})-\|\hL_{-1}\|_{2}^{2}}{\|\hL_{-1}\|_{2}^{2}}\right|\cdot\kappa\nonumber \\
 & =\kappa\|\hL\|_{2}^{-2}\left|\hL_{-1}'\Delta_{L,-1}\right|\nonumber \\
 & \leq\kappa\|\Delta_{L,-1}\|_{2}/\|\hL_{-1}\|_{2}\nonumber \\
 & \leq\kappa\|\Delta_{L}\|_{2}/\|\hL_{-1}\|_{2}\nonumber \\
 & \overset{\text{(i)}}{\lessp}\|\Delta_{L}\|_{2}/\|\hL\|_{2}\overset{\textii}{\lessp}\sqrt{1+n/T}/(\btau T^{-1/2})=\sqrt{n+T}/\btau,\label{eq: thm upper SC 8.5}
\end{align}
where (i) follows by $\|\hL_{-1}\|_{2}^{2}\cdot\|\hL\|_{2}^{-2}=1-(|\hL_{1}|\cdot\|\hL\|_{2}^{-1})^{2}\geq1-(12/13)^{2}$
due to (\ref{eq: thm upper SC 6}) and (ii) follows by $\|\hL\|_{2}=\|L\|_{2}=\btau/\sqrt{T}$
and the bound for $\|\Delta_{L}\|_{2}$ from Lemma \ref{lem: bound key 0}.

We notice that 
\[
|\hL_{-1}'L_{-1}f_{1}|\leq\|\hL\|_{2}\|L\|_{2}|f_{1}|=\btau^{2}T^{-1}|f_{1}|
\]
and 
\[
|\hL_{-1}'L_{-1}f_{1}|\leq\|\hL_{-1}\|_{2}\|L_{-1}f_{1}\|_{2}\leq\|\hL\|_{2}\|M_{-1,1}\|_{2}\lesssim\btau T^{-1/2}\sqrt{n}.
\]

Thus, 
\[
|\hL_{-1}'L_{-1}f_{1}|\lesssim\btau T^{-1/2}\min\left\{ \btau T^{-1/2}|f_{1}|,\sqrt{n}\right\} .
\]

Hence, we now obtain that 
\begin{align}
\frac{\left|\Delta_{L,1}\hL_{-1}'L_{-1}f_{1}\right|}{\|\hL_{-1}\|_{2}^{2}} & =\frac{\left|\Delta_{L,1}\hL_{-1}'L_{-1}f_{1}\right|}{\|\hL\|_{2}^{2}}\cdot\frac{\|\hL\|_{2}^{2}}{\|\hL_{-1}\|_{2}^{2}}\nonumber \\
 & \overset{\texti}{\lessp}\frac{\left|\Delta_{L,1}\hL_{-1}'L_{-1}f_{1}\right|}{\|\hL\|_{2}^{2}}\nonumber \\
 & \overset{\textii}{\lessp}\frac{T^{-1/2}+|L_{1}|\cdot\btau^{-2}T}{\btau^{2}T^{-1}}\times\btau T^{-1/2}\min\left\{ \btau T^{-1/2}|f_{1}|,\sqrt{n}\right\} \nonumber \\
 & \leq\frac{T^{-1/2}\cdot\sqrt{n}+|L_{1}|\cdot\btau^{-2}T\cdot\btau T^{-1/2}|f_{1}|}{\btau^{2}T^{-1}}\times\btau T^{-1/2}\nonumber \\
 & \overset{\textiii}{\lesssim}\sqrt{n}\btau^{-1}+T\btau^{-2},\label{eq: thm upper SC 10}
\end{align}
where (i) follows by $\|\hL_{-1}\|_{2}^{2}\cdot\|\hL\|_{2}^{-2}=1-(|\hL_{1}|\cdot\|\hL\|_{2}^{-1})^{2}\geq1-(12/13)^{2}$
due to (\ref{eq: thm upper SC 6}), (ii) follows by Lemma \ref{lem: bound key 2}
and $\|\hL\|_{2}=\|L\|_{2}=\btau/\sqrt{T}$ and (iii) follows by the
fact that $|L_{1}f_{1}|=|M_{1,1}|\leq\kappa$. Therefore, we combine
(\ref{eq: thm upper SC 8}) with (\ref{eq: thm upper SC 8.5}) and
(\ref{eq: thm upper SC 10}), obtaining
\[
|Q_{1}|\lessp\sqrt{n+T}\btau^{-1}+(\sqrt{T}\btau^{-1})^{2}\overset{\texti}{\lesssim}\sqrt{n+T}\btau^{-1},
\]
where (i) holds by $\btau\gtrsim\sqrt{n+T}$ (Lemma \ref{lem: bound key 0}).
The proof is complete by combining (\ref{eq: thm SC bnd eq 1}) with
the bounds for $Q_{1}$ and $Q_{2}$. 
\end{proof}

\subsection{Proof of Theorems \ref{thm: adaptivity SC 1}, \ref{thm: bad coverage}
and \ref{thm: adaptivitity SC main}}
\begin{proof}[\textbf{Proof of Theorem \ref{thm: adaptivity SC 1}}]
Theorem \ref{thm: adaptivity SC 1} follows by Theorem \ref{thm: adaptivitity SC main}
by choosing $\eta=1/2$. 
\end{proof}
\begin{proof}[\textbf{Proof of Theorem \ref{thm: bad coverage}}]
We invoke Theorem \ref{thm: adaptivitity SC main}. Let $c_{n,T}=\inf_{M\in\Mcal^{(2)}}\PP_{M}\left(M_{1,1}\in\Ical_{*}(X_{-1,-1})\right)$
and $\alpha_{n,T}=1-c_{n,T}$. Fix $\eta=1/2$. Then by Theorem \ref{thm: adaptivitity SC main}
(applied with $\alpha=\alpha_{n,T}$), it follows that 
\[
\sup_{M\in\Mcal^{(1)}}\EE_{M}|\Ical_{*}(X_{-1,-1})|\geq\inf_{M\in\Mcal_{*}^{(1)}}\EE_{M}|\Ical_{*}(X_{-1,-1})|\geq(1-2\alpha_{n,T})\min\{\kappa/2,\tau_{0}/2,\tau_{2}\}.
\]

By $\sup_{M\in\Mcal^{(1)}}\EE_{M}|\Ical_{*}(X_{-1,-1})|=o(1)$, we
have $(1-2\alpha_{n,T})\min\{\kappa\eta,\tau_{0}\eta,\tau_{2}\}\leq o(1)$.
Since $\tau_{0},\tau_{2}$ are bounded away from zero, $\min\{\kappa/2,\tau_{0}/2,\tau_{2}\}$
is bounded below by a positive constant. Hence, $1-2\alpha_{n,T}\leq o(1)$,
which means $\alpha_{n,T}\geq1/2-o(1)$. Since $c_{n,T}=1-\alpha_{n,T}$,
we have proved that $c_{n,T}\leq1/2+o(1)$. 
\end{proof}

\begin{proof}[\textbf{Proof of Theorem \ref{thm: adaptivitity SC main}}]
Fix an arbitrary $\bM\in\Mcal_{*}^{(1)}$. Define 
\[
\tM=\bM+\begin{pmatrix}c_{0} & 0\\
0 & 0
\end{pmatrix},
\]
where $c_{0}=\min\{\kappa\eta,\tau_{0}\eta,\tau_{2}\}$. Therefore,
\[
\sigma_{1}(\tM)=\|\tM\|\geq\|\bM\|-\|\bM-\tM\|\geq\tau_{0}(1+\eta)-c_{0}\geq\tau_{0}\geq\tau_{1}.
\]

By Fact 6(b) in Chapter 17.4 of \citet{hogben2006handbook}, we have
that $\sigma_{2}(\tM)\leq\sigma_{2}(\bM)+\sigma_{1}(\tM-\bM)$. This
means that $\sigma_{2}(\tM)\leq c_{0}\leq\tau_{2}$. Moreover, 
\[
\|\tM\|_{\infty}=\max\{\|\bM_{-1,-1}\|_{\infty},|\bM_{1,1}+c_{0}|\}\leq\max\{\kappa(1-\eta),\kappa(1-\eta)+c_{0}\}=\kappa(1-\eta)+c_{0}\leq\kappa.
\]

Therefore, $\tM\in\Mcal^{(2)}$. We notice that $\tM_{-1,-1}=\bM_{-1,1}$,
which means that $\PP_{\tM}$ and $\PP_{\bM}$ are identical (because
$\PP_{\tM}$ and $\PP_{\bM}$ are the distribution of the data $X_{-1,-1}$).
Fix an arbitrary $CI\dotbracket=[l\dotbracket,u\dotbracket]\in\Phi(\Mcal^{(2)})$.
By $\tM,\bM\in\Mcal^{(2)}$, we have that 
\[
\PP_{\tM}(\tM_{1,1}\in CI(X_{-1,-1}))\geq1-\alpha
\]
and 
\[
\PP_{\bM}(\bM_{1,1}\in CI(X_{-1,-1}))\geq1-\alpha.
\]

Since $\PP_{\tM}$ and $\PP_{\bM}$ are identical, it follows that
\[
\PP_{\bM}(\tM_{1,1}\in CI(X_{-1,-1}))\geq1-\alpha.
\]

Thus, 
\[
\PP_{\bM}\left(\left\{ \bM_{1,1}\in CI(X_{-1,-1})\right\} \bigcap\left\{ \tM_{1,1}\in CI(X_{-1,-1})\right\} \right)\geq1-2\alpha.
\]

We notice that 
\begin{align*}
 & \left\{ \bM_{1,1}\in CI(X_{-1,-1})\right\} \bigcap\left\{ \tM_{1,1}\in CI(X_{-1,-1})\right\} \\
 & =\left\{ l(X_{-1,-1})\leq\bM_{1,1}\leq u(X_{-1,-1})\right\} \bigcap\left\{ l(X_{-1,-1})\leq\bM_{1,1}+c_{0}\leq u(X_{-1,-1})\right\} \\
 & \subset\left\{ l(X_{-1,-1})\leq\bM_{1,1}\leq u(X_{-1,-1})-c_{0}\right\} \\
 & \subset\left\{ l(X_{-1,-1})\leq u(X_{-1,-1})-c_{0}\right\} .
\end{align*}

Therefore, the above two displays imply 
\[
\PP_{\bM}(|CI(X_{-1,-1})|\geq c_{0})=\PP_{\bM}(u(X_{-1,-1})-l(X_{-1,-1})\geq c_{0})\geq1-2\alpha
\]
and thus $\EE_{\bM}|CI(X_{-1,-1})|\geq c_{0}(1-2\alpha)$. Since the
choice of $\bM\in\Mcal_{*}^{(1)}$ is arbitrary, we have 
\[
\inf_{M\in\Mcal_{*}^{(1)}}\EE_{M}|CI(X_{-1,-1})|\geq c_{0}(1-2\alpha).
\]

Since the choice of $CI\in\Phi(\Mcal^{(2)})$ is arbitrary, the proof
is complete. 
\end{proof}

\subsection{Proof of Theorem \ref{thm: upper bnd panel}}

Recall that for a matrix $A\in\RR^{n\times r}$, $P_{A}=A(A'A)^{\dagger}A'$
and $\Pi_{A}=I_{n}-P_{A}$, where $^{\dagger}$ denotes the Moore-Penrose
pseudo-inverse. We shall heavily use the fact that $\|P_{A}\|_{*}=\rank(A)$;
to see this result, simply recall that the eigenvalues of $P_{A}$
are 1 (repeated $\rank(A)$ times) and 0 (repeated $n-\rank(A)$ times).
Similarly, we have $\trace P_{A}=\rank A$ and $\trace(\Pi_{A})=n-\rank A$. 

We shall also frequently use the following property called trace duality
(e.g., \citet{rohde2011estimation}): for $A,B\in\RR^{n_{1}\times n_{2}}$
\[
|\trace(A'B)|\leq\|A\|\cdot\|B\|_{*}.
\]

The proof is elementary. Let $B=\sum_{i=1}^{m}\mu_{i}v_{i}u_{i}'$
be a singular value decomposition. Then 
\[
|\trace(A'B)|=\left|\sum_{i=1}^{m}\mu_{i}u_{i}'A'v_{i}\right|\leq\sum_{i=1}^{m}|\mu_{i}|\cdot\max_{1\leq i\leq k}|u_{i}'A'v_{i}|\leq\sum_{i=1}^{m}|\mu_{i}|\cdot\|A\|=\|A\|\cdot\|B\|_{*}.
\]

Since $\trace(A'B)=\trace(BA')$, we also have $|\trace(A'B)|\leq\|B\|\cdot\|A\|_{*}$.
Therefore, we have 
\[
|\trace(A'B)|\leq\min\left\{ \|A\|\cdot\|B\|_{*},\|A\|_{*}\cdot\|B\|\right\} .
\]

We fix an arbitrary $\theta=(M,D,\sigma_{\varepsilon},\sigma_{u},\beta)\in\Theta$.
We shall write $\PP$ and $\EE$ instead of $\PP_{\theta}$ and $\EE_{\theta}$.
All the constants do not depend on $\theta$ and thus all the results
hold uniformly in $\theta$. Let $D=\alpha g'$ with $\alpha\in\RR^{n\times r_{*}}$
and $g\in\RR^{T\times r_{*}},$where $r_{*}=\rank(\alpha)=\rank(g)=\rank(D)\leq r_{1}$.
We impose the normalization $g'g/T=I_{r_{*}}$. Similarly, let $k_{*}=\rank(M+D\beta)$.
We can write $M+D\beta=\Lambda\Gamma'$ with $\Lambda\in\RR^{n\times k_{*}}$
and $\Gamma\in\RR^{T\times k_{*}}$, where $\Gamma'\Gamma/T=I_{k_{*}}$.
We recall $k:=r_{1}+r_{0}$. Hence, $k\geq k_{*}$. 

\subsubsection{Preliminary results}
\begin{lem}
\label{lem: bound 0 PCA}For any $\eta\in(0,1)$, there exists a constant
$C_{\eta}>0$ such that $\PP(\|ug\|_{F}>C_{\eta}\sqrt{nT})\leq\eta$.
Moreover, if $T\geq n$, then for any $\eta\in(0,1)$, there exists
a constant $C_{\eta}>0$ such that $\|uu'-T\sigma_{u}^{2}I_{n}\|\leq C_{\eta}\sqrt{nT}$
and $|\trace(uu'-T\sigma_{u}^{2}I_{n})|\leq C_{\eta}\sqrt{nT}$. 
\end{lem}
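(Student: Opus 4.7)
\medskip

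\noindent\textbf{Proof plan for Lemma \ref{lem: bound 0 PCA}.} All three bounds reduce to standard concentration facts for Gaussian matrices once one exploits the orthonormality $g'g = T I_{r_{*}}$ and the identity $u/\sigma_u \sim N(0,1)^{\otimes(n\times T)}$. I will handle the three parts separately.

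For the first bound, I note that since $g$ is deterministic (under $\PP_\theta$) and $g'g = T I_{r_*}$, the entries of $ug\in\RR^{n\times r_*}$ are jointly Gaussian with covariance
\[
\text{Cov}\bigl((ug)_{i,j},(ug)_{i',j'}\bigr) = \sigma_u^{2}\,\delta_{i,i'}\sum_{t=1}^{T} g_{t,j}g_{t,j'} = \sigma_u^{2}\,T\,\delta_{i,i'}\delta_{j,j'},
\]
so $ug$ has i.i.d.\ $N(0,T\sigma_u^2)$ entries. Hence $\|ug\|_F^2 / (T\sigma_u^2)$ is a $\chi^2$ variable with $nr_*$ degrees of freedom and mean $nr_*\leq n r_1$. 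Markov's inequality then gives $\PP(\|ug\|_F > C_\eta\sqrt{nT}) \leq \sigma_u^2 r_1/C_\eta^2 \leq \kappa^2 r_1/C_\eta^2$, which is $\leq \eta$ for $C_\eta$ large.

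For the spectral-norm bound, I invoke Corollary 5.35 of \citet{vershynin2010introduction} applied to $u/\sigma_u$: with probability at least $1-2e^{-s^2/2}$, the singular values $\sigma_i(u/\sigma_u)$ lie in $[\sqrt{T}-\sqrt{n}-s,\ \sqrt{T}+\sqrt{n}+s]$. Squaring, the eigenvalues of $uu'/\sigma_u^2$ lie in the interval $[(\sqrt{T}-\sqrt{n}-s)^2,(\sqrt{T}+\sqrt{n}+s)^2]$, so each eigenvalue of $uu'/\sigma_u^{2}-T I_n$ has absolute value at most
\[
\max\bigl\{(\sqrt{T}+\sqrt{n}+s)^2-T,\ T-(\sqrt{T}-\sqrt{n}-s)^2\bigr\}\leq 2\sqrt{T}(\sqrt{n}+s)+(\sqrt{n}+s)^2.
\]
Taking $s=\sqrt{n}$ and using $T\geq n$ yields $\|uu'-T\sigma_u^{2}I_n\|\leq C\sqrt{nT}$ on an event of probability $\geq 1-2e^{-n/2}$; choosing a slightly larger constant absorbs the failure probability into $\eta$.

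For the trace bound, I simply observe $\trace(uu')=\sum_{i,t}u_{i,t}^{2}$, so $\sigma_u^{-2}\trace(uu')\sim\chi^{2}_{nT}$. Lemma 1 of \citet{laurent2000adaptive} yields $\PP(|\chi^{2}_{nT}-nT|>2\sqrt{nT\,x}+2x)\leq 2e^{-x}$; taking $x$ a large constant multiple of $1$ (or of $\log(1/\eta)$) gives $|\trace(uu')-nT\sigma_u^{2}|\leq C_\eta\sqrt{nT}$ with probability at least $1-\eta$, and the claim follows because $\trace(T\sigma_u^{2}I_n)=nT\sigma_u^{2}$. None of the steps is truly an obstacle; the only subtlety is that the spectral-norm bound requires $T\geq n$ so that the linearization $(\sqrt{T}+\sqrt{n})^{2}-T\asymp\sqrt{nT}$ is valid, which is exactly the hypothesis imposed in the lemma.
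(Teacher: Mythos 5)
Your proof is correct and follows the same basic template as the paper's (compute a second moment and apply Markov for the Frobenius bound; matrix concentration for the spectral norm; scalar concentration for the trace), but you reach for slightly different off-the-shelf tools. For the Frobenius bound the paper simply computes $\EE\|ug\|_F^2$ and invokes Markov; your identification of the exact $\chi^2_{nr_*}$ law is extra information that isn't needed but certainly doesn't hurt. For the spectral-norm bound the paper cites Proposition 2.1 of \citet{vershynin2012close} (the sample-covariance concentration result), whereas you derive the same inequality from the singular-value bound of Corollary 5.35 of \citet{vershynin2010introduction} by squaring; these are essentially two faces of the same Gaussian random-matrix fact, and your derivation is arguably more self-contained since Corollary 5.35 is already used elsewhere in the paper. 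For the trace bound the paper appeals to the central limit theorem applied to $\sum_{i,t}(u_{i,t}^2-\sigma_u^2)$, while you use the non-asymptotic Laurent--Massart $\chi^2$ tail bound; your version gives a clean finite-sample constant and avoids the implicit asymptotics in the CLT citation, which is a mild improvement in rigor. One small point worth tidying: your linearization $T-(\sqrt{T}-\sqrt{n}-s)^2\leq 2\sqrt{T}(\sqrt{n}+s)$ implicitly assumes $\sqrt{T}\geq\sqrt{n}+s$; in the complementary case one should use the trivial lower bound $\sigma_{\min}(u)\geq 0$, and then the eigenvalue deviation is at most $T$, which is still $O_\eta(\sqrt{nT})$ because $T\geq n$ and $\sqrt{T}<\sqrt{n}+s$ force $T$ to be bounded by a constant depending only on $\eta$. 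With that caveat noted, the argument is complete.
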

\begin{proof}[\textbf{Proof of Lemma \ref{lem: bound 0 PCA}}]
Notice that 
\[
\EE\|ug\|_{F}^{2}=\EE\trace(g'u'ug)=\trace(g'\EE(u'u)g)=n\cdot\trace(g'g)=n\|g\|_{F}^{2}=nT.
\]

The first claim follows by Markov's inequality. The second claim follows
by Proposition 2.1 of \citet{vershynin2012close} and the central
limit theorem ($\trace(uu'-T\sigma_{u}^{2}I_{n})=\sum_{i=1}^{n}\sum_{t=1}^{T}(u_{i,t}^{2}-\sigma_{u}^{2})$). 
\end{proof}
\begin{lem}
\label{lem: bound 1 PCA}Let $\halpha\in\arg\min_{a\in\RR^{n\times r_{1}}}\trace(X'\Pi_{a}X)$.
Assume that $T\geq n$. For any $\eta>0$, there exists a constant
$C_{\eta}>0$ such that with probability at least $1-\eta$, $|\trace(X'\Pi_{\halpha}X)-T(n-r_{1})\sigma_{u}^{2}|\leq C_{\eta}r_{1}\sqrt{nT}$
and $\|\Pi_{\halpha}\alpha g'\|_{F}^{2}\leq C_{\eta}r_{1}\sqrt{nT}$. 
\end{lem}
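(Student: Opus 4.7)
The plan is to exploit the optimality of $\halpha$ as a PCA solution and produce a self-consistent quadratic inequality for $x := \|\Pi_{\halpha}\alpha g'\|_F$ that delivers both claims simultaneously. First, I would construct an oracle competitor $a_0 \in \RR^{n \times r_1}$ whose column span contains that of $\alpha$ (padding with arbitrary extra columns if $r_* < r_1$), so that $\Pi_{a_0}\alpha g' = 0$ and $\Pi_{a_0}X = \Pi_{a_0}u$. Optimality of $\halpha$ then gives
\[
\trace(X'\Pi_{\halpha}X) \leq \trace(X'\Pi_{a_0}X) = \trace(\Pi_{a_0}uu').
\]
Decomposing $\Pi_{a_0} = I_n - P_{a_0}$ and applying trace duality with $\|P_{a_0}\|_* = r_1$, Lemma \ref{lem: bound 0 PCA} (both the $\|uu' - T\sigma_u^2 I_n\|$ and $|\trace(uu' - T\sigma_u^2 I_n)|$ bounds) upper-bounds the right-hand side by $T\sigma_u^2(n - r_1) + C_\eta r_1\sqrt{nT}$ with probability at least $1 - \eta$.

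Next, I would match this with the expansion obtained from $X = \alpha g' + u$ and $\Pi_{\halpha}^2 = \Pi_{\halpha}$:
\[
\trace(X'\Pi_{\halpha}X) = x^2 + 2\trace(g\alpha'\Pi_{\halpha}u) + \trace(\Pi_{\halpha}uu').
\]
The third term is controlled by replaying the $I_n = \Pi_{\halpha} + P_{\halpha}$ decomposition with $\|P_{\halpha}\|_* = r_1$, giving $|\trace(\Pi_{\halpha}uu') - T\sigma_u^2(n - r_1)| \lessp r_1\sqrt{nT}$. For the cross term, the key identity $x = \sqrt{T}\|\Pi_{\halpha}\alpha\|_F$ (from $g'g = TI_{r_*}$) combined with cyclic rotation, Cauchy--Schwarz, and Lemma \ref{lem: bound 0 PCA}'s bound $\|ug\|_F \lessp \sqrt{nT}$ yields
\[
|\trace(g\alpha'\Pi_{\halpha}u)| = |\trace((\Pi_{\halpha}\alpha)'(ug))| \leq \|\Pi_{\halpha}\alpha\|_F\,\|ug\|_F \lessp \sqrt{n}\,x.
\]

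Subtracting the expansion from the optimality bound then yields the quadratic inequality $x^2 \lessp r_1\sqrt{nT} + \sqrt{n}\,x$, so $x^2 \lessp r_1\sqrt{nT}$ after noting that $n \leq \sqrt{nT}$ under $T \geq n$; this is the second claim. Feeding $x^2 \lessp r_1\sqrt{nT}$ and the cross-term bound $|\trace(g\alpha'\Pi_{\halpha}u)| \lessp \sqrt{n}\cdot\sqrt{r_1\sqrt{nT}} = \sqrt{r_1}\,n^{3/4}T^{1/4}$ back into the expansion, and using $n \leq T$ once more to absorb $\sqrt{r_1}\,n^{3/4}T^{1/4} \leq r_1\sqrt{nT}$, yields the first claim. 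The main obstacle is precisely the self-referential nature of the estimates: bounding the trace requires controlling the cross term, which itself depends on the quantity $x$ we are trying to estimate. The hypothesis $T \geq n$ is what makes the resulting quadratic inequality deliver the target rate $r_1\sqrt{nT}$; without it the linear-in-$x$ remainder would no longer be dominated by the constant term.
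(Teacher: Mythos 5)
Your proposal is correct and uses essentially the same ingredients as the paper's proof: the oracle competitor with rank-$r_1$ column span containing that of $\alpha$, trace duality with $\|P_{\halpha}\|_* = r_1$, the bounds from Lemma \ref{lem: bound 0 PCA}, and a quadratic inequality in $\|\Pi_{\halpha}\alpha\|_F$. The only difference is organizational: the paper first establishes the two-sided trace bound directly (lower-bounding the combined cross/quadratic term by $\min_x(Tx^2 - 2C\sqrt{nT}x) = -C^2n$, which avoids any circularity) and derives the $\|\Pi_{\halpha}\alpha g'\|_F^2$ bound afterward as a consequence, whereas you solve the quadratic for $x$ first and then substitute back to get the trace bound — an equally valid ordering.
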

\begin{proof}[\textbf{Proof of Lemma \ref{lem: bound 1 PCA}}]
We fix an arbitrary $\eta\in(0,1)$. By Lemma \ref{lem: bound 0 PCA},
there exists a constant $C_{1}>0$ such that $\PP(\Acal)\geq1-\eta$,
where 
\[
\Acal=\left\{ \|ug\|_{F}\leq C\sqrt{nT}\right\} \bigcap\left\{ \|uu'-T\sigma_{u}^{2}I_{n}\|\leq C\sqrt{nT}\right\} \bigcap\left\{ |\trace(uu'-T\sigma_{u}^{2}I_{n})|\leq C\sqrt{nT}\right\} .
\]

The rest of the proof proceeds in three steps.

\textbf{Step 1:} show that on the event $\Acal$, $\trace(X'\Pi_{\halpha}X)\leq(n-r_{1})T\sigma_{u}^{2}+C\sqrt{nT}(1+r_{1})$.

Recall $r_{*}=\rank(\alpha)$. We define $\balpha$ as follows. If
$r_{*}=r_{1}$, then $\balpha=\alpha$. If $r_{*}<r_{1}$, then we
define $\balpha=(\alpha,\varphi)\in\RR^{n\times r_{1}}$, where $\varphi\in\RR^{n\times(r_{1}-r_{*})}$
satisfies $\varphi'\alpha=0$ and $\rank(\balpha)=r_{1}$. With probability
one, $\rank(\halpha)=r_{1}$. Hence, by construction, the following
holds with probability one
\begin{equation}
\trace(X'\Pi_{\halpha}X)\leq\trace(X'\Pi_{\balpha}X).\label{eq: bound 1PCA eq 3}
\end{equation}

We note that $\Pi_{\balpha}\alpha=0$ and  $\trace(\Pi_{\balpha})=n-r_{1}$.
Therefore, on the event $\Acal$, we have
\begin{align}
\trace(X'\Pi_{\balpha}X) & =\trace(u'\Pi_{\balpha}u)\nonumber \\
 & =\trace(\Pi_{\balpha}uu')\nonumber \\
 & =\trace(\Pi_{\balpha}T\sigma_{u}^{2}I_{n})+\trace(\Pi_{\balpha}(uu'-T\sigma_{u}^{2}I_{n}))\nonumber \\
 & =(n-r_{1})T\sigma_{u}^{2}+\trace(\Pi_{\balpha}(uu'-T\sigma_{u}^{2}I_{n}))\nonumber \\
 & =(n-r_{1})T\sigma_{u}^{2}+\trace(uu'-T\sigma_{u}^{2}I_{n})-\trace(P_{\balpha}(uu'-T\sigma_{u}^{2}I_{n}))\nonumber \\
 & \leq(n-r_{1})T\sigma_{u}^{2}+C\sqrt{nT}-\trace(P_{\balpha}(uu'-T\sigma_{u}^{2}I_{n}))\nonumber \\
 & \overset{\text{(i)}}{\leq}(n-r_{1})T\sigma_{u}^{2}+C\sqrt{nT}+\|P_{\balpha}\|_{*}\cdot\|uu'-T\sigma_{u}^{2}I_{n}\|\nonumber \\
 & \overset{\text{(ii)}}{=}(n-r_{1})T\sigma_{u}^{2}+C\sqrt{nT}(1+r_{1}),\label{eq: bound 1 PCA eq 4}
\end{align}
where (i) follows by trace duality and (ii) follows by $\|P_{\balpha}\|_{*}=r_{1}$.
Combining the above displays, we obtain that on the event $\Acal$,
\begin{equation}
\trace(X'\Pi_{\halpha}X)\leq(n-r_{1})T\sigma_{u}^{2}+C\sqrt{nT}(1+r_{1}).\label{eq: bound 1 PCA eq 4.1}
\end{equation}

\textbf{Step 2:} prove the first claim.

Recall the normalization $g'g/T=I_{r_{*}}$. We have that on the event
$\Acal$, 
\begin{align}
 & \trace(g\alpha'\Pi_{\halpha}\alpha g')+2\trace(u'\Pi_{\halpha}\alpha g')\nonumber \\
 & =\trace(\alpha'\Pi_{\halpha}\alpha g'g)+2\trace(\Pi_{\halpha}\alpha g'u')\nonumber \\
 & =T\cdot\trace(\alpha'\Pi_{\halpha}\alpha)+2\trace(\Pi_{\halpha}\alpha g'u')\nonumber \\
 & \geq T\cdot\trace(\alpha'\Pi_{\halpha}\alpha)-2\|\Pi_{\halpha}\alpha\|_{F}\cdot\|ug\|_{F}\nonumber \\
 & \geq T\cdot\trace(\alpha'\Pi_{\halpha}\alpha)-2\|\Pi_{\halpha}\alpha\|_{F}\cdot C\sqrt{nT}\nonumber \\
 & =T\cdot\|\Pi_{\halpha}\alpha\|_{F}^{2}-2\|\Pi_{\halpha}\alpha\|_{F}\cdot C\sqrt{nT}\nonumber \\
 & \geq\min_{x\in\RR}\left(Tx^{2}-2C\sqrt{nT}x\right)\overset{\texti}{\geq}-C^{2}n,\label{eq: bound 1 PCA 5}
\end{align}
where (i) follows by the elementary result $\min_{x\in\RR}(ax^{2}-bx)=-b^{2}/(4a)$
for $a>0$. We now observe that on the event $\Acal$, 
\begin{align}
 & \trace(X'\Pi_{\halpha}X)\nonumber \\
 & =\trace(g\alpha'\Pi_{\halpha}\alpha g')+\trace(u'\Pi_{\halpha}u)+2\trace(u'\Pi_{\halpha}\alpha g')\nonumber \\
 & =\trace(\Pi_{\halpha}uu')+\trace(g\alpha'\Pi_{\halpha}\alpha g')+2\trace(u'\Pi_{\halpha}\alpha g')\nonumber \\
 & \overset{\texti}{\geq}\trace(\Pi_{\halpha}uu')-C^{2}n\nonumber \\
 & =\trace(\Pi_{\halpha}T\sigma_{u}^{2}I_{n})+\trace(\Pi_{\halpha}(uu'-T\sigma_{u}^{2}I_{n}))-C^{2}n\nonumber \\
 & =\trace(\Pi_{\halpha}T\sigma_{u}^{2}I_{n})+\trace(uu'-T\sigma_{u}^{2}I_{n})-\trace(P_{\halpha}(uu'-\sigma_{u}^{2}I_{n}))-C^{2}n\nonumber \\
 & \overset{\textii}{\geq}(n-r_{1})T\sigma_{u}^{2}-C\sqrt{nT}-\|P_{\halpha}\|_{*}\cdot\|uu'-\sigma_{u}^{2}I_{n}\|-C^{2}n\nonumber \\
 & \overset{\textiii}{=}(n-r_{1})T\sigma_{u}^{2}-C\sqrt{nT}-r_{1}\cdot C\sqrt{nT}-C^{2}n\nonumber \\
 & \overset{\textiv}{\geq}(n-r_{1})T\sigma_{u}^{2}-(1+r_{1}+C)C\sqrt{nT}\label{eq: bound 1 PCA 6}
\end{align}
where (i) follows by (\ref{eq: bound 1 PCA 5}), (ii) follows by trace
duality, (iii) follows by $\|P_{\halpha}\|_{*}=r_{1}$ with probability
one and (iv) holds by $T\geq n$. Therefore, (\ref{eq: bound 1 PCA eq 4.1})
and (\ref{eq: bound 1 PCA 6}) imply that on the event $\Acal$,
\begin{equation}
\left|\trace(X'\Pi_{\halpha}X)-(n-r_{1})T\sigma_{u}^{2}\right|\leq(1+r_{1}+C)C\sqrt{nT}.\label{eq: bound 1 PCA 7}
\end{equation}

The first claim follows. 

\textbf{Step 3:} prove the second claim.

In (\ref{eq: bound 1 PCA 6}), we have proved that on the event $\Acal,$
\begin{align*}
\trace(\Pi_{\halpha}uu') & \geq\trace(\Pi_{\halpha}T\sigma_{u}^{2}I_{n})+\trace(uu'-T\sigma_{u}^{2}I_{n})-\trace(P_{\halpha}(uu'-\sigma_{u}^{2}I_{n}))\\
 & \geq(n-r_{1})T\sigma_{u}^{2}-(1+r_{1})C\sqrt{nT}.
\end{align*}

By simple algebra, we have
\[
\trace(\Pi_{\halpha}uu')=\trace(X'\Pi_{\halpha}X)-\left(\trace(g\alpha'\Pi_{\halpha}\alpha g')+2\trace(u'\Pi_{\halpha}\alpha g')\right)
\]

The above two displays imply that on the event $\Acal,$
\begin{multline*}
\trace(g\alpha'\Pi_{\halpha}\alpha g')+2\trace(u'\Pi_{\halpha}\alpha g')\\
\leq\trace(X'\Pi_{\halpha}X)-(n-r_{1})T\sigma_{u}^{2}+(1+r_{1})C\sqrt{nT}\overset{\texti}{\leq}2C\sqrt{nT}(1+r_{1}),
\end{multline*}
where (i) follows by (\ref{eq: bound 1 PCA eq 4.1}). In (\ref{eq: bound 1 PCA 5}),
we have proved that on $\Acal$, 
\begin{multline*}
\trace(g\alpha'\Pi_{\halpha}\alpha g')+2\trace(u'\Pi_{\halpha}\alpha g')\\
=T\|\Pi_{\halpha}\alpha\|_{F}^{2}+2\trace(u'\Pi_{\halpha}\alpha g')\geq T\|\Pi_{\halpha}\alpha\|_{F}^{2}-2\|\Pi_{\halpha}\alpha\|_{F}\cdot C\sqrt{nT}.
\end{multline*}

The above two displays imply that on the event $\Acal$, 
\[
T\|\Pi_{\halpha}\alpha\|_{F}^{2}-2C\sqrt{nT}\|\Pi_{\halpha}\alpha\|_{F}-2C\sqrt{nT}(1+r_{1})\leq0.
\]

This is a quadratic inequality in $\|\Pi_{\halpha}\alpha\|_{F}$.
Thus, $\|\Pi_{\halpha}\alpha\|_{F}$ is smaller than the larger root
of the corresponding quadratic equation. In other words, 
\[
\|\Pi_{\halpha}\alpha\|_{F}\leq\frac{2C\sqrt{nT}+\sqrt{4C^{2}nT+8TC\sqrt{nT}(1+r_{1})}}{2T}.
\]

Hence, there exists a constant $C_{1}>0$ depending only on $C$ such
that on the event $\Acal$, 
\[
\|\Pi_{\halpha}\alpha\|_{F}\leq C_{1}\left(n^{1/2}T^{-1/2}+n^{1/4}T^{-1/4}(1+r_{1})^{1/2}\right)\overset{\texti}{\leq}2C_{1}(1+r_{1})^{1/2}n^{1/4}T^{-1/4},
\]
where (i) holds by $T\geq n$. Recall the normalization of $g'g/T=I_{r_{*}}$.
We have 
\[
\|\Pi_{\halpha}\alpha g'\|_{F}^{2}=\trace(g\alpha'\Pi_{\halpha}\alpha g')=\trace(\alpha'\Pi_{\halpha}\alpha g'g)=T\|\Pi_{\halpha}\alpha\|_{F}^{2}.
\]

The second claim follows. The proof is complete.
\end{proof}
\begin{lem}
\label{lem: bound 2 PCA}Let $\hLambda\in\arg\min_{A\in\RR^{n\times k}}\trace(Y'\Pi_{A}Y)$.
Assume that $T\geq n$. For any $\eta>0$, there exists a constant
$C_{\eta}>0$ such that with probability at least $1-\eta$, $|\trace(Y'\Pi_{\hLambda}Y)-T(n-k)\sigma_{V}^{2}|\leq C_{\eta}k\sqrt{nT}$
and $\|\Pi_{\hLambda}\Lambda\Gamma'\|_{F}^{2}\leq C_{\eta}k\sqrt{nT}$. 
\end{lem}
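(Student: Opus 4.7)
The plan is to mirror the three-step argument used for Lemma \ref{lem: bound 1 PCA}, with the substitutions $u \leftrightarrow V$, $\alpha \leftrightarrow \Lambda$, $g \leftrightarrow \Gamma$, $r_1 \leftrightarrow k$, $r_* \leftrightarrow k_*$, $\sigma_u^2 \leftrightarrow \sigma_V^2$, and $\halpha \leftrightarrow \hLambda$. The key observation is that $Y = (M+D\beta) + V = \Lambda\Gamma' + V$ where $V = u\beta + \varepsilon$ is i.i.d.\ $N(0,\sigma_V^2)$ with $\sigma_V^2 = \sigma_u^2\beta^2 + \sigma_\varepsilon^2$, and $\sigma_V^2$ is uniformly bounded on $\Theta$ because $|\beta|,\sigma_\varepsilon,\sigma_u\leq\kappa$. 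Hence the preliminary concentration results of Lemma \ref{lem: bound 0 PCA} apply verbatim to $V$ in place of $u$ and $\Gamma$ in place of $g$: there is a high-probability event $\Acal$ on which $\|V\Gamma\|_F \leq C\sqrt{nT}$, $\|VV' - T\sigma_V^2 I_n\| \leq C\sqrt{nT}$, and $|\trace(VV' - T\sigma_V^2 I_n)| \leq C\sqrt{nT}$.

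For the upper bound step, I would construct $\bLambda\in\RR^{n\times k}$ whose column space contains that of $\Lambda$ (padding with orthogonal columns if $k_*<k$) so that $\Pi_{\bLambda}\Lambda = 0$ and $\rank(\bLambda)=k$. Optimality of $\hLambda$ gives $\trace(Y'\Pi_{\hLambda}Y) \leq \trace(Y'\Pi_{\bLambda}Y) = \trace(V'\Pi_{\bLambda}V)$, and expanding the latter as $\trace(\Pi_{\bLambda}T\sigma_V^2 I_n) + \trace(VV' - T\sigma_V^2 I_n) - \trace(P_{\bLambda}(VV' - T\sigma_V^2 I_n))$ together with trace duality and $\|P_{\bLambda}\|_* = k$ produces the upper bound $(n-k)T\sigma_V^2 + C\sqrt{nT}(1+k)$ on $\Acal$.

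For the lower bound, decompose $\trace(Y'\Pi_{\hLambda}Y) = T\|\Pi_{\hLambda}\Lambda\|_F^2 + 2\trace(V'\Pi_{\hLambda}\Lambda\Gamma') + \trace(V'\Pi_{\hLambda}V)$, using the normalization $\Gamma'\Gamma/T = I_{k_*}$. The cross term is bounded via Cauchy--Schwarz and the event $\Acal$: $|2\trace(V'\Pi_{\hLambda}\Lambda\Gamma')| = |2\trace(\Pi_{\hLambda}\Lambda\Gamma'V')| \leq 2\|\Pi_{\hLambda}\Lambda\|_F \cdot \|V\Gamma\|_F \leq 2C\sqrt{nT}\|\Pi_{\hLambda}\Lambda\|_F$, so completing the square in $x = \|\Pi_{\hLambda}\Lambda\|_F$ yields the same $-C^2 n$ floor as in the previous lemma. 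Combining this with a trace-duality estimate $\trace(P_{\hLambda}(VV' - T\sigma_V^2 I_n)) \leq k\cdot C\sqrt{nT}$ produces the matching lower bound $(n-k)T\sigma_V^2 - (1+k+C)C\sqrt{nT}$ on $\Acal$, proving the first claim (since $T\geq n$ absorbs the $n$ terms into $\sqrt{nT}$).

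For the second claim, chain the two inequalities: upper-bounding $\trace(Y'\Pi_{\hLambda}Y)$ by $(n-k)T\sigma_V^2 + C\sqrt{nT}(1+k)$ and lower-bounding $\trace(V'\Pi_{\hLambda}V) \geq (n-k)T\sigma_V^2 - (1+k)C\sqrt{nT}$ gives
\[
T\|\Pi_{\hLambda}\Lambda\|_F^2 - 2C\sqrt{nT}\,\|\Pi_{\hLambda}\Lambda\|_F - 2C\sqrt{nT}(1+k) \leq 0,
\]
a quadratic inequality in $\|\Pi_{\hLambda}\Lambda\|_F$ whose larger root is $O((1+k)^{1/2} n^{1/4} T^{-1/4})$ under $T\geq n$. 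Finally, the identity $\|\Pi_{\hLambda}\Lambda\Gamma'\|_F^2 = \trace(\Gamma\Lambda'\Pi_{\hLambda}\Lambda\Gamma') = T\|\Pi_{\hLambda}\Lambda\|_F^2$ (from $\Gamma'\Gamma/T = I_{k_*}$) converts this into $\|\Pi_{\hLambda}\Lambda\Gamma'\|_F^2 \lesssim k\sqrt{nT}$. Because the architecture is a straightforward transcription of Lemma \ref{lem: bound 1 PCA}, no new obstacle arises; the only point requiring care is checking that the Gaussian concentration inputs of Lemma \ref{lem: bound 0 PCA} remain uniform in $\theta\in\Theta$ when $u$ is replaced by $V$, which follows from the uniform boundedness of $\sigma_V^2$.
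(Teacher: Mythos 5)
Your proposal is correct and matches the paper's approach: the paper simply says the proof is analogous to Lemma \ref{lem: bound 1 PCA}, and you have correctly carried out the substitution $u \to V$, $\alpha \to \Lambda$, $g \to \Gamma$, $r_1 \to k$, $\sigma_u \to \sigma_V$, including the essential check that $V = u\beta + \varepsilon$ is i.i.d.\ Gaussian with variance $\sigma_V^2 = \sigma_u^2\beta^2 + \sigma_\varepsilon^2$ uniformly bounded on $\Theta$, so the concentration inputs of Lemma \ref{lem: bound 0 PCA} apply verbatim.
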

\begin{proof}[\textbf{Proof of Lemma \ref{lem: bound 2 PCA}}]
The proof is analogous to that of Lemma \ref{lem: bound 1 PCA}. 
\end{proof}
\begin{lem}
\label{lem: rand proj spectral norm}Assume that $T\geq n$. We have
the following:
\[
\PP\left(\|u\varepsilon'\|\leq19\sigma_{\varepsilon}\sigma_{u}\sqrt{nT}\right)\rightarrow1.
\]
\end{lem}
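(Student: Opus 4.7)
The plan is to decouple $u$ and $\varepsilon$ by passing $\varepsilon$ through its singular value decomposition and exploiting the rotational invariance of the Gaussian distribution. Since $T \geq n$, write $\varepsilon = USV'$ with $U \in \RR^{n \times n}$ orthogonal, $S \in \RR^{n \times n}$ diagonal, and $V \in \RR^{T \times n}$ having orthonormal columns. Then
\[
u\varepsilon' = uVSU', \qquad \text{so} \qquad \|u\varepsilon'\| = \|uVS\| \leq \|uV\| \cdot \|S\| = \|uV\| \cdot \|\varepsilon\|,
\]
because $U'$ is an isometry. The whole point is that the SVD has pulled $\varepsilon$ out as a scalar factor $\|\varepsilon\|$, leaving $uV$, which is an $n \times n$ object we can control.

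The first substantive step is to check that $uV$ is again an $n \times n$ matrix with i.i.d.\ $N(0, \sigma_u^2)$ entries, and that it is independent of $\varepsilon$. For any fixed matrix $V_0$ with orthonormal columns, $uV_0$ has i.i.d.\ $N(0,\sigma_u^2)$ entries (each row of $u$ is rotationally invariant and $V_0$ has orthonormal columns). Conditioning on $\varepsilon$ fixes $V$, and since $u$ is independent of $\varepsilon$, this conditional distribution is the same for every realization of $\varepsilon$. Hence $uV$ has i.i.d.\ $N(0,\sigma_u^2)$ entries marginally and is independent of $\varepsilon$.

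Now I would apply Corollary 5.35 of \citet{vershynin2010introduction} twice. Applied to the $n \times T$ matrix $\varepsilon/\sigma_\varepsilon$, it gives $\|\varepsilon\| \leq \sigma_\varepsilon(\sqrt{T} + \sqrt{n} + t)$ with probability at least $1 - 2\exp(-t^2/2)$. Taking $t = \sqrt{n}$ and using $T \geq n$, we get $\|\varepsilon\| \leq \sigma_\varepsilon(\sqrt{T} + 2\sqrt{n}) \leq 3\sigma_\varepsilon \sqrt{T}$ with probability $1 - 2\exp(-n/2)$. Applied to the $n \times n$ matrix $uV/\sigma_u$ with $t = \sqrt{n}$, it gives $\|uV\| \leq 3\sigma_u \sqrt{n}$ with the same probability. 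Combining by a union bound,
\[
\|u\varepsilon'\| \leq \|uV\| \cdot \|\varepsilon\| \leq 3\sigma_u\sqrt{n} \cdot 3\sigma_\varepsilon\sqrt{T} = 9\sigma_\varepsilon\sigma_u\sqrt{nT} \leq 19\sigma_\varepsilon\sigma_u\sqrt{nT}
\]
with probability tending to one, which is the claim.

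There is essentially no difficulty here; this is a short concentration argument. The only conceptual point worth stressing is the decoupling: a naive bound $\|u\varepsilon'\| \leq \|u\| \cdot \|\varepsilon\|$ would yield a rate of $\sqrt{(n+T)^2} = n+T$, which is worse than $\sqrt{nT}$ when $T \gg n$. The SVD trick replaces the $T \times T$ factor $\|\varepsilon\|^2$-type bound with an $n \times n$ Gaussian factor $\|uV\|$, recovering the correct $\sqrt{n}$ rather than $\sqrt{T}$ in one of the two factors. The assumption $T \geq n$ is exactly what makes $V$ land in $\RR^{T\times n}$ with orthonormal columns (as opposed to $V \in \RR^{T \times T}$), so that $uV$ is of size $n \times n$ instead of $n \times T$.
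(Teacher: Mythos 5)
Your proof is correct, and it takes a genuinely different route from the paper's. The paper works directly with the bilinear form $\trace(A'\varepsilon u')$ over the set of rank-one, Frobenius-normalized $n\times n$ matrices: it constructs a $1/3$-net of that set (Lemma 3.1 of Cand\`es and Plan), bounds the number of net points by $27^{2n+1}$, and for each fixed $a_j b_j'$ in the net derives an exponential tail for $a_j'\varepsilon u' b_j$ by conditioning on $\varepsilon$, computing the Gaussian moment generating function, and then integrating against the $\chi^2_T$ law of $\|\varepsilon' a_j\|_2^2$. You instead decouple via the thin SVD $\varepsilon = USV'$, observe that $\|u\varepsilon'\| \le \|uV\|\,\|\varepsilon\|$, and use Gaussian rotational invariance to reduce $uV$ to an independent $n\times n$ Gaussian matrix, after which Vershynin's Corollary 5.35 applied twice finishes things. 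Your argument is shorter, avoids both the net construction and the MGF computation, and yields the cleaner constant $9$ rather than $19$; it exploits the Gaussian rotational invariance more heavily, whereas the paper's net-plus-MGF route is closer to what one would do for more general sub-Gaussian noise. One small technical point worth a sentence in a polished write-up: you should note that a measurable selection of the SVD map $\varepsilon\mapsto V$ exists (the singular values are a.s.\ distinct since $\varepsilon$ has a continuous law, and the sign ambiguity in the columns does not affect $\|uV\|$), so that the conditioning argument, and hence the claimed independence of $uV$ from $\varepsilon$, is legitimate. Alternatively you can skip independence entirely: you only ever use the two marginal tail bounds and a union bound, and the tail bound for $\|uV\|$ follows immediately by conditioning on $\varepsilon$ without invoking joint independence.
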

\begin{proof}[\textbf{Proof of Lemma \ref{lem: rand proj spectral norm}}]
Without loss of generality, we only need to prove the result assuming
$\sigma_{u}=\sigma_{\varepsilon}=1$. Let $\Gcal=\{A\in\RR^{n\times n}:\ \rank A\leq1,\ \|A\|_{F}=1\}$.
Notice that $\|u\varepsilon'\|=\|\varepsilon u'\|=Z$, where 
\[
Z=\sup_{A\in\Gcal}|\trace(A'\varepsilon u')|.
\]

The rest of the proof proceeds in two steps. The first step uses a
covering argument and the second step establishes an exponential bound. 

\textbf{Step 1:} construct a covering argument

Let $\eta=1/3$. By Lemma 3.1 of \citet{candes2011tight}, there exists
an $\eta$-net $\{A_{1},...,A_{K}\}\subset\Gcal$ such that $\sup_{A\in\Gcal}\min_{1\leq j\leq K}\|A-A_{j}\|_{F}\leq\eta$
and $K\leq(9/\eta)^{(2n+1)}$. 

For an arbitrary $A\in\Gcal$, there exists $1\leq j_{0}\leq K$ with
$\|A-A_{j_{0}}\|_{F}\leq\eta$. Thus, 
\begin{align*}
|\trace(A'\varepsilon u')| & \leq|\trace(A_{j_{0}}'\varepsilon u')|+|\trace((A-A_{j_{0}})'\varepsilon u')|\\
 & \leq\max_{1\leq j\leq K}|\trace(A_{j}'\varepsilon u')|+\|A-A_{j_{0}}\|_{F}\cdot\left|\trace\left(\left(\frac{A-A_{j_{0}}}{\|A-A_{j_{0}}\|_{F}}\right)'\varepsilon u'\right)\right|\\
 & \overset{\text{(i)}}{\leq}\max_{1\leq j\leq K}|\trace(A_{j}'\varepsilon u')|+2\eta Z,
\end{align*}
where (i) follows by the fact that $\|A-A_{j_{0}}\|_{F}^{-1}(A-A_{j_{0}})$
is a matrix of rank at most two and with Frobenius norm 1 (this means
that there exist two matrices $G_{1},G_{2}$ of rank 1 such that $\|A-A_{j_{0}}\|_{F}^{-1}(A-A_{j_{0}})=G_{1}+G_{2}$
and $\|G_{1}\|_{F},\|G_{2}\|_{F}\leq1$). Since the left-hand size
does not depend on $A$, we can take a supreme and obtain 
\[
Z\leq\max_{1\leq j\leq K}|\trace(A_{j}'\varepsilon u')|+2\eta Z.
\]

Since $\eta=1/3$, we have that $Z\leq3\max_{1\leq j\leq K}|\trace(A_{j}'\varepsilon u')|$
and $K\leq27^{2n+1}$. 

\textbf{Step 2:} derive an exponential bound.

For any $1\leq j\leq K$, there exists $a_{j},b_{j}\in\RR^{n}$ such
that $A_{j}=a_{j}b_{j}'$ and $\|a_{j}\|_{2}=\|b_{j}\|_{2}=1$. Therefore,
\[
\trace(A_{j}'\varepsilon u')=\trace(b_{j}a_{j}'\varepsilon u')=a_{j}'\varepsilon u'b_{j}.
\]

We recall that $\varepsilon$ and $u$ are independent. For any $x\in(0,1/2]$,
we have that 
\begin{multline*}
\EE\exp\left(xa_{j}'\varepsilon u'b_{j}\right)=\EE\left\{ \EE\left[\left(xa_{j}'\varepsilon u'b_{j}\right)\mid\varepsilon\right]\right\} \\
\overset{\text{(i)}}{=}\EE\left\{ \exp\left(\frac{1}{2}x^{2}\|\varepsilon'a_{j}\|_{2}^{2}\right)\right\} \overset{\text{(ii)}}{=}\left(1-x^{2}\right)^{-T/2}\overset{\text{(iii)}}{\leq}\exp(Tx^{2}),
\end{multline*}
where (i) follows by the moment generating function of multivariate
normal distributions and the fact that $u'b_{j}\sim N(0,I_{T})$,
(ii) follows by the fact that $\|\varepsilon'a_{j}\|_{2}^{2}$ has
a chi-squared distribution with $T$ degrees of freedom and its moment
generating function and finally (iii) follows by the elementary inequality
$(1-a)^{-q}\leq\exp(2qa)$ for any $a\in(0,0.6)$ and $q>0$. 

Now we can derive an exponential bound. For any $z\in(0,4T)$, we
take $x=z/2T$ and obtain
\begin{align*}
\PP\left(a_{j}'\varepsilon u'b_{j}>z\right) & =\PP\left(\exp\left(xa_{j}'\varepsilon u'b_{j}\right)>\exp(xz)\right)\\
 & \leq\exp(-xz)\EE\exp\left(xa_{j}'\varepsilon u'b_{j}\right)\\
 & \leq\exp(Tx^{2}-xz)=\exp\left(-\frac{z^{2}}{4T}\right).
\end{align*}

An analogous argument would establish the same bound for $\PP(-a_{j}'\varepsilon u'b_{j}>z)$.
Hence, we have showed that for any $z\in(0,4T)$, 
\[
\PP\left(\left|a_{j}'\varepsilon u'b_{j}\right|>z\right)\leq2\exp\left(-\frac{z^{2}}{4T}\right).
\]

By the union bound, we have that for $z=3\sqrt{(\log27)(2n+1)T}$
(notice that this choice is within $(0,4T)$ due to $T\geq n$), 
\[
\PP\left(\max_{1\leq j\leq K}|\trace(A_{j}'\varepsilon u')|>z\right)\leq2K\exp\left(-\frac{z^{2}}{4T}\right)\leq2\exp\left((2n+1)\log27-z^{2}/(4T)\right)=o(1).
\]

Since $Z\leq3\max_{1\leq j\leq K}|\trace(A_{j}'\varepsilon u')|$
and $9\sqrt{3\log(27)}<19$, the proof is complete. 
\end{proof}

\subsubsection{Proof of Theorem \ref{thm: upper bnd panel}}
\begin{proof}[\textbf{Proof of Theorem \ref{thm: upper bnd panel}}]
We first observe that 
\begin{equation}
\hbeta=\frac{n-r_{1}}{n-\hr}\cdot\frac{J_{1}+J_{2}+J_{3}+J_{4}}{\trace(X'\Pi_{\halpha}X)},\label{eq: thm upper bnd panel 3}
\end{equation}
where $J_{1}=\trace(F\Lambda'\Pi_{\hLambda}\Pi_{\halpha}\alpha g')$,
$J_{2}=\trace(V'\Pi_{\hLambda}\Pi_{\halpha}\alpha g')$, $J_{3}=\trace(F\Lambda'\Pi_{\hLambda}\Pi_{\halpha}u)$
and $J_{4}=\trace(V'\Pi_{\hLambda}\Pi_{\halpha}u)$. By Lemmas \ref{lem: bound 1 PCA}
and \ref{lem: bound 2 PCA}, we have 
\[
|J_{1}|\leq\|\Pi_{\hLambda}\Lambda F'\|_{F}\cdot\|\Pi_{\halpha}\alpha g'\|_{F}=O_{P}(\sqrt{nT}).
\]

Due to the normalization $g'g/T=I_{r_{*}}$ and $\Gamma'\Gamma/T=I_{k_{*}}$.
Lemmas \ref{lem: bound 1 PCA} and \ref{lem: bound 2 PCA} imply 
\[
\max\{\|\Pi_{\halpha}\alpha\|_{F},\|\Pi_{\hLambda}\Lambda\|_{F}\}=O_{P}((n/T)^{1/4}),
\]
since $\|\Pi_{\halpha}\alpha g'\|_{F}^{2}=\trace(\alpha'\Pi_{\halpha}\alpha g'g)=T\|\Pi_{\halpha}\alpha\|_{F}^{2}$
and similarly $\|\Pi_{\hLambda}\Lambda F'\|_{F}^{2}=T\|\Pi_{\hLambda}\Lambda\|_{F}^{2}$.
For $J_{2}$, we notice that 
\begin{align*}
|J_{2} & |=|\trace(g'V'\Pi_{\hLambda}\Pi_{\halpha}\alpha)|\\
 & \leq\|Vg\|_{F}\|\Pi_{\hLambda}\Pi_{\halpha}\alpha\|_{F}\\
 & \leq\|Vg\|_{F}\|\Pi_{\halpha}\alpha\|_{F}\\
 & \overset{\texti}{=}O_{P}(\sqrt{nT})\cdot\|\Pi_{\halpha}\alpha\|_{F}=O_{P}(\sqrt{nT}\cdot(nT^{-1})^{1/4})\overset{\textii}{=}O_{P}(\sqrt{nT}),
\end{align*}
where (i) follows by a similar argument as in Lemma \ref{lem: bound 0 PCA}
(with $u$ replaced by $V$) and (ii) follows by $T\geq n$. Similarly,
we can show that $J_{3}=O_{P}(\sqrt{nT})$. 

Finally, for $J_{4}$, we notice that 
\begin{align*}
J_{4} & =\trace(V'[I-P_{\hLambda}-P_{\halpha}+P_{\hLambda}P_{\alpha}]u)\\
 & =\trace(V'u)-\trace(uV'P_{\hLambda})-\trace(uV'P_{\halpha})+\trace(uV'P_{\hLambda}P_{\halpha}).
\end{align*}

Since $V=u\beta+\varepsilon$, we have 
\begin{align*}
\trace(uV'P_{\hLambda}) & =\beta\trace(uu'P_{\hLambda})+\trace(u\varepsilon'P_{\hLambda})\\
 & \overset{\texti}{=}\beta\trace(uu'P_{\hLambda})+O_{P}(\|P_{\hLambda}\|_{*}\cdot\|u\varepsilon'\|)\\
 & \overset{\textii}{=}\beta\trace(uu'P_{\hLambda})+O_{P}(\sqrt{nT})\\
 & =\beta\trace(T\sigma_{u}^{2}I_{n}P_{\hLambda})+\beta\trace((uu'-T\sigma_{u}^{2}I_{n})P_{\hLambda})+O_{P}(\sqrt{nT})\\
 & \overset{\textiii}{=}\beta\trace(T\sigma_{u}^{2}I_{n}P_{\hLambda})+\beta O_{P}(\|uu'-T\sigma_{u}^{2}I_{n}\|\cdot\|P_{\hLambda}\|_{*})+O_{P}(\sqrt{nT})\\
 & \overset{\textiv}{=}\beta\trace(T\sigma_{u}^{2}I_{n}P_{\hLambda})+O_{P}(\sqrt{nT}),
\end{align*}
where (i) follows by trace duality, (ii) follows by Lemma \ref{lem: rand proj spectral norm}
and $\|P_{\hLambda}\|_{*}=k$, (iii) follows by trace duality and
(iv) follows by $\|uu'-T\sigma_{u}^{2}I_{n}\|=O_{P}(\sqrt{nT})$ (due
to Lemma \ref{lem: bound 0 PCA}). Similar bounds can be obtained
for $\trace(uV'P_{\halpha})$ and $\trace(uV'P_{\hLambda}P_{\halpha})$.
Clearly, $\trace(V'u)=\trace(u'u)\beta+\trace(\varepsilon'u)=nT\sigma_{u}^{2}\beta+O_{P}(\sqrt{nT})$.
Since $\hr=r_{1}+k-\trace(P_{\hLambda}P_{\halpha})$, it follows that
\begin{align*}
J_{4} & =T\sigma_{u}^{2}\beta\left(n-\trace(P_{\hLambda})-\trace(P_{\halpha})+\trace(P_{\hLambda}P_{\halpha})\right)+O_{P}(\sqrt{nT})\\
 & =T\sigma_{u}^{2}\beta\left(n-r_{1}-k+\trace(P_{\hLambda}P_{\halpha})\right)+O_{P}(\sqrt{nT})\\
 & =T\sigma_{u}^{2}\beta\left(n-\hr\right)+O_{P}(\sqrt{nT}).
\end{align*}

The bounds for $J_{1}$, $J_{2}$, $J_{3}$ and $J_{4}$ as well as
(\ref{eq: thm upper bnd panel 3}) imply 
\[
\hbeta=\frac{n-r_{1}}{n-\hr}\cdot\frac{T\sigma_{u}^{2}\beta\left(n-\hr\right)+O_{P}(\sqrt{nT})}{\trace(X'\Pi_{\halpha}X)}.
\]

By Lemma \ref{lem: bound 1 PCA}, we have $\trace(X'\Pi_{\halpha}X)=T(n-r_{1})\sigma_{u}^{2}+O_{P}(\sqrt{nT})$.
Therefore, 
\begin{align*}
\hbeta & =\frac{n-r_{1}}{n-\hr}\cdot\frac{T\sigma_{u}^{2}\beta\left(n-\hr\right)+O_{P}(\sqrt{nT})}{T(n-r_{1})\sigma_{u}^{2}+O_{P}(\sqrt{nT})}\\
 & =\frac{\beta\sigma_{u}^{2}+O_{P}\left(\frac{\sqrt{n/T}}{n-\hr}\right)}{\sigma_{u}^{2}+O_{P}\left(\frac{\sqrt{n/T}}{n-r_{1}}\right)}\\
 & =\frac{\beta\sigma_{u}^{2}+O_{P}\left((nT)^{-1/2}\right)}{\sigma_{u}^{2}+O_{P}\left((nT)^{-1/2}\right)}=\beta+O_{P}((nT)^{-1/2}).
\end{align*}

The proof is complete. 
\end{proof}

\subsection{Proof of Theorem \ref{thm: panel standard error} and Corollary \ref{cor: under-coverage panel data}}

We first prove the following result. 
\begin{lem}
\label{lem: KL div}For $j\in\{1,2\}$, let $\PP_{j}$ denote the
Gaussian distribution $N(\mu_{j},\Sigma_{j})$ in $\RR^{k}$. Then
the Kullback--Leibler divergence is
\begin{align*}
\KL(\PP_{2},\PP_{1}) & :=\int\left(\log\frac{d\PP_{2}}{d\PP_{1}}\right)d\PP_{2}\\
 & =\frac{1}{2}\left[(\mu_{1}-\mu_{2})'\Sigma_{1}^{-1}(\mu_{1}-\mu_{2})+\trace(\Sigma_{1}^{-1}\Sigma_{2})-k+\log\frac{\det\Sigma_{1}}{\det\Sigma_{2}}\right].
\end{align*}
\end{lem}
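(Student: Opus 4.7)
The plan is a direct computation from the Gaussian density formula, with one small algebraic trick. First, I would write down the log-density ratio using $p_j(x)=(2\pi)^{-k/2}(\det\Sigma_j)^{-1/2}\exp\bigl(-\tfrac{1}{2}(x-\mu_j)'\Sigma_j^{-1}(x-\mu_j)\bigr)$, which gives
\[
\log\frac{d\PP_{2}}{d\PP_{1}}(x)=\frac{1}{2}\log\frac{\det\Sigma_{1}}{\det\Sigma_{2}}+\frac{1}{2}(x-\mu_{1})'\Sigma_{1}^{-1}(x-\mu_{1})-\frac{1}{2}(x-\mu_{2})'\Sigma_{2}^{-1}(x-\mu_{2}).
\]
The log-determinant term is deterministic and will pass through the expectation unchanged.

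Next I would integrate this against $\PP_{2}$. For the second quadratic form, I would use the standard identity that if $X\sim N(\mu_{2},\Sigma_{2})$, then $\EE(X-\mu_{2})'A(X-\mu_{2})=\trace(A\Sigma_{2})$ for any symmetric $A$. Applied with $A=\Sigma_{2}^{-1}$, this contributes $\trace(I_{k})=k$, matching the $-k$ in the claimed formula.

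The only step that requires a small manipulation is the first quadratic form, because it is centered at $\mu_{1}$ rather than at $\mu_{2}$. Here I would write $X-\mu_{1}=(X-\mu_{2})+(\mu_{2}-\mu_{1})$ and expand:
\[
(X-\mu_{1})'\Sigma_{1}^{-1}(X-\mu_{1})=(X-\mu_{2})'\Sigma_{1}^{-1}(X-\mu_{2})+2(\mu_{2}-\mu_{1})'\Sigma_{1}^{-1}(X-\mu_{2})+(\mu_{1}-\mu_{2})'\Sigma_{1}^{-1}(\mu_{1}-\mu_{2}).
\]
Under $\PP_{2}$, the middle cross-term has mean zero since $\EE(X-\mu_{2})=0$, the first term has expectation $\trace(\Sigma_{1}^{-1}\Sigma_{2})$ by the same identity as before, and the last term is deterministic.

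Collecting everything with the factor of $\tfrac{1}{2}$ yields exactly
\[
\KL(\PP_{2},\PP_{1})=\frac{1}{2}\Bigl[(\mu_{1}-\mu_{2})'\Sigma_{1}^{-1}(\mu_{1}-\mu_{2})+\trace(\Sigma_{1}^{-1}\Sigma_{2})-k+\log\frac{\det\Sigma_{1}}{\det\Sigma_{2}}\Bigr].
\]
There is no genuine obstacle here; the whole argument is bookkeeping around the centering trick and the trace identity for Gaussian quadratic forms, and implicitly assumes $\Sigma_{1}$ is invertible (otherwise the KL divergence is infinite when the supports differ, and the formula should be interpreted accordingly).
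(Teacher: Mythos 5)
Your proof is correct and follows essentially the same route as the paper's: both reduce the computation to the expectation of the log-density ratio, pass the log-determinant through unchanged, and handle the two quadratic forms via the Gaussian trace identity. The only cosmetic difference is that you recenter $X-\mu_1 = (X-\mu_2) + (\mu_2-\mu_1)$ before applying the trace identity, whereas the paper first rewrites $\EE(X-\mu_1)'\Sigma_1^{-1}(X-\mu_1)$ as $\trace[\Sigma_1^{-1}\EE(X-\mu_1)(X-\mu_1)']$ and then expands the outer product; these are the same bookkeeping in a different order.
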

\begin{proof}
Let $X$ be a random vector with distribution $\PP_{2}$. We notice
that
\begin{align*}
\KL(\PP_{2},\PP_{1}) & =\int\left(\log\frac{d\PP_{2}}{d\PP_{1}}\right)d\PP_{2}\\
 & =\EE\left[\left(\log\frac{d\PP_{2}}{d\PP_{1}}\right)(X)\right]\\
 & =\EE\left\{ \log\frac{(2\pi)^{-k/2}[\det\Sigma_{2}]^{-1/2}\exp\left(-(X-\mu_{2})'\Sigma_{2}^{-1}(X-\mu_{2})/2\right)}{(2\pi)^{-k/2}[\det\Sigma_{1}]^{-1/2}\exp\left(-(X-\mu_{1})'\Sigma_{1}^{-1}(X-\mu_{1})/2\right)}\right\} \\
 & =-\frac{1}{2}\log\frac{\det\Sigma_{2}}{\det\Sigma_{1}}+\frac{1}{2}\EE\left[(X-\mu_{1})'\Sigma_{1}^{-1}(X-\mu_{1})-(X-\mu_{2})'\Sigma_{2}^{-1}(X-\mu_{2})\right]\\
 & =-\frac{1}{2}\log\frac{\det\Sigma_{2}}{\det\Sigma_{1}}+\frac{1}{2}\left\{ \trace\left[\Sigma_{1}^{-1}\EE(X-\mu_{1})(X-\mu_{1})'\right]-\trace\left(\Sigma_{2}^{-1}\EE(X-\mu_{2})(X-\mu_{2})'\right)\right\} \\
 & =-\frac{1}{2}\log\frac{\det\Sigma_{2}}{\det\Sigma_{1}}+\frac{1}{2}\left\{ \trace\left[\Sigma_{1}^{-1}(\Sigma_{2}+\mu_{2}\mu_{2}'-\mu_{2}\mu_{1}'-\mu_{1}\mu_{2}'+\mu_{1}\mu_{1}')\right]-\trace\left(\Sigma_{2}^{-1}\Sigma_{2}\right)\right\} \\
 & =-\frac{1}{2}\log\frac{\det\Sigma_{2}}{\det\Sigma_{1}}+\frac{1}{2}\left\{ \trace\left[\Sigma_{1}^{-1}(\mu_{1}-\mu_{2})(\mu_{1}-\mu_{2})'\right]+\trace(\Sigma_{1}^{-1}\Sigma_{2})-k\right\} \\
 & =-\frac{1}{2}\log\frac{\det\Sigma_{2}}{\det\Sigma_{1}}+\frac{1}{2}\left\{ (\mu_{1}-\mu_{2})'\Sigma_{1}^{-1}(\mu_{1}-\mu_{2})+\trace(\Sigma_{1}^{-1}\Sigma_{2})-k\right\} .
\end{align*}

The proof is complete. 
\end{proof}

\begin{proof}[\textbf{Proof of Theorem \ref{thm: panel standard error}}]
Fix a point $\theta_{1}=(M_{1},D_{1},1,1,0)\in\Theta^{(1)}$. Define
$\theta_{2}=(M_{1}-\delta D_{1},D_{1},1,1,\delta)$, where $\delta=c(nT)^{-1/2}$.
Clearly, $\theta_{2}\in\Theta^{(2)}$.

We write the data as $\zb=(\yb',\xb')'\in\RR^{2nT}$ with $\yb=\vector(Y)$
and $\xb=\vector(X)$. Notice that for $j\in\{1,2\}$, $\zb$ follows
$N(\mu_{j},\Sigma_{j})$ under $\PP_{\theta_{j}}$, where 
\[
\mu_{1}=\mu_{2}=\begin{pmatrix}\vector(M_{1})\\
\vector(D_{1})
\end{pmatrix},\quad\Sigma_{1}=I_{2nT}\quad\text{and}\quad\Sigma_{2}=\begin{pmatrix}\delta^{2}+1 & \delta\\
\delta & 1
\end{pmatrix}\otimes I_{nT}.
\]

Using Lemma \ref{lem: KL div}, we compute the Kullback--Leibler
divergence:
\[
\KL(\PP_{\theta_{2}},\PP_{\theta_{1}})=\frac{1}{2}\left(\trace(\Sigma_{1}^{-1}\Sigma_{2})-2nT+\log\frac{\det\Sigma_{1}}{\det\Sigma_{2}}\right)=\frac{1}{2}\delta^{2}nT=\frac{1}{2}c^{2}.
\]

Let $\psi(X,Y)=\oneb\{\delta\notin CI(X,Y)\}$. Then 
\[
\EE_{\theta_{1}}\psi(X,Y)-\EE_{\theta_{2}}\psi(X,Y)=\EE_{\theta_{2}}\psi\left(\frac{d\PP_{\theta_{1}}}{d\PP_{\theta_{2}}}-1\right)\leq\EE_{\theta_{2}}\left|\frac{d\PP_{\theta_{1}}}{d\PP_{\theta_{2}}}-1\right|\overset{\texti}{\leq}\sqrt{2\KL(\PP_{\theta_{2}},\PP_{\theta_{1}})}\leq c,
\]
where (i) follows by the first Pinsker's inequality (Lemma 2.5 of
\citet{tsybakov2009introduction}). Since $\EE_{\theta_{2}}\psi(X,Y)\leq\alpha$,
we have that $\EE_{\theta_{1}}\psi(X,Y)\leq\alpha+c$. This means
\[
\PP_{\theta_{1}}\left(\left\{ l(X,Y)\leq\delta\leq u(X,Y)\right\} \right)=1-\EE_{\theta_{1}}\psi(X,Y)\geq1-(\alpha+c).
\]

Notice that 
\[
\PP_{\theta_{1}}\left(l(X,Y)\leq0\leq u(X,Y)\right)\geq1-\alpha.
\]

Therefore, 
\begin{align}
 & \PP_{\theta_{1}}\left(u(X,Y)-l(X,Y)\geq\delta\right)\nonumber \\
 & \geq\PP_{\theta_{1}}\left(\left\{ l(X,Y)\leq\delta\leq u(X,Y)\right\} \bigcap\left\{ l(X,Y)\leq0\leq u(X,Y)\right\} \right)\geq1-2\alpha-c.\label{eq: thm panel std err 4}
\end{align}

This proves the first claim. To obtain the second claim, we observe
that
\begin{align*}
\EE_{\theta_{1}}|CI(X,Y)| & =\int_{0}^{\infty}\PP_{\theta_{1}}(|CI(X,Y)|>z)dz\\
 & \geq\int_{0}^{(1-2\alpha)/\sqrt{nT}}\PP_{\theta_{1}}(|CI(X,Y)|>z)dz\\
 & \overset{\texti}{\geq}\int_{0}^{(1-2\alpha)/\sqrt{nT}}\left(1-2\alpha-\sqrt{nT}z\right)dz\\
 & =(1-2\alpha)^{2}/(2\sqrt{nT}),
\end{align*}
where (i) follows by (\ref{eq: thm panel std err 4}). The proof is
complete. 
\end{proof}

\begin{proof}[\textbf{Proof of Corollary \ref{cor: under-coverage panel data}}]
Define $\alpha_{n,T}=1-\inf_{\theta\in\Theta^{(2)}}\PP_{\theta}\left(\beta\in CI(X,Y)\right)$.
We consider two cases: (1) $\limsup_{n,T}\alpha_{n,T}\geq1/2$ and
(2) $\limsup_{n,T}\alpha_{n,T}<1/2$. In the first case, the desired
result follows since $\liminf_{n,T\rightarrow\infty}\inf_{\theta\in\Theta^{(2)}}\PP_{\theta}\left(\beta\in CI(X,Y)\right)=1-\limsup_{n,T\rightarrow\infty}\alpha_{n,T}$.
Therefore, we only need to consider the case with $\limsup_{n,T}\alpha_{n,T}<1/2$. 

Since $\limsup_{n,T}\alpha_{n,T}<1/2$, we have that for large enough
$(n,T)$, $\alpha_{n,T}<1/2$. Thus, we can apply Theorem \ref{thm: panel standard error}
and obtain that for any $c\in(0,4)$, we have 
\[
\sup_{\theta\in\Theta^{(1)}}\PP_{\theta}\left(|CI(X,Y)|\geq c(nT)^{-1/2}\right)\geq1-2\alpha_{n,T}-c.
\]

Fix an arbitrary $\eta\in(0,0.01)$ and take $c=3.92(1+2\eta)/\sqrt{1+\kappa_{2}^{2}}\in(0,4)$.
It follows that 
\begin{align*}
 & \sup_{\theta\in\Theta^{(1)}}\PP_{\theta}\left(|CI(X,Y)|\geq3.92(nT)^{-1/2}(1+\kappa_{2}^{2})^{-1/2}(1+2\eta)\right)\\
 & \geq1-2\alpha_{n,T}-3.92(1+\kappa_{2}^{2})^{-1/2}(1+2\eta).
\end{align*}

On the other hand, by assumption, we have that for large $(n,T)$,
\[
\sup_{\theta\in\Theta^{(1)}}\PP_{\theta}\left(|CI(X,Y)|>3.92(nT)^{-1/2}(1+\kappa_{2}^{2})^{-1/2}(1+\eta)\right)\leq\eta.
\]

Combining the above two displays, we obtain that for large $(n,T)$,
\[
1-2\alpha_{n,T}-3.92(1+\kappa_{2}^{2})^{-1/2}(1+2\eta)\leq\eta.
\]

Rearranging the terms, we obtain that for large $(n,T)$, 
\[
\alpha_{n,T}\geq\frac{1}{2}\left(1-\eta-3.92(1+\kappa_{2}^{2})^{-1/2}(1+2\eta)\right).
\]

Taking the limsup and using the fact that $\eta\in(0,0.01)$ is arbitrary,
we have that 
\[
\limsup_{n,T\rightarrow\infty}\alpha_{n,T}\geq\frac{1}{2}-1.96(1+\kappa_{2}^{2})^{-1/2}.
\]

The proof is complete since $\liminf_{n,T\rightarrow\infty}\inf_{\theta\in\Theta^{(2)}}\PP_{\theta}\left(\beta\in CI(X,Y)\right)=1-\limsup_{n,T\rightarrow\infty}\alpha_{n,T}$.
\end{proof}
%

\bibliographystyle{apalike}
\bibliography{SC_biblio}

\end{document}